\newtheorem{thmlit}{Theorem}
\crefname{equation}{}{}
\crefname{figure}{{\sc Figure}}{{\sc Figure}}
\crefname{subsection}{Subsection}{Subsections}
\newtheorem*{rep@theorem}{\rep@title}
\newcommand{\newreptheorem}[2]{%
\newenvironment{rep#1}[1]{%
 \def\rep@title{#2 \ref{##1}}%
 \begin{rep@theorem}}%
 {\end{rep@theorem}}}
\newtheorem{theorem}{Theorem}[section]
\newtheorem{remark}[theorem]{Remark}
\newtheorem{corollary}[theorem]{Corollary}
\newtheorem{lemma}[theorem]{Lemma}
\newtheorem{question}[theorem]{Question}
\newtheorem*{definition*}{Definition}
\newtheorem{construction}[theorem]{Construction}
\def\F{\mathcal{F}}
\def\Fbb{\mathbb{F}}
\def\G{\mathcal{G}}
\def \F{{\mathbb F}}
\def\T{\mathcal{T}}
\def\xx{\mathbf{x}}
\def\l({\left(}
\def\r){\right)}
\def\lv{\left\vert}
\def\rv{\right\vert}
\def\lV{\left\Vert}
\def\rV{\right\Vert}
\def\lo{\left\{}
\def\ro{\right\}}
\def\lu{\left[}
\def\ru{\right]}
\def\lf{\left\lfloor}
\def\rf{\right\rfloor}
\def\rar{\rightarrow}
\def\Tca{\mathcal{T}}
\def\bfx{\mathbf{x}}
\def\yy{\mathbf{y}}
\def\oo{\mathbf{0}}
\begin{document}

\title{Restricted projections in positive characteristic via Fourier extension and restriction estimates}

\author{%
  \begin{tabular}{c}
    \makebox[\linewidth][c]{%
      Le Quang Ham~~~~~~~~~~~~Do Trong Hoang~~~~~~~~~~~~Le Quang Hung%
    }\\[1ex]
    \makebox[\linewidth][c]{%
      Doowon Koh~~~~~~~~~~~~Thang Pham%
    }
  \end{tabular}
}
\date{}
\maketitle

\begin{abstract}
Let $d\ge3$ and $\mathbb{F}_q^{\,d}$ be the $d$-dimensional vector space over a finite field of order $q$, where $q$ is an odd prime power. Let $X_\pi$ be the set of lines through the origin intersecting the slice $\pi\cap S^{d-1}$, where $\pi=\{x_d=\lambda\}$ and $S^{d-1}=\{x:\|x\|=1\}$. For $E\subset\mathbb{F}_q^{\,d}$ and $N\ge1$, we study the exceptional sets
\[
\T_1(X_\pi,E,N)=\bigl\{V\in X_\pi:\ |\pi_V(E)|\le N\bigr\},\qquad
\T_2(X_\pi,E,N)=\bigl\{V\in X_\pi:\ |\pi_{V^\perp}(E)|\le N\bigr\},
\]
with their respective natural ranges of $N$. 
Using discrete Fourier analysis together with restriction/extension estimates for cone and sphere-type quadrics over finite fields, 
we obtain sharp upper bounds (up to constant factors) for $\lvert \T_1\rvert$ and $\lvert \T_2\rvert$, with separate analyses for the cases $\lambda \in \{0, \pm 1\}$. The bounds exhibit arithmetic-geometric dichotomies absent in the full Grassmannian: the quadratic character of $\lambda^{2}-1$ and the parity of $d$ determine the size of the exceptional sets. As an application, when $|E|\ge q$, there exists a positive proportion of elements $\mathbf{y}\in \pi\cap S^{d-1}$ such that the pinned dot-product sets $\{\mathbf{y}\cdot \mathbf{x}\colon \mathbf{x}\in E\}$ have cardinality $\Omega(q)$. We further study analogous families arising from the spheres of radii $0$ and $-1$, and, by combining the results, recover the known estimates for projections over the full Grassmannian, complementing a result of Chen (2018).
\end{abstract}

{\bf Keywords: Projection theory, exceptional set estimate, finite fields}

\textbf{2020 Mathematics Subject Classification:} Primary 52C10, 	52C35, 11T23, 51E30

\tableofcontents
\section{Introduction}
For $d\ge 3$, let $\mathbb F_q^d$ be a $d$-dimensional vector space over a finite field with $q$ elements, where $q$ is an odd prime power. For $0<m<d$, the Grassmannian $G(d,m)$ in $\mathbb F_q^d$ is defined as the set of linear subspaces $V$ of $\mathbb F_q^d$ such that $\dim (V)=m.$ For $V\in G(d,m)$ and $E\subset \mathbb F_q^d$, the projection of $E$ onto $V$, denoted by $\pi_V(E)$, is defined by 
$$\pi_V(E) = \{\mathbf{x} +V^{\perp}: (\xx+V^{\perp})\cap E \ne \emptyset, \mathbf{x}\in \mathbb F_q^d\},$$
where $V^{\perp}:=\{\mathbf{x}\in \mathbb{F}_q^d\colon \mathbf{x}\cdot \mathbf{y}=0, ~\forall \mathbf{y}\in V\}$.

For $V\in G(d, m)$ and $E\subset \mathbb{F}_q^d$, it is clear that $|\pi_V(E)|\le \min \{q^m, |E|\}$ since cosets are disjoint. Therefore, given $X\subset G(d, m)$ and $E\subset \mathbb{F}_q^d$, 
it is natural to consider the following two questions. 

\begin{question}\label{qs1}
Given $X\subset G(d, m)$ and $E\subset \mathbb{F}_q^d$ with $|E|> q^m$, what conditions on $X$ and $E$ do we need so that there exists a positive proportion of elements $V\in X$ such that $|\pi_V(E)|\gg q^m?$    
\end{question}

\begin{question}\label{qs2}
Given $X\subset G(d, m)$ and $E\subset \mathbb{F}_q^d$ with $|E|< q^m$, what conditions on $X$ and $E$ do we need so that there exists a positive proportion of elements $V\in X$ such that $|\pi_V(E)|\gg |E|?$   
\end{question}

For $X\subset G(d, m)$ and $E\subset \mathbb F_q^d,$ we define
\begin{align}
 \label{L1}  \mathcal{T}_1(X, E, N)&:=    \left\lbrace V\in X: \#\{\mathbf{x} +V^{\perp}: (\xx+V^{\perp})\cap E \ne \emptyset, x\in \mathbb F_q^d\} \le N \right\rbrace\\
 &=\left\lbrace V\in X: |\pi_V(E)| \le N \right\rbrace;\nonumber
 \end{align}
and
\begin{align} \label{L2}  \mathcal{T}_2(X, E, N)&:=\left\lbrace V\in X: \#\{\mathbf{x} +V: (\xx+V)\cap E \ne \emptyset, \xx\in \mathbb F_q^d\} \le N \right\rbrace\\
 &=\left\lbrace V\in X: |\pi_{V^\perp}(E)| \le N \right\rbrace.\nonumber\end{align}
When $X=G(d, m)$, the topic was initially studied by Chen in \cite{Ch18}, Bright and Gan \cite{BG} then refined Chen's argument to obtain improvements. Recent progress can be found in \cite{LPV} by Lund, Pham, and Vinh for $d=2$, and in \cite{F.R.25} by Fraser and Rakhmonov for Salem sets. These results are of independent interest and also form part of a broader program 
aimed at developing a general framework for improving the current bounds in the 
Erd\H{o}s--Falconer distance problem \cite{CEHIK12, IR, MPPRS22}, inspired by the machinery developed in 
\cite{Du3, alex-fal} for the continuous setting.

For geometric intuition, we include the following pictures. 

\begin{figure}[H]
  \centering
  \includegraphics[width=0.6\textwidth]{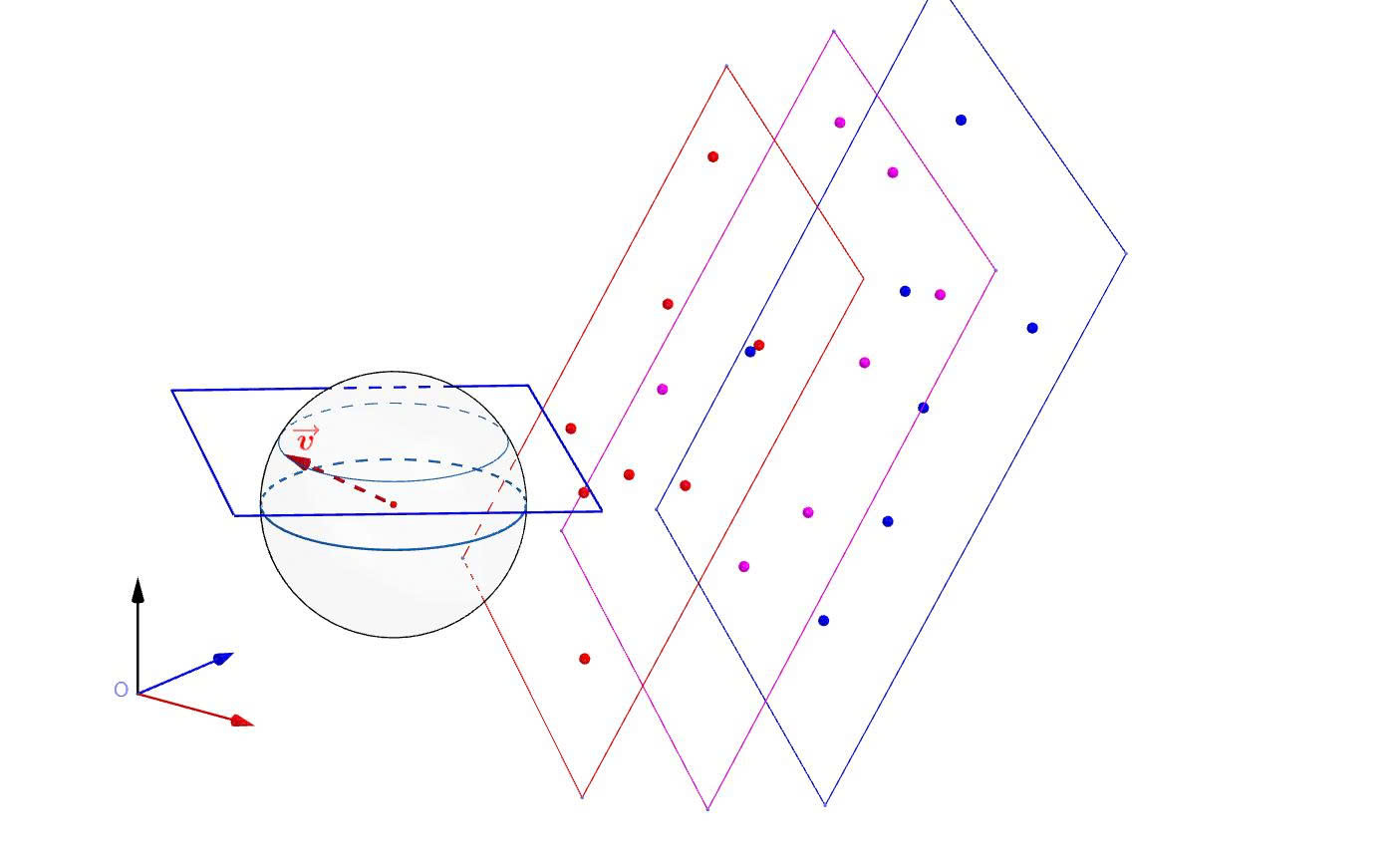}
  \caption{Let $V$ be the line passing through the origin in the direction $\vec{v}$.
  This picture presents the geometric meaning of $\pi_{V}(E)$, i.e. the number of cosets (planes) of $\mathbf{x}+V^\perp$ with non-empty intersection with $E$. All points in each coset will be mapped to a unique point on $V$.}  
  \label{fig:fixed1}
\end{figure}

\begin{figure}[H]
  \centering
  \includegraphics[width=0.6\textwidth]{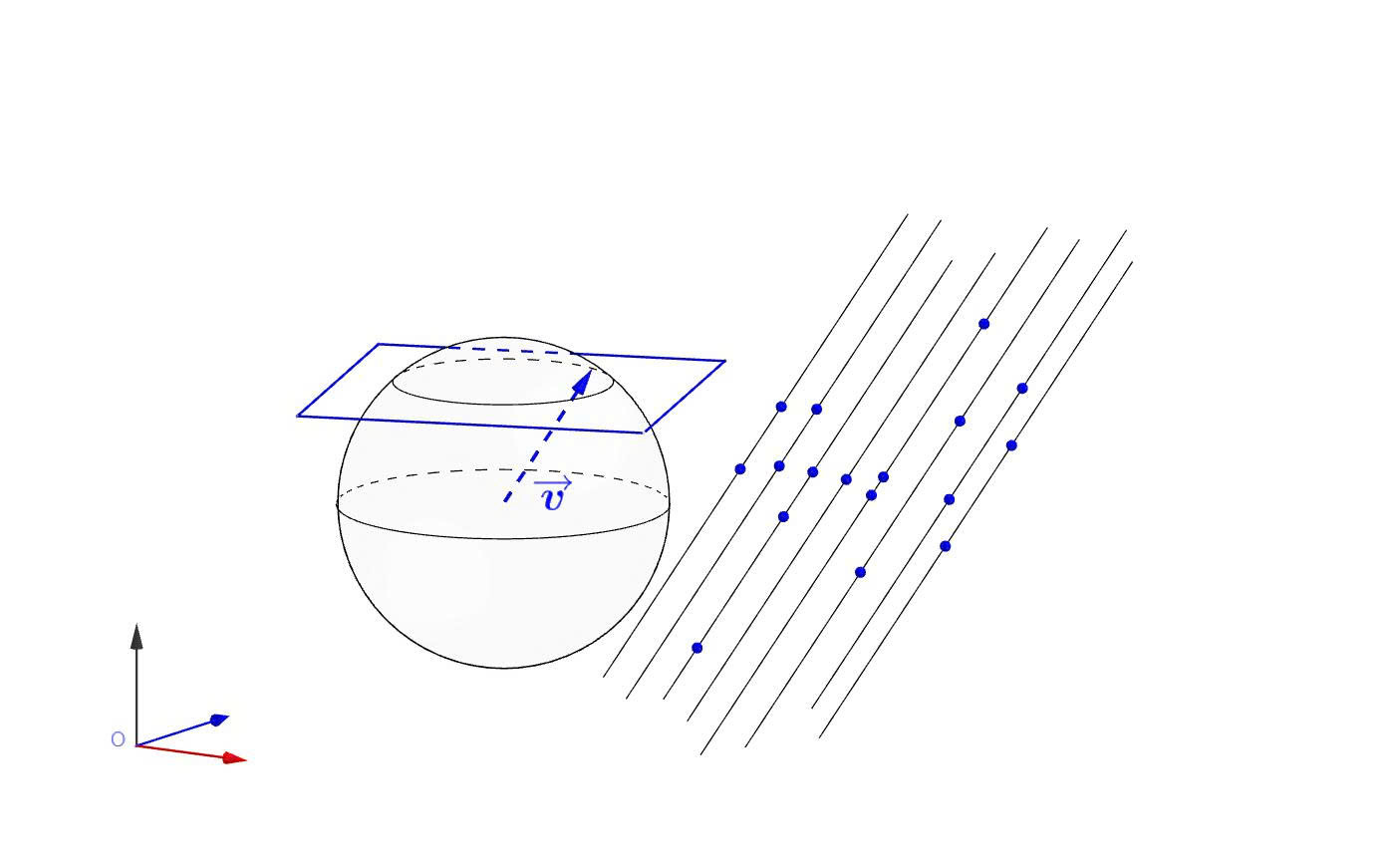}
  \caption{Let $V$ be the line passing through the origin in the direction $\vec{v}$. This picture presents the geometric meaning of $\pi_{V^\perp}(E)$, i.e. the number of cosets (lines) of $\mathbf{x}+\left(V^\perp\right)^\perp=\mathbf{x}+V$ with non-empty intersection with $E$. All points in each coset will be mapped to a unique point on $V^\perp$.} 
  \label{fig:fixed}
\end{figure}

\subsection{Focus of this paper}
For an integer $d\ge 3$, let $S^{d-1}$ be the sphere centered at the origin of radius $1$ in $\mathbb{F}_q^d$, i.e. 
\[S^{d-1}:=\{\xx = \l( x_1 ,x_2 ,\dots ,x_d \r) \in \mathbb{F}_q^d\colon ||\xx||=x_1^2+\cdots+x_d^2=1\}.\]
Let $\lambda \in \mathbb F_q$, let $\pi$ be a hyperplane in $\mathbb{F}_q^d$ defined by the equation $x_d = \lambda$, i.e.
$$\pi = \{\xx = (x_1, x_2, \dots, x_d) \in \mathbb{F}_q^d : x_d = \lambda\}.$$ 
For a set $A\subset \mathbb F_q^d,$  we define $[A]$ as the collection of lines passing through the origin  and  a non-zero element in $A.$ Denote  $X_\pi:=[\pi\cap S^{d-1}]$. It is clear that $X_\pi\subset G(d, 1)$.

The main purpose of this paper is to give complete solutions to Questions \ref{qs1} and \ref{qs2} when $X=X_{\pi}$, namely, we establish sharp upper bounds for $\mathcal{T}_1(X_\pi, E, N)$ and $\mathcal{T}_2(X_\pi, E, N)$. Our approach reveals arithmetic-geometric dichotomies that do not appear in the full Grassmannian: the quadratic character of $\lambda^2-1$, together with the parity of $d$, and congruence conditions on $q$, fundamentally determine the size of exceptional sets, and how they interact to produce distinct projection behaviors. We prove the results are optimal through explicit constructions.

In the Euclidean setting, this topic has a rich history which we do not review here. The most relevant works on this topic are \cite{K.O.V.25, L.25} for $\mathcal{T}_1$ and \cite{1} for $\mathcal{T}_2$. Moreover, the methods required in the two settings differ significantly. While Fourier methods or combinatorial/incidence techniques have been used in \cite{BG, Ch18, F.R.25, LPV}, the novelty of our approach is to employ restriction estimates for the cone and sphere-type varieties within an appropriate discrete Fourier framework. Although there is a series of papers in restriction/extension theory, or recently in sharp Fourier restriction in the literature, for example see  \cite{I.K.L.20, KLP22, KPS21, L.15, L.19, MT04}
and \cite{BCFSST25, GS24, G.I.25}, respectively,  applications to discrete geometry remain limited, primarily appearing in incidence problems \cite{ KP.22, KLP22}
and distance questions \cite{CEHIK12, KPV.21,KPS.21}. Our work demonstrates new applications of these analytic tools to projection questions.

As a consequence, we provide a new structural proof of Chen's theorem in \cite{Ch18} via slice decomposition, demonstrating how global projection phenomena emerge from arithmetically distinct local behaviors. This \textit{local-to-global} principle not only explains why Chen's bounds take their specific form but could inspire similar decomposition strategies for other problems in discrete geometry. 

In the first three following  theorems,  we establish upper  bounds for   $\lv \mathcal{T}_1\l( X_\pi, E,N\r) \rv$, corresponding to   the cases $\lambda\in \mathbb{F}_q\setminus\{0, \pm 1\}, \lambda\in \{\pm 1\}$, and $\lambda=0$, respectively. 

\begin{theorem}\label{thm1.3}
    Let  $E\subset \mathbb F_q^d$,  $\lambda\in \mathbb{F}_q\setminus \{0, \pm 1\}$, and $0<N<q$.
     \begin{itemize}
    \item [(a)] If $d\ge 3$ is odd, then 
 \[ \lv \mathcal{T}_1\l( X_\pi, E,N\r) \rv \le \frac{Nq^{d-1}}{(q-N)|E|} + \frac{Nq^{\frac{d-1}{2}}}{q-N}.\]
\item [(b)] If $d\ge 4$ is even and $\eta\left( (-1)^{\frac{d}{2}} (\lambda^2-1) \right)=-1,$ then
\[\lv \mathcal{T}_1\l( X_\pi, E,N\r) \rv \le \frac{Nq^{d-1}}{(q-N)|E|} + \frac{Nq^{\frac{d-2}{2}}}{q-N} .\]

\item [(c)] If $d\ge 4$ is even and $\eta\left( (-1)^{\frac{d}{2}} (\lambda^2-1) \right)=1,$ then
    \[ \lv \mathcal{T}_1\l(X_\pi, E,N\r) \rv\le \frac{Nq^{d-1}}{(q-N)|E|} + \frac{N \l(q^{\frac{d}{2}} -q^{\frac{d-2}{2}}\r)}{q-N} .\]

\end{itemize}
\end{theorem}


\begin{theorem}\label{p11}  Let $E\subset \mathbb F_q^d$, $\lambda \in \lo \pm 1 \ro$, and $0<N<q$, then we have
\begin{align*}
    \lv \mathcal{T}_1\l( X_\pi, E, N\r) \rv \le \begin{cases}
        \dfrac{Nq^{d-1}}{(q-N)|E|}  + \dfrac{Nq^{\frac{d}{2}}}{q-N} ,& \text{if $d \ge 4$ and even},\\[10pt]
        \dfrac{Nq^{d-1}}{(q-N)|E|}  + \dfrac{Nq^{\frac{d-1}{2}}}{q-N} ,&  \text{if $d \equiv 3 \pmod{4}$ and $q\equiv 3 \pmod 4$},\\[10pt]
        \dfrac{Nq^{d-1}}{(q-N)|E|}  + \dfrac{Nq^{\frac{d+1}{2}}}{q-N} , & \text{otherwise}.
    \end{cases}
\end{align*}
 
\end{theorem}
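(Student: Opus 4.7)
The plan is to translate each projection size $|\pi_V(E)|$ into the size of a pinned dot-product set, then reduce $|\mathcal{T}_1(X_\pi,E,N)|$ to an $L^2$-energy of $\widehat{1_E}$ on a cone in $\mathbb{F}_q^d$. Since $\lambda \in \{\pm 1\}$, each $V \in X_\pi$ has a unique representative $\mathbf{v}_V = (v_1,\dots,v_{d-1},\lambda) \in \pi \cap S^{d-1}$ with $v_1^2+\cdots+v_{d-1}^2 = 1-\lambda^2 = 0$, and since $\mathbf{v}_V\cdot\mathbf{v}_V = 1 \neq 0$ the cosets of $V^\perp = \mathbf{v}_V^\perp$ are parameterized by $\mathbf{v}_V$-inner products, giving $|\pi_V(E)| = |\{\mathbf{v}_V\cdot\mathbf{x} : \mathbf{x}\in E\}|$. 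Writing $r(t,\mathbf{v}) = |\{\mathbf{x} \in E : \mathbf{v}\cdot\mathbf{x}=t\}|$, Cauchy--Schwarz gives $\sum_t r(t,\mathbf{v}_V)^2 \ge |E|^2/N$ for $V \in \mathcal{T}_1$, while additive orthogonality yields $\sum_t r(t,\mathbf{v})^2 - |E|^2/q = q^{-1}\sum_{s\ne 0}|\widehat{1_E}(s\mathbf{v})|^2$. Summing over $V \in \mathcal{T}_1$ and using that $\{s\mathbf{v}_V : V \in X_\pi,\ s \in \mathbb{F}_q^*\}$ bijectively parameterizes the punctured cone $C \subseteq K := \{\mathbf{w} \in \mathbb{F}_q^d : w_1^2+\cdots+w_{d-1}^2=0\}$, I arrive at
\[
|\mathcal{T}_1(X_\pi,E,N)| \;\le\; \frac{N}{(q-N)|E|^2}\sum_{\mathbf{w} \in K}|\widehat{1_E}(\mathbf{w})|^2.
\]

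The central step is to evaluate this cone energy exactly via Gauss sums. I would expand $1_K(\mathbf{w}) = q^{-1}\sum_s\chi(s(w_1^2+\cdots+w_{d-1}^2))$, pull out the $w_d$-sum (producing a factor $q\mathbf{1}[z_d=0]$), and complete the square in each $w_i$ ($i<d$) via $\sum_w\chi(sw^2-wz) = \eta(s)\,g\,\chi(-z^2/(4s))$ with $g = \sum_w\chi(w^2)$, to obtain $\widehat{1_K}(\mathbf{z}) = q\,\widehat{1_{K'}}(\mathbf{z}')\,\mathbf{1}[z_d=0]$ for $K' = \{\mathbf{w}' \in \mathbb{F}_q^{d-1}: \|\mathbf{w}'\|^2=0\}$. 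Plugging this into $\sum_{\mathbf{w} \in K}|\widehat{1_E}|^2 = \sum_\mathbf{z}\widehat{1_K}(\mathbf{z})\,r_{E-E}(\mathbf{z})$ and simplifying produces the clean identity
\[
\sum_{\mathbf{w} \in K}|\widehat{1_E}(\mathbf{w})|^2 \;=\; q^{d-1}|E| + g^{d-1}(qP_1-M) \qquad (d \text{ odd}),
\]
with $M = |\{(\mathbf{x},\mathbf{y})\in E^2 : x_d=y_d\}|$ and $P_1 = |\{(\mathbf{x},\mathbf{y})\in E^2 : x_d=y_d,\ \|\mathbf{x}'-\mathbf{y}'\|^2=0\}|$; for $d$ even the parallel computation gives directly $\sum \le q^{d-1}|E| + q^{d/2}|E|^2$, since $\widehat{1_{K'}}$ vanishes on the nonzero isotropic vectors of $\mathbb{F}_q^{d-1}$ and the remaining contribution is bounded uniformly by $|g^{d-1}g_\eta|=q^{d/2}$.

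The main obstacle lies in the case split for $d$ odd, which is governed by the sign of $g^{d-1} = \eta(-1)^{(d-1)/2}\, q^{(d-1)/2}$. In the generic subcase (sign $+1$, corresponding to $d \equiv 1\pmod 4$ or $d \equiv 3\pmod 4$ with $q \equiv 1\pmod 4$), only the crude bound $qP_1 - M \le qP_1 \le q|E|^2$ is available, yielding $\sum \le q^{d-1}|E| + q^{(d+1)/2}|E|^2$. The key arithmetic observation is that in the special case $d \equiv 3\pmod 4$ and $q \equiv 3\pmod 4$ the sign flips to $-1$, so the fluctuation becomes $q^{(d-1)/2}(M-qP_1)$, and one can invoke the strictly cheaper bound $M - qP_1 \le M \le |E|^2$, saving a full factor of $q$ to obtain $\sum \le q^{d-1}|E| + q^{(d-1)/2}|E|^2$. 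Feeding each of the three resulting $L^2$-energy bounds into the reduction inequality produces the three estimates stated in the theorem, and sharpness in the dominant range is verified by standard examples where $E$ is contained in a small number of hyperplanes.
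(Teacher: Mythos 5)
Your proposal is correct and takes essentially the same route as the paper: your Cauchy--Schwarz/Parseval reduction to the $L^2$ mass of $\widehat{1_E}$ on the degenerate cone $\{\xi_1^2+\cdots+\xi_{d-1}^2=0\}$ is Theorem~\ref{lem1K} specialized to $X_\pi$, and your Gauss-sum evaluation, with the sign of $\mathcal{G}^{d-1}=(\eta(-1))^{(d-1)/2}q^{(d-1)/2}$ dictating the three cases (crude bound $qP_1-M\le q|E|^2$ in general, the favorable $M-qP_1\le |E|^2$ when $d\equiv q\equiv 3\pmod 4$, and $|\mathcal{G}^{d}|=q^{d/2}$ for $d$ even), is exactly the content of Lemma~\ref{Lem1.911}. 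The only cosmetic difference is that you phrase the reduction via pinned dot-product multiplicities rather than cosets of $V^\perp$, which is the same computation.
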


\begin{theorem}\label{thm1.55}
    Let $E\subset \mathbb F_q^d$,  $\lambda=0$, and $0<N<q$. For $\mathbf{x}\in \mathbb{F}_q^{d-1}$, define
    \[
f(\mathbf{x}) := \#\{t \in \mathbb{F}_q : (\mathbf{x},t) \in E\}.
\]
We have 
$$ \lv \mathcal{T}_1\l(X_\pi, E, N\r) \rv \le \frac{2q^{d-1}N}{(q-N)|E|} \cdot \max_{\xx\in \mathbb{F}_q^{d-1}}f(\xx).$$
\end{theorem}

\textbf{Sketch of proofs.}
The proofs of Theorems \ref{thm1.3}, \ref{p11}, and \ref{thm1.55} share a common outline but work with different ambient varieties.
First, by Lemma \ref{Lem3.1KP}, we control $|\mathcal{T}_1(X_\pi,E,N)|$ via a weighted sum of $|\widehat{E}(\xi)|^2$ over the frequency set
\[
K_\pi \;:=\; \bigcup_{V\in X_\pi} (V\setminus\{\mathbf{0}\}) \;\subset\; \F_q^d. ~~(\mbox{union with multiplicity})
\]
For $\lambda\not\in \{0, \pm 1\}$ (Theorem \ref{thm1.3}), this union agrees (up to $\{ \oo\}$) with the cone-type quadric
\[
H_\lambda=\Bigl\{\, \xx\in\F_q^d:\; x_1^2+\cdots+x_{d-1}^2+\tfrac{\lambda^2-1}{\lambda^2}\,x_d^2=0\,\Bigr\},
\]
so we apply the restriction estimates for $H_\lambda$ to bound $\sum_{\xi\in H_\lambda}|\widehat{E}(\xi)|^2$. The final results depend on the arithmetic of $\lambda$, the parity of $d$, and on congruence conditions on $q$. 

The borderline case $\lambda = \pm 1$ (Theorem~\ref{p11}) is degenerate:
the coefficient of $x_d^2$ in the equation of $H_\lambda$ vanishes, so
\[
H_\lambda
= \bigl\{\mathbf{x} \in \mathbb{F}_q^d : x_1^2 + \cdots + x_{d-1}^2 = 0\bigr\}
= \bigl\{x_1^2 + \cdots + x_{d-1}^2 = 0\bigr\} \times \mathbb{F}_q.
\]
In this situation, we no longer use cone-type extension estimates, instead,
we work directly on the quadratic variety
$\{x_1^2 + \cdots + x_{d-1}^2 = 0\}\subset\mathbb{F}_q^{d-1}$ and obtain
the required bounds for
$\sum_{\xi \in K_\pi} |\widehat{E}(\xi)|^2$ by an explicit Fourier-analytic
computation. This yields the estimate stated in Theorem~\ref{p11}.

For $\lambda = 0$ (Theorem~\ref{thm1.55}), we have $\pi = \{x_d = 0\}$. In this case, $K_\pi$ lies in the
hyperplane $\{\xi_d = 0\}$ and is parametrized by directions on $S^{d-2}$.
We then express the weighted sum
\[
\sum_{V \in X_\pi} \sum_{\xi \in V \setminus \{\mathbf{0}\}}
  |\widehat{E}(\xi)|^2
\]
in terms of $\widehat{f}$ on $\mathbb{F}_q^{d-1}$. A direct application of
Plancherel (in dimension $d-1$) then yields the bound stated in
Theorem~\ref{thm1.55}. Using extension estimates for spheres, we also obtain
additional upper bounds for $|\mathcal{T}_1|$ (Theorem~\ref{thm1.55'}), which are of independent interest.

The next theorem is on the size of $\T_2 (X_\pi ,E,N )$.
\begin{theorem}\label{thm:1.5} Let $E\subset \mathbb{F}_q^d$. If $0 <N\le \frac{q^{d-1}}{2}$, then
 \[ |\T_2 (X_\pi ,E,N )|\le \begin{cases}
    \dfrac{4Nq^{d-1}}{|E|}, & \text{if $\lambda =0$},\\[10pt]
    \dfrac{6Nq^{d-2}}{|E|}, & \text{otherwise}.
\end{cases}  \]
\end{theorem}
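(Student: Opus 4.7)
The plan is to adapt the master inequality used for $\mathcal{T}_1$ to the dual setting of $\mathcal{T}_2$, reducing the problem to bounding the $K_\pi$-restricted additive energy of $E$, where $K_\pi=\bigcup_{V\in X_\pi}(V\setminus\{\mathbf{0}\})$. The starting point is a Cauchy-Schwarz argument: for each $V\in\mathcal{T}_2(X_\pi,E,N)$, the multiplicity $m_V(\mathbf{a})=|E\cap(\mathbf{a}+V)|$ satisfies $\sum_{\mathbf{a}}m_V(\mathbf{a})=|E|$, with at most $|\pi_{V^\perp}(E)|\le N$ nonzero entries, so $\sum_{\mathbf{a}}m_V(\mathbf{a})^2\ge|E|^2/N$. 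Expanding $\sum_{\mathbf{a}}m_V(\mathbf{a})^2=|E|+\sum_{z\in V\setminus\{\mathbf{0}\}}r_E(z)$ with $r_E(z)=|\{(\mathbf{x},\mathbf{y})\in E\times E:\mathbf{x}-\mathbf{y}=z\}|$, summing over $V\in\mathcal{T}_2$, and using that each nonzero $z$ belongs to at most one line of $X_\pi$, one arrives at the master inequality
\[
|\mathcal{T}_2(X_\pi,E,N)|\left(\frac{|E|^2}{N}-|E|\right)\;\le\;\sum_{z\in K_\pi}r_E(z).
\]
Combined with the elementary constraint $|\pi_{V^\perp}(E)|\ge|E|/q$ (so $\mathcal{T}_2$ is empty whenever $|E|>qN$), this reduces matters to a good upper bound on the $K_\pi$-energy.

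For $\lambda=0$ I would proceed directly: $K_\pi\subset\{x_d=0\}$ consists of the nonzero $(y',0)$ whose norm $y_1^2+\cdots+y_{d-1}^2$ is a nonzero square, hence $|K_\pi|\le q^{d-1}$, and the pointwise bound $r_E(z)\le|E|$ yields $\sum_{z\in K_\pi}r_E(z)\le q^{d-1}|E|$. Substituting into the master inequality and noting that in the non-trivial regime $|E|\ge 2N$ the factor $|E|-N$ is at least $|E|/2$ gives $|\mathcal{T}_2|\le 4Nq^{d-1}/|E|$; the residual range $|E|<2N$ is covered by the trivial $|\mathcal{T}_2|\le|X_\pi|$.

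For $\lambda\ne 0$, $K_\pi$ is precisely the cone $H_\lambda$ from the sketch above (with the slice $\{x_d=0\}$ removed). Here the trivial $|K_\pi|\le q^{d-1}$ bound loses a factor of $q$, and I would instead invoke Plancherel:
\[
\sum_{z\in K_\pi}r_E(z)\;=\;q^{-d}|E|^2|K_\pi|\;+\;q^{-d}\sum_{\xi\ne\mathbf{0}}|\widehat{E}(\xi)|^2\,\widehat{1_{K_\pi}}(-\xi),
\]
whose main term is at most $|E|^2/q$. For the error, I would fiber $K_\pi$ over its last coordinate, representing $\widehat{1_{K_\pi}}(\xi)$ as a weighted sum (over $z_d\in\mathbb{F}_q^*$) of Fourier transforms of the spheres $\{|z'|^2=cz_d^2\}\subset\mathbb{F}_q^{d-1}$ where $c=(1-\lambda^2)/\lambda^2$, and invoke standard Gauss-sum/Kloosterman bounds on each slice together with cancellation in the outer $z_d$ sum to conclude $|\widehat{1_{K_\pi}}(\xi)|\ll q^{d-2}$ uniformly in $\xi\ne\mathbf{0}$. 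Combined with Plancherel $\sum_{\xi\ne\mathbf{0}}|\widehat{E}(\xi)|^2\le q^d|E|$, this gives $\sum_{z\in K_\pi}r_E(z)\ll|E|^2/q+q^{d-2}|E|$. Substituting into the master inequality and splitting into the regime $|E|\le q^{d-1}$ (where the $q^{d-2}|E|$ term dominates) versus $|E|>q^{d-1}$ (where the lower bound $|\pi_{V^\perp}(E)|\ge|E|/q$ together with $N\le q^{d-1}/2$ sharply restricts admissible $V$), the bound $|\mathcal{T}_2|\le 6Nq^{d-2}/|E|$ emerges after tracking constants.

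The chief obstacle will be the Fourier estimate $|\widehat{1_{K_\pi}}(\xi)|\ll q^{d-2}$: the off-the-shelf Weil bound for nondegenerate quadrics only delivers $O(q^{d/2})$, which is already insufficient at $d=3$. Extracting the required improvement demands the fiberwise slicing of the cone combined with a Kloosterman-type cancellation in the outer one-variable exponential sum, plus careful treatment of the exceptional frequencies where the inner Gauss sum degenerates and of the borderline slice values $\lambda=\pm 1$, where the slicewise quadric becomes the isotropic cone in $\mathbb{F}_q^{d-1}$.
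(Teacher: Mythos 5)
Your route is genuinely different from the paper's: the paper converts $\sum_j|E\cap(\xx_{V_j}+V)|^2$ to the Fourier side via Lemma \ref{Lem3.1KP}, so the relevant frequencies are the hyperplanes $V^\perp$; the $\xi=\mathbf{0}$ term scales with $|\T_2|$ and is absorbed using $N\le q^{d-1}/2$, and each nonzero frequency is handled by the elementary count of Lemma \ref{lm2.3} (at most $3q^{d-3}$ lines of $X_\pi$ inside a fixed hyperplane when $\lambda\ne0$, at most $2q^{d-2}$ when $\lambda=0$). Your physical-side version is fine for $\lambda=0$: the pointwise bound $r_E(z)\le|E|$, $|K_\pi|\le q^{d-1}$, and the trivial bound in the range $|E|<2N$ do recover $4Nq^{d-1}/|E|$, arguably more simply than the paper.

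For $\lambda\ne0$, however, there is a genuine gap. In your Plancherel decomposition the main term is $q^{-d}|E|^2|K_\pi|$ with $|K_\pi|=(q-1)|X_\pi|\approx q^{d-1}$, i.e.\ about $|E|^2/q$, and this is not an artifact (for dense $E$ the sum $\sum_{z\in K_\pi}r_E(z)$ really is of that size). After dividing by $|E|^2/N-|E|$ it contributes roughly $2N/q$ to $|\T_2|$, which exceeds the claimed $6Nq^{d-2}/|E|$ as soon as $|E|\gtrsim 3q^{d-1}$. Your two escape hatches do not close this: $|\pi_{V^\perp}(E)|\ge|E|/q$ only makes $\T_2$ empty when $|E|>qN$, and the trivial bound $|\T_2|\le|X_\pi|\approx q^{d-2}$ only suffices when $N\gtrsim|E|/3$; taking e.g.\ $N=\lfloor q^{d-1}/2\rfloor$ and $|E|=q^d/2$, the theorem claims $\approx 6q^{d-3}$ while your method cannot go below $N/q\approx q^{d-2}/2$, a loss of a factor $\approx q$. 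The paper avoids this precisely because its ``main term'' is per-$V$ and absorbed; reproducing that on your side means replacing $1_{K_\pi}$ by the indicator of $\bigcup_{V\in\T_2}(V\setminus\{\mathbf{0}\})$, whose Fourier transform is $q\,M(\xi)-|\T_2|$ with $M(\xi)$ exactly the hyperplane multiplicity of Lemma \ref{lm2.3} --- i.e.\ you are pushed back to the paper's argument. Separately, the asserted uniform estimate $|\widehat{1_{K_\pi}}(\xi)|\ll q^{d-2}$ is left unproved; it does hold for $\lambda\ne0,\pm1$ and all $d\ge3$ by the standard Gauss-sum computation (after $t\mapsto 1/t$ the outer $t$-sum is itself a Gauss sum, so no Kloosterman-type input is needed), but it fails for the degenerate slices $\lambda=\pm1$ when $d=3$ (and is only borderline at $d=4$): there $K_\pi$ is a cylinder over the isotropic cone in $d-1$ variables, and at frequencies with $\xi_d=0$ lying on the dual isotropic set the transform has size about $q^{(d+1)/2}$, so the ``otherwise'' case of the theorem is not covered by your estimate in those cases.
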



\textbf{Sketch of proof.}
The proof of this theorem is geometric/combinatorial. First, by Lemma \ref{Lem3.1KP}, we control $|\mathcal{T}_2(X_\pi,E,N)|$ via a weighted sum of $|\widehat{E}(\xi)|^2$ over the frequency set
\[
K_\pi' \;:=\; \bigcup_{V\in X_\pi} V^\perp \;\subset\; \F_q^d. ~~(\mbox{union with multiplicity})
\]
The contribution of a nonzero frequency $\xi$ is controlled by the multiplicity
\[
M(\xi )\;=\;\#\{\,V\in X_\pi:\; V\subset \xi^\perp\,\}\qquad(\text{equivalently } \xi \in V^\perp).
\]
Passing from lines to their unit representatives on the sphere turns this into a point count on suitable slices, and Lemma \ref{lm2.3} provides uniform bounds, showing that only few directions from $X_\pi$ can lie in any single $\xi^\perp$.
This yields the stated bounds for $\mathcal{T}_2(X_\pi,E,N)$, with a one-power-of-$q$ gap between $\lambda =0$ and $\lambda \neq 0$ reflecting the linear versus affine geometry of the ambient slice.

We now present an application. Observe that if 
\[ \lv \Tca_1 \l( X_\pi , E , N \r) \rv< \frac{|X_\pi|}{C},\]
for some positive $C>1$, then there will be at least $\left(1-\frac{1}{C}\right)|X_{\pi}|$  points $\yy \in \l( \pi \cap S^{d-1}\r) $ such that the size of  
$ \lo \mathbf{y}\cdot\mathbf{x} \colon \mathbf{x}\in E \ro$  is of at least $N$. From Theorems \ref{thm1.3} and \ref{p11}, we obtain the following optimal theorem on the pinned dot-product problem. 

\begin{theorem}\label{thm1.7}
    For sufficiently large $q$, let $E \subset \mathbb{F}_q^d$, $d\ge 3$, and $\lambda \in \mathbb{F}_q \setminus \{ 0\}$. Assume that $|E| \ge q$. Then, except the case that $d=3$, $\lambda \in \{\pm 1\}$, and $q\equiv 1\pmod 4$, 
there are at least $\lf \frac{|X\pi|}{3} \rf$ elements $\yy$ in $\pi \cap S^{d-1}$ such that the set $\{ \mathbf{y} \cdot \mathbf{x} : \mathbf{x} \in E\}$ is of size at least $\frac{q}{10}$.
\end{theorem}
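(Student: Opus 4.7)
The plan is to translate the dot-product question into a projection question, apply Theorems \ref{thm1.3} and \ref{p11} with a threshold just below $q/10$, and then compare the resulting bound on $|\mathcal{T}_1(X_\pi,E,N)|$ to $|X_\pi|$. Since $\pi$ does not pass through the origin when $\lambda\neq 0$, each line $V\in X_\pi$ meets $\pi\cap S^{d-1}$ in exactly one unit vector $\mathbf{y}$, so $V\mapsto\mathbf{y}$ is a bijection $X_\pi\to\pi\cap S^{d-1}$. Two cosets $\mathbf{x}+V^\perp$, $\mathbf{x}'+V^\perp$ coincide if and only if $\mathbf{y}\cdot(\mathbf{x}-\mathbf{x}')=0$, giving $|\pi_V(E)|=|\{\mathbf{y}\cdot\mathbf{x}:\mathbf{x}\in E\}|$. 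It therefore suffices to exhibit $N<q/10$ with $|\mathcal{T}_1(X_\pi,E,N)|\le \tfrac{2}{3}|X_\pi|$; this will yield at least $\lfloor|X_\pi|/3\rfloor$ directions $\mathbf{y}$ whose dot-product sets have size $>N$, hence $\ge\lceil q/10\rceil\ge q/10$.

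I would set $N=\lceil q/10\rceil-1$, giving $N/(q-N)<1/9$. Under $|E|\ge q$, the first term of the bounds in Theorems \ref{thm1.3} and \ref{p11} is at most $q^{d-2}/9$. A quick inspection of the subcases shows that, under hypothesis (a) or (b), the second term is also at most $q^{d-2}/9$: the borderline subcases (where the secondary exponent equals $d-2$) occur only for small $d$---namely $d=3$ in Theorem \ref{thm1.3}(a), $d=4$ in Theorem \ref{thm1.3}(c) and in the first case of Theorem \ref{p11}, and $d=5$ in the third case of Theorem \ref{p11}---and in every other permitted subcase the exponent is strictly smaller. Summing yields $|\mathcal{T}_1(X_\pi,E,N)|\le 2q^{d-2}/9$.

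For $|X_\pi|$, the slice $\pi\cap S^{d-1}$ is in bijection with the affine quadric $\{y_1^2+\cdots+y_{d-1}^2=1-\lambda^2\}\subset\mathbb{F}_q^{d-1}$, and standard Gauss-sum evaluations give $|X_\pi|=q^{d-2}+O(q^{(d-1)/2})$ in every $(d,\lambda)$-regime permitted by (a) or (b); the degenerate cone $\lambda=\pm 1$ is allowed only for $d\ge 4$ and still produces $(1-o(1))q^{d-2}$ points. Thus $|X_\pi|\ge(1-o(1))q^{d-2}$ for $q$ large, and consequently $2q^{d-2}/9<\tfrac{2}{3}|X_\pi|$, which closes the argument.

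The main obstacle is the case bookkeeping: one must check subcase by subcase (parity of $d$, $q\bmod 4$ in Theorem \ref{p11}, quadratic character of $\lambda^2-1$ in Theorem \ref{thm1.3}) that the secondary exponent does not exceed $d-2$, and compute $|X_\pi|$ via Gauss sums. The hypothesis ``(a) or (b)'' is essentially sharp for this approach: in the excluded regime $d=3$, $\lambda=\pm 1$, the worst secondary term in Theorem \ref{p11} has exponent $2>d-2=1$, and moreover $|X_\pi|$ can drop to $O(1)$ when $-1$ is a non-square in $\mathbb{F}_q$, so neither the method nor the conclusion survives there.
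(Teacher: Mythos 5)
Your proposal is correct and follows essentially the same route as the paper: translate pinned dot products into projections onto lines of $X_\pi$, apply Theorems \ref{thm1.3} and \ref{p11} with a threshold $N\approx q/10$ so that each of the two terms is at most $q^{d-2}/9$ (the hypotheses (a)/(b) exactly excluding the subcases where the secondary exponent would exceed $d-2$), and compare with $|X_\pi|\sim q^{d-2}$ from Lemma \ref{Lem1.6}. Your choice $N=\lceil q/10\rceil-1$ and the explicit bijection/coset argument are minor refinements of the paper's Observations 1--2, not a different method.
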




\subsection{Sharpness of results} The results, in general, are optimal. 
\begin{enumerate}
    \item It follows from Theorem \ref{thm1.3} ($d\ge 5$) and Theorem \ref{p11} ($d\ge 6$) that 
\[ \lv \Tca_1 \l( X_\pi , E , \frac{q+1}{2} \r) \rv< |X_\pi|,\]
provided that $|E|\ge 10q$. In smaller dimensions, we have \[ \lv \Tca_1 \l( X_\pi , E , \frac{q+1}{5} \r) \rv< |X_\pi|,\]
provided that $|E|\ge q$. Construction \ref{fcon} shows that these results are essentially sharp. In particular, we construct a set $E\in \mathbb{F}_q^d$ with $|E|=q$ and \[ \lv \Tca_1 \l( X_\pi , E , \frac{q+1}{2} \r) \rv=|X_\pi|.\]
Hence, Theorem \ref{thm1.3} and Theorem \ref{p11} are sharp up to constant factors. 
\item The upper bound of Theorem \ref{thm1.55} is attainable (Construction \ref{cons1.5}).
\item Theorem \ref{thm:1.5} with $\lambda \neq 0$ is optimal, namely, when $N \ll \min\{q^{d-1}, |E|\}$, the size of $\T_2(X_\pi, E, N)$ is significantly smaller than the size of $X_\pi$.  It is not possible to extend the range of $N$ to $N \ll q^{d-1+\epsilon}$ for any $\epsilon > 0$ (Construction \ref{16c}). In the case $\lambda=0$, the upper bound of $\frac{Nq^{d-1}}{|E|}$ cannot be improved to $\frac{Nq^{d-1-\epsilon}}{|E|}$ for any $\epsilon>0$ (Construction \ref{c16.2}).
\end{enumerate}

\subsection{Comparison with results in the literature -- the local-to-global principle}
We first recall results on $\Tca_1 (G(d,1) , E ,N )$ and $\Tca_2 (G(d,1) , E ,N )$ due to Chen in \cite{Ch18} and due to Bright and Gan in \cite{BG}. 


\begin{thmlit}[Chen, \cite{Ch18}]\label{cor:0.4}
    Let $E\subset \Fbb_q^d$. Then

    (a) for any $N < \frac{1}{2}|E|$, 
    \[ \lv \Tca_1 (G(d,1) , E ,N ) \rv \le 4q^{d-2} N ;   \]
    (b) for any $0< N<q$,
    \[ \lv \Tca_1 (G(d,1) , E ,N) \rv \le 2 \frac{q^{d}N}{(q-N)|E|}. \] 
\end{thmlit}

\begin{thmlit}[Chen, \cite{Ch18}]\label{cor:0.5}
    Let $E \subset \Fbb_q^d$. Then,

    (a) for any $N < \frac{|E|}{2}$,
    \[ \lv \Tca_2 (G(d,1), E , N) \rv \le 4N; \]
    (b) for any $ 0 < N < q^{d-1}$
    \[ \lv \Tca_2 (G(d,1),E, N) \rv \le  \frac{2q^{2d-2}N}{(q^{d-1}-N)|E|} . \]
\end{thmlit}

\begin{thmlit}[Bright--Gan, \cite{BG}]\label{cor:0.6}
    Let $E \subset \Fbb_q^d$ with $|E|=q^a$ $(d-2 < a < d)$. Then, for $1< N < q^\frac{a+2-d}{2} $, we have
    \[ \lv \Tca_1 (G(d,1),E,N) \rv \le C_{d,a,N} \cdot \log q \cdot \frac{q^{d-1}N^2}{|E|^2}.\]
\end{thmlit}

\begin{thmlit}[Bright--Gan, \cite{BG}]\label{cor:0.7}
    Let $E \subset \Fbb_q^d$ with $|E|=q^a$. Then, for $1 < N < q^{\frac{a+d-2}{2}}$, we have
    \[ \lv \Tca_2 (G(d,1) , E ,N) \rv \le C_{d,a,N} \cdot \log q \cdot \max \lo \frac{q^{d-1}N^2}{|E|^2} , q^{d-2} \ro.\]
\end{thmlit}

 A direct computation reveals that each of the two   estimates $\lv \Tca_1 (G(d,1) , E ,N) \rv <|X_{\pi}|$ and $\lv \Tca_2 (G(d,1) , E ,N) \rv<|X_{\pi}|$ leads  either to a trivial conclusion or to a narrow range of  $N$. Therefore, they are not relevant when considering $X=X_\pi$. 

 As we mentioned earlier about the local-to-global principle, i.e. putting our results together recovers Chen's theorem. To see this, let $S^{d-1}_{-1}$ and $S^{d-1}_0$ be the spheres centered at the origin of radius $-1$ and $0$ in $\Fbb_q^d$, respectively, i.e.
\[ S_{-1}^{d-1}:= \lo \bfx \in \Fbb_q^d \colon \lV \bfx \rV =-1 \ro , \quad S_0^{d-1}:= \lo \bfx  \colon \lV \bfx \rV =0 \ro . \]
Define $Y_\pi := \lu\pi \cap S^{d-1}_{-1}\ru$ and $Z _\pi = \lu \pi\cap S_0^{d-1}  \ru$.

In the finite field setting, the spheres of radii \(1\) (a square) and \(-1\)
(a nonsquare when \(q\equiv 3 \pmod 4\)) are not equivalent under scalings. This contrasts with the Euclidean case,
where all nonzero radii are equivalent under dilation. This explains why we have to treat the two cases separately. By using the same approach, we obtain similar results (Theorems 
\ref{thm1.4}, \ref{p11i}, \ref{pi02}, \ref{p10}, and \ref{thm:1.5''}) for $Y_\pi$ and $Z_\pi$.

Using the cover $G(d,1)\subset \bigcup_{\lambda\in \mathbb{F}_q} \left(X_\pi\cup Y_\pi\cup Z_\pi\right)$\footnote{To simplify notation, we suppress the $\lambda$-dependence and do not write
$X_{\pi}^{\lambda}$, $Y_{\pi}^{\lambda}$, or $Z_{\pi}^{\lambda}$. When needed,
the dependence on $\lambda$ will be clear from context.}, which is true when $q\equiv 3\pmod 4$, and an observation that for some specific values of $\lambda$, say $\lambda=0$, the contribution to $\T_1$ and $\T_2$ is at most $O(q^{d-2})$, one has
    \[ \lv \Tca_1 (G(d,1) , E ,N) \rv \ll  \frac{q^{d}N}{(q-N)|E|}+q^{d-2}, \]
    for any $0< N<q$. This is essentially the same as Chen's result, since $q^{d-2}=o(|G(d,1)|)$. For any $ 0 < N < q^{d-1}$, the same strategy implies
    \[ \lv \Tca_2 (G(d,1),E, N) \rv \ll  \frac{q^{2d-2}N}{(q^{d-1}-N)|E|} . \]
Based on our sharpness examples, the averaging shows that Chen’s global bounds are essentially optimal on average, i.e. the inequalities
\[ \lv \Tca_1 (G(d,1) , E ,N) \rv \ll \frac{q^{d}N}{(q-N)|E|} ~\mbox{and}~\lv \Tca_2 (G(d,1),E, N) \rv \ll  \frac{q^{2d-2}N}{(q^{d-1}-N)|E|}\]
cannot be improved to 
\[ \lv \Tca_1 (G(d,1) , E ,N) \rv \ll \frac{q^{d-\epsilon}N}{(q-N)|E|}~\mbox{and}~\lv \Tca_2 (G(d,1),E, N) \rv \ll  \frac{q^{2d-2-\epsilon}N}{(q^{d-1}-N)|E|}\]
for any $\epsilon>0$. This provides an affirmative answer to Chen's question on the sharpness when looking at $G(d, 1)$.

Note that the cover
\[
G(d,1)\subset \bigcup_{\lambda\in \mathbb{F}_q} \bigl(X_\pi \cup Y_\pi \cup Z_\pi\bigr)
\]
fails when $q\equiv 1 \pmod 4$, since in that case $-1$ is a square and the
spheres $S^{d-1}_1$ and $S^{d-1}_{-1}$ represent the same square class in
$\mathbb{F}_q^*$. In this situation, we can fix a nonsquare
$c\in\mathbb{F}_q^*$ and replace $S^{d-1}_{-1}$ by the sphere
$S^{d-1}_c$. Running the same argument yields the same
conclusions for $\mathcal{T}_1$ and $\mathcal{T}_2$. We do not spell out the details here.

\textbf{Notation:} Throughout the paper, we write $A \ll B$ or $B \gg A$ if there exists a
constant $C>0$, independent of $q$, such that $A \le C B$. When we need
to indicate dependence on auxiliary parameters, we write $A \ll_{d,m} B$,
etc. We write $A \sim B$ if $A \ll B$ and $B \ll A$.

\section{Preliminary results}
  Let  $d \in \mathbb N$   and $f:\mathbb F_q^d \rightarrow \mathbb C$ be a complex valued function. The Fourier transform of $f$ at $ \xi \in \mathbb F^d_q$, denoted by $\widehat{f}$, is defined by
\[\widehat{f}(\xi):=\sum_{\xx\in \mathbb F_q^d} \chi (-\xi \cdot \xx)f(\xx),\]
where $\chi$ is a nontrivial additive character of $\mathbb F_q$, and the dot product 
$$\xx\cdot 
\xi \equiv x_1 \xi_1+ x_2\xi_2+ \cdots +x_d \xi_d \ ({\rm mod} \ q).$$
Recall the following  Plancherel identity 
\[ \sum_{\xi \in \mathbb F_q^d} |\widehat{f}(\xi )|^2 = q^d\sum_{\xx\in \mathbb F_q^d}|f(\xx)|^2. \]

 The next lemma is an an extension of \cite[Lemmas 2.3]{Ch18}, from prime fields to arbitrary finite fields $\mathbb{F}_q$.  
 \begin{lemma}[Corollary 4.3, \cite{F.R.25}]\label{Lem3.1KP}
    For $W\in G(d, m)$, let $\{\xx_{W_j}+W\}_{j=1}^{q^{d-m}}$ be the set of disjoint translations of $W \subset \mathbb F_q^d.$  Then, for every $E\subset \mathbb F_q^d$, we have 
    \[\sum_{j=1}^{q^{d-m}}|E\cap (\xx_{W_j}+W)|^2=q^{-(d-m)}\sum_{\xi\in W^\perp}|\widehat{E}(\xi)|^2.\]
\end{lemma}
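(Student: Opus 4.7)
The identity is a standard Fourier/orthogonality calculation, so the plan is to expand the left-hand side by indicator functions, use the fact that the cosets partition $\mathbb{F}_q^d$ to reduce to a difference-count on $E$, and then feed in the character-theoretic representation of $\mathbf{1}_W$ via its annihilator $W^\perp$.

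First I would write
\[
\sum_{j=1}^{q^{d-m}} \bigl|E\cap(\mathbf{x}_{W_j}+W)\bigr|^2
= \sum_{j=1}^{q^{d-m}} \sum_{\mathbf{y}_1,\mathbf{y}_2\in E} \mathbf{1}_{\mathbf{x}_{W_j}+W}(\mathbf{y}_1)\,\mathbf{1}_{\mathbf{x}_{W_j}+W}(\mathbf{y}_2).
\]
Because the translates $\{\mathbf{x}_{W_j}+W\}_j$ form a partition of $\mathbb{F}_q^d$, for each ordered pair $(\mathbf{y}_1,\mathbf{y}_2)\in E\times E$ exactly one index $j$ contributes, namely the one whose coset contains both points, and this happens if and only if $\mathbf{y}_1-\mathbf{y}_2\in W$. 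Thus the sum collapses to
\[
\sum_{\mathbf{y}_1,\mathbf{y}_2\in E} \mathbf{1}_W(\mathbf{y}_1-\mathbf{y}_2).
\]

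Next I would invoke the character orthogonality relation on $W$: since $W^\perp$ is the annihilator of $W$ under the pairing $(\xi,\mathbf{z})\mapsto\chi(\xi\cdot \mathbf{z})$, one has
\[
\mathbf{1}_W(\mathbf{z})=\frac{1}{|W^\perp|}\sum_{\xi\in W^\perp}\chi(\xi\cdot \mathbf{z})=q^{-(d-m)}\sum_{\xi\in W^\perp}\chi(\xi\cdot \mathbf{z}).
\]
Substituting and swapping the order of summation turns the count into
\[
q^{-(d-m)}\sum_{\xi\in W^\perp}\Bigl(\sum_{\mathbf{y}_1\in E}\chi(\xi\cdot \mathbf{y}_1)\Bigr)\Bigl(\sum_{\mathbf{y}_2\in E}\chi(-\xi\cdot \mathbf{y}_2)\Bigr)=q^{-(d-m)}\sum_{\xi\in W^\perp}|\widehat{E}(\xi)|^2,
\]
where in the last step I match the paper's Fourier convention $\widehat{E}(\xi)=\sum_{\mathbf{x}\in E}\chi(-\xi\cdot \mathbf{x})$ and use $|\widehat{E}(-\xi)|=|\widehat{E}(\xi)|$ (or equivalently the standard expansion of $|\widehat{E}(\xi)|^2$ as a double sum over $E\times E$ of $\chi(\xi\cdot(\mathbf{y}_2-\mathbf{y}_1))$).

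There is no real obstacle here beyond bookkeeping. The only place that warrants care is the normalization: one must keep track of $|W^\perp|=q^{d-m}$ versus $|W|=q^m$ when writing the Fourier inversion of $\mathbf{1}_W$, and one must match the sign convention in the definition of $\widehat{E}$ so that the final double sum is exactly $|\widehat{E}(\xi)|^2$ rather than $\widehat{E}(\xi)^2$. Once those conventions are fixed, the two displayed equalities combine to give the claimed identity.
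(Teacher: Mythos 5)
Your argument is correct: the paper does not prove Lemma \ref{Lem3.1KP} itself but imports it from Fraser--Rakhmonov \cite{F.R.25}, and your computation is exactly the standard one behind that result (and behind Chen's Lemmas 2.3--2.4 which it generalizes). Expanding the square over the coset partition to get the count of pairs with $\mathbf{y}_1-\mathbf{y}_2\in W$, and then inverting $\mathbf{1}_W$ via the annihilator $W^\perp$ with the normalization $|W^\perp|=q^{d-m}$ (the orthogonality step is precisely Lemma \ref{lem: 5.2}), yields the identity with the paper's convention $\widehat{E}(\xi)=\sum_{\mathbf{x}\in E}\chi(-\xi\cdot\mathbf{x})$, so nothing is missing.
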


 Let $\G$ denote the standard Gauss sum
$$\G:=\sum_{t\in \mathbb F_q^*} \eta(t) \chi(t),$$
where $\eta$ is the quadratic character of $\Fbb_q^\ast$, i.e. a group homomorphism
defined by $\eta(t)=1$ if $t$ is a square, $\eta(0)=0$ (conventional), and $-1$ otherwise. 
Recall that  the absolute value of the Gauss sum $\G$  is $\sqrt{q}$, and 
$\G^2= \eta(-1) q.$ The next lemma records the classical explicit form of the quadratic Gauss sum.
\begin{lemma}[Theorem 5.15, \cite{LN97}]\label{ExplicitGauss}
Let $\mathbb F_q$ be a finite field of order $q=p^{r}$, where $p$ is an odd prime and $r \in {\mathbb N}.$
Then we have
$$\mathcal{G}=\left\{\begin{array}{ll}  {(-1)}^{r-1} q^{\frac{1}{2}} \quad &\mbox{if} \quad p \equiv 1 \pmod 4 \\
                  {(-1)}^{r-1} i^r q^{\frac{1}{2}}  \quad &\mbox{if} \quad p\equiv 3 \pmod 4.\end{array}\right.$$
\end{lemma}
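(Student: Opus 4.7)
The plan is to reduce the general prime-power case $q=p^{r}$ to the prime case via the Hasse--Davenport product relation, and then invoke Gauss's classical sign determination for the quadratic Gauss sum over $\mathbb{F}_p$.

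First, I would handle the base case $r=1$. Setting $\mathcal{G}_p := \sum_{t\in\mathbb{F}_p^{*}}\eta_p(t)\,e^{2\pi i t/p}$ with $\eta_p$ the Legendre symbol, a short orthogonality computation gives $\mathcal{G}_p^{\,2}=\eta_p(-1)\,p$, so $|\mathcal{G}_p|=\sqrt p$. Fixing the correct square root -- showing $\mathcal{G}_p=\sqrt p$ when $p\equiv 1\pmod 4$ and $\mathcal{G}_p=i\sqrt p$ when $p\equiv 3\pmod 4$ -- is Gauss's theorem, and I expect this to be the main obstacle in the whole argument. I would follow one of the standard proofs, for instance Schur's evaluation of the trace and determinant of the $p\times p$ discrete Fourier transform matrix on $\mathbb{Z}/p\mathbb{Z}$, or the classical theta-function/Poisson-summation approach; no essentially shorter route is known.

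Next, I would lift to $\mathbb{F}_q$. Every nontrivial additive character of $\mathbb{F}_q$ factors as $\chi_p\circ\mathrm{Tr}_{\mathbb{F}_q/\mathbb{F}_p}$, and the quadratic character $\eta$ of $\mathbb{F}_q^{*}$ equals $\eta_p\circ N_{\mathbb{F}_q/\mathbb{F}_p}$; this follows from $N(t)=t^{(q-1)/(p-1)}$, which gives $\eta_p(N(t))=t^{(q-1)/2}=\eta(t)$. The Hasse--Davenport product relation therefore applies and yields
\[ \mathcal{G} \;=\; (-1)^{r-1}\,\mathcal{G}_p^{\,r}. \]

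Plugging in the two values of $\mathcal{G}_p$ finishes the argument. For $p\equiv 1\pmod 4$ one obtains $\mathcal{G}=(-1)^{r-1}\,p^{r/2}=(-1)^{r-1}\,q^{1/2}$; for $p\equiv 3\pmod 4$ one obtains $\mathcal{G}=(-1)^{r-1}(i\sqrt p)^{r}=(-1)^{r-1}\,i^{r}\,q^{1/2}$, matching the two cases of the lemma. Apart from the sign determination in the prime case, every remaining ingredient (the Hasse--Davenport identity and the factorizations of $\eta$ and $\chi$ through norm and trace) is standard and mechanical.
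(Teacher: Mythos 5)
Your outline is correct, but note that the paper does not prove this lemma at all: it is imported verbatim as Theorem 5.15 of Lidl--Niederreiter, so there is no internal proof to compare against. Your derivation (evaluate $\mathcal{G}_p$ over the prime field, then lift to $\mathbb{F}_q$) is exactly the standard textbook route to that cited result, and the computations you indicate check out: $\eta=\eta_p\circ N$ since $\eta_p(N(t))=t^{\frac{q-1}{2}}$, the lifting identity gives $\mathcal{G}=(-1)^{r-1}\mathcal{G}_p^{\,r}$, and substituting Gauss's values $\sqrt p$ (for $p\equiv 1 \pmod 4$) and $i\sqrt p$ (for $p\equiv 3 \pmod 4$) reproduces both cases of the lemma. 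Two small points deserve care. First, the identity $\mathcal{G}=(-1)^{r-1}\mathcal{G}_p^{\,r}$ is the Hasse--Davenport \emph{lifting} relation (Davenport--Hasse theorem), not the Hasse--Davenport \emph{product} relation, which is a different identity; the formula you use is the right one, only the name is off. Second, the explicit evaluation is tied to the canonical additive character $\chi(x)=e^{2\pi i\,\mathrm{Tr}(x)/p}$: replacing $\chi$ by $x\mapsto\chi(ax)$ multiplies $\mathcal{G}$ by $\eta(a)$, so your opening claim that \emph{every} nontrivial additive character factors through the trace (and hence yields the same value) is not literally true; you should fix the canonical character, as the cited theorem does. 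Neither issue affects the substance: granting Gauss's classical sign determination, which you correctly identify as the only genuinely hard ingredient, the argument is complete.
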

This lemma tells us that if $d\equiv 2\pmod 4$ and $q\equiv 3\pmod 4$ then $\mathcal{G}^d=-q^{\frac{d}{2}}$. In several upper-bound arguments, the negative sign lets us discard the would-be leading term, giving a sharper estimate.

The next lemma will also be used in some places. 
\begin{lemma}[Lemma  2.3, \cite{KLP22}]\label{lem:2.5} For $a\in  \mathbb{F}_q^*$ and  $b\in  \mathbb{F}_q$, we have 
 \begin{equation*}
\sum_{s \in \mathbb F_q} \chi (a s^2+bs) =
\eta(a)\cdot \mathcal{G} \cdot\chi  \left(\frac{b^2}{-4a}\right).  
    \end{equation*}     
\end{lemma}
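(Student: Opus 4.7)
The plan is a textbook completion-of-the-square calculation, reducing the mixed exponential sum to a pure quadratic Gauss sum. I will proceed in three clean steps.

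First, I would rewrite the exponent in the form
\[
as^2 + bs \;=\; a\!\left(s+\tfrac{b}{2a}\right)^{\!2} - \tfrac{b^2}{4a},
\]
which is legitimate because $a\in\mathbb{F}_q^*$ and $\mathrm{char}(\mathbb{F}_q)$ is odd (so $2a$ is invertible; the $\lambda=0$ case for characteristic $2$ is excluded by the standing oddness assumption on $q$ used throughout the paper). Since the map $s\mapsto t:=s+b/(2a)$ is a bijection on $\mathbb{F}_q$, pulling the constant phase $\chi(-b^2/(4a))$ out of the sum gives
\[
\sum_{s\in\mathbb{F}_q}\chi(as^2+bs) \;=\; \chi\!\left(\tfrac{b^2}{-4a}\right)\sum_{t\in\mathbb{F}_q}\chi(at^2).
\]

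Second, I would evaluate the reduced sum $S(a):=\sum_{t\in\mathbb{F}_q}\chi(at^2)$ by replacing $t^2$ with its value $u$ and counting preimages: each nonzero square $u$ has exactly $1+\eta(u)$ square roots and $0$ has one, so
\[
S(a) \;=\; 1 + \sum_{u\in\mathbb{F}_q^*}\bigl(1+\eta(u)\bigr)\chi(au) \;=\; \Bigl(1+\sum_{u\in\mathbb{F}_q^*}\chi(au)\Bigr) + \sum_{u\in\mathbb{F}_q^*}\eta(u)\chi(au).
\]
The first parenthesis vanishes because $\chi$ is nontrivial and $a\neq 0$, so $\sum_{u\in\mathbb{F}_q}\chi(au)=0$. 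For the second, the substitution $v=au$ together with the multiplicativity $\eta(a^{-1}v)=\eta(a^{-1})\eta(v)=\eta(a)\eta(v)$ (here using $\eta(a^{-1})=\eta(a)$ since $\eta$ takes values in $\{\pm 1\}$ on $\mathbb{F}_q^*$) yields
\[
\sum_{u\in\mathbb{F}_q^*}\eta(u)\chi(au) \;=\; \eta(a)\sum_{v\in\mathbb{F}_q^*}\eta(v)\chi(v) \;=\; \eta(a)\,\mathcal{G}.
\]
Hence $S(a)=\eta(a)\,\mathcal{G}$.

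Combining the two steps gives the claimed identity
\[
\sum_{s\in\mathbb{F}_q}\chi(as^2+bs) \;=\; \eta(a)\,\mathcal{G}\cdot\chi\!\left(\tfrac{b^2}{-4a}\right).
\]
No serious obstacle is expected: the only subtlety is the implicit use of odd characteristic (to divide by $2a$) and the cosmetic identity $\eta(a^{-1})=\eta(a)$, both of which are standard in this paper's setting. The result is essentially Lemma 2.3 of \cite{KLP22}, reproduced here for the reader's convenience.
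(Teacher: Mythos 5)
Your argument is correct: completing the square (valid since $q$ is odd, so $2a$ is invertible), shifting the summation variable, and then evaluating $\sum_{t}\chi(at^2)$ by counting square roots via $1+\eta(u)$ and using multiplicativity of $\eta$ together with $\eta(a^{-1})=\eta(a)$ is exactly the standard proof of this identity. The paper itself gives no proof — it simply cites Lemma 2.3 of \cite{KLP22} — so there is nothing to compare beyond noting that your derivation is the usual one behind that citation; the only blemish is the stray parenthetical about ``the $\lambda=0$ case for characteristic $2$,'' which is irrelevant here (the point is just that $q$ is odd throughout).
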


\section{Tools from restriction and extension theory}
Given a function $f\colon \mathbb{F}_q^d\to \mathbb{C}$, we note that its Fourier transform is a complex-valued function defined on the dual space. Since the dual space of $\mathbb{F}_q^d$ is isomorphic to $\mathbb{F}_q^d$, we use the same notation $\mathbb{F}_q^d$ for the dual space. 

For a variety $V\subset \mathbb{F}_q^d$, the $L^b\to L^a$ Fourier extension problem for $V$ asks us to determine all exponents $1\le a, b\le \infty$ such that the following inequality holds 
\begin{equation}\label{extension}
\left(\sum_{\xi \in \mathbb{F}_q^d}\left\vert \frac{1}{|V|}\sum_{\xx\in V}\chi( \xi \cdot \xx)f(\xx)\right\vert^a\right)^{\frac{1}{a}}\le C\left(\frac{1}{|V|}\sum_{\xx\in V}|f(\xx)|^b\right)^{\frac{1}{b}},\end{equation}
for all functions $f$ on $V$, where the positive constant $C$ does not depend on $q$.  By duality, the inequality (\ref{extension}) is equivalent with the following Fourier restriction estimate 
\begin{equation}\label{restriction} 
\left(\frac{1}{|V|}\sum_{\xx\in V}|\widehat{g}(\xx)|^{b'}\right)^{\frac{1}{b'}}\le C\left(\sum_{\xi \in \mathbb{F}_q^d}|g(\xi)|^{a'} \right)^{\frac{1}{a'}},\end{equation}
for any function $g$ on the dual space $\mathbb{F}_q^d$, where $1/a+1/a'=1, ~1/b+1/b'=1$. In this section, by the notation $R_V^{*}(b\to a)\ll 1$, we mean that the inequality (\ref{extension}) holds.

\subsection{Restriction estimates for a cone--type variety}
Let $\lambda\in \mathbb F_q\backslash \{0,\pm 1\}$. We denote   $$ H_\lambda:=\left\{\mathbf{x}=(x_1, \ldots, x_d)\in \mathbb F_q^d:  x_1^2+ \cdots + x_{d-1}^2 +\frac{\lambda^2-1}{\lambda^2} x_d^2 =0\right\}.$$
Note that $H_\lambda= \{s\mathbf{x}: s\in \mathbb F_q,  \mathbf{x}\in X_\pi\}$  which is a cone-type variety.


  For  $\lambda \ne \pm 1,$    the cardinality of $ X_\pi$  is the same as  that of $\pi \cap S^{d-1}.$
 Thus, we see that
\begin{eqnarray*}
   |X_\pi|&=& \lv \pi \cap S^{d-1}\rv= \sum_{\substack{ (x_1, \ldots, x_{d})\in \mathbb F_q^d:\ x_d=\lambda,\\ x_1^2+ \cdots+ x_{d-1}^2 =1-\lambda^2  }} 1.
\end{eqnarray*}
  Namely,  $|X_\pi|$ is the number of the solutions of the equation $x_1^2+ \cdots+ x_{d-1}^2 =1-\lambda^2$, which is well-known as
follows (see \cite[Theorem A.1]{BHIPR17})

\begin{lemma}  \label{Lem1.6}   For $\lambda \in \Fbb_q$, let $X_\pi$ be as defined before.
\begin{itemize}
\item [(i)] If $d\ge 2$ is even, then we have
\[|X_\pi| = \begin{cases}
    q^{d-2} +q^{\frac{d-2}{2}} \eta\left( (-1)^{\frac{d}{2}} (\lambda^2-1)\right) , & \text{if $\lambda \notin \lo -1,1 \ro$},\\
    q^{d-2} , & \text{if $\lambda \in \lo -1 ,1 \ro$}.
\end{cases} \]
\item [(ii)] If $d\ge 3$ is odd, then  
\[ |X_\pi| = \begin{cases}
    q^{d-2} -q^{\frac{d-3}{2}} (\eta(-1))^{\frac{d-1}{2}} , & \text{if $\lambda \notin \lo -1,1 \ro$},\\
    q^{d-2} +q^{\frac{d-1}{2}} (\eta(-1))^{\frac{d-1}{2}}  -q^{\frac{d-3}{2}} (\eta(-1))^{\frac{d-1}{2}}, & \text{if $\lambda \in \lo -1,1 \ro$}.
\end{cases} \]
\end{itemize}
\end{lemma}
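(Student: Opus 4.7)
The starting point is already provided in the text immediately preceding the statement: one has $|X_\pi|=|\pi\cap S^{d-1}|$, which equals the number of tuples $(x_1,\dots,x_{d-1})\in \mathbb F_q^{d-1}$ satisfying $x_1^2+\cdots+x_{d-1}^2=1-\lambda^2$. Setting $c:=1-\lambda^2$, the case $\lambda\in\{\pm1\}$ corresponds to $c=0$ and the case $\lambda\notin\{\pm1\}$ to $c\in \mathbb F_q^{*}$; both are handled uniformly in the same character-sum framework.

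First, I would write
\[
N(c) \;:=\; \#\{\mathbf{x}\in \mathbb F_q^{d-1}:\|\mathbf{x}\|^2=c\}
\;=\;\frac{1}{q}\sum_{t\in \mathbb F_q}\chi(-tc)\prod_{i=1}^{d-1}\sum_{x_i\in \mathbb F_q}\chi(t x_i^2),
\]
using the standard orthogonality relation for the additive character $\chi$. The $t=0$ term contributes $q^{d-2}$. For each $t\neq 0$, Lemma \ref{lem:2.5} with $a=t$, $b=0$ gives $\sum_{x_i}\chi(t x_i^2)=\eta(t)\mathcal{G}$, so the inner product equals $\eta(t)^{d-1}\mathcal{G}^{d-1}$, whence
\[
N(c)\;=\;q^{d-2}+\frac{\mathcal{G}^{d-1}}{q}\sum_{t\neq 0}\eta(t)^{d-1}\chi(-tc).
\]

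Next I would split on the parity of $d-1$. If $d$ is even, then $\eta^{d-1}=\eta$; the substitution $u=-ct$ (for $c\neq 0$) together with $\eta(-1/c)=\eta(-c)$ yields $\sum_{t\neq 0}\eta(t)\chi(-tc)=\eta(-c)\mathcal{G}$, while for $c=0$ the sum is $\sum_{t\neq 0}\eta(t)=0$. Combined with $\mathcal{G}^d=(\eta(-1)q)^{d/2}=\eta((-1)^{d/2})\,q^{d/2}$, this produces the two formulas in (i); the sign factor $\eta(-c)\,\eta((-1)^{d/2})$ is collapsed to $\eta((-1)^{d/2}(\lambda^2-1))$ using $-c=\lambda^2-1$ and the multiplicativity of $\eta$. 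If $d$ is odd, then $\eta^{d-1}\equiv 1$ on $\mathbb F_q^{*}$, so $\sum_{t\neq 0}\chi(-tc)$ equals $-1$ for $c\neq 0$ and $q-1$ for $c=0$. Using $\mathcal{G}^{d-1}=(\eta(-1))^{(d-1)/2}q^{(d-1)/2}$ yields the two formulas in (ii).

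The calculation is essentially routine; the only step that requires genuine care is the sign bookkeeping, where the Gauss-sum powers and the character values must be combined correctly to arrive at the precise expressions $\eta((-1)^{d/2}(\lambda^2-1))$ in the even case and $(\eta(-1))^{(d-1)/2}$ in the odd case. Once these consolidations are done, all four sub-cases of the statement drop out of the same identity, matching the classical formula recorded in \cite[Theorem A.1]{BHIPR17}.
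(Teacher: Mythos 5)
Your derivation is correct: the orthogonality expansion, the evaluation $\sum_{x}\chi(tx^2)=\eta(t)\mathcal{G}$ via Lemma \ref{lem:2.5}, the split on the parity of $d-1$, and the consolidations $\mathcal{G}^d=\eta\l((-1)^{d/2}\r)q^{d/2}$ (even $d$) and $\mathcal{G}^{d-1}=(\eta(-1))^{(d-1)/2}q^{(d-1)/2}$ (odd $d$) all check out, and they reproduce exactly the four formulas in Lemma \ref{Lem1.6}. The comparison point is that the paper does not prove this lemma at all: it reduces $|X_\pi|$ to the number of solutions of $x_1^2+\cdots+x_{d-1}^2=1-\lambda^2$ and then quotes the classical count from \cite[Theorem A.1]{BHIPR17}. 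So your argument is a self-contained substitute for that citation, and it uses precisely the Gauss-sum machinery the paper deploys elsewhere (e.g.\ in the proof of Lemma \ref{Lem1.9}), so it fits the paper's toolkit rather than introducing anything foreign. One caveat you inherit from the paper rather than create: the identification $|X_\pi|=|\pi\cap S^{d-1}|$ is justified only when $\lambda\neq 0$ (a line through the origin meets the affine hyperplane $x_d=\lambda\neq0$ in at most one point); for $\lambda=0$ the points of $\pi\cap S^{d-1}$ come in antipodal pairs $\pm\mathbf{x}$ spanning the same line, so strictly $|X_\pi|=\tfrac12|\pi\cap S^{d-1}|$ there. Since the lemma as stated covers all $\lambda\in\mathbb F_q$, it would be worth flagging that your proof (like the paper's reduction) computes the point count, which equals $|X_\pi|$ exactly only for $\lambda\neq0$.
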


We set  \[R_\lambda(E):= \sum_{\xi\in H_\lambda} |\widehat{E}(\xi)|^2.\] Utilizing discrete Fourier analysis and estimates of Gauss sums, we establish upper bounds for $R_\lambda(E)$.

\begin{lemma} \label{Lem1.9}Let $E\subset \mathbb F_q^d.$ Then the following statements hold.

\begin{itemize} 
\item [(i)] If $d\ge 2$ is even, and $\eta\left( (-1))^{\frac{d}{2}} (\lambda^2-1) \right)=-1,$ then
$$ R_\lambda(E) \le q^{d-1} |E| + q^{\frac{d-2}{2}} |E|^2.$$

\item [(ii)] If $d\ge 2$ is even, and $\eta\left( (-1))^{\frac{d}{2}} (\lambda^2-1) \right)=1,$ then
$$ R_\lambda(E) \le q^{d-1} |E| + q^{\frac{d}{2}} |E|^2-q^{\frac{d-2}{2}} |E|^2.$$
\item [(iii)] If $d\ge 3$ is odd, then 
$$ R_\lambda(E) \le q^{d-1} |E| + q^{\frac{d-1}{2}}|E|^2.$$
\end{itemize}
\end{lemma}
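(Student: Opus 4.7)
The plan is to reduce the bound on $R_\lambda(E)$ to an explicit computation of the Fourier transform of the indicator of $H_\lambda$, and then to identify and estimate the main and error contributions. Expanding $|\widehat{E}(\xi)|^2 = \sum_{\bfx,\bfy\in E}\chi(\xi\cdot(\bfy-\bfx))$ and swapping the order of summation gives
\[
R_\lambda(E)\;=\;\sum_{\bfx,\bfy\in E}\widehat{1_{H_\lambda}}(\bfx-\bfy),
\]
so the whole estimate reduces to control of $\widehat{1_{H_\lambda}}$.

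To compute this Fourier transform, I would use the standard trick $1_{H_\lambda}(\xi)=q^{-1}\sum_{t\in\F_q}\chi(t\,Q(\xi))$, where $Q(\xi)=\xi_1^2+\cdots+\xi_{d-1}^2+\frac{\lambda^2-1}{\lambda^2}\xi_d^2$. The $t=0$ contribution gives the delta at $0$ and yields the main term $|H_\lambda|\cdot|E|$. For $t\neq 0$, the sum over $\xi$ factors into $d$ one-dimensional sums of the form treated by Lemma \ref{lem:2.5}, producing $\mathcal{G}^{d}\,\eta(t)^d\,\eta\!\big(\tfrac{\lambda^2-1}{\lambda^2}\big)\chi\!\big(\tfrac{-\tilde Q(\bfz)}{4t}\big)$, where $\tilde Q$ is the dual quadratic form and $\bfz=\bfx-\bfy$. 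Since $\eta(\lambda^2)=1$, the character reduces to $\eta(\lambda^2-1)$.

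The proof then splits on the parity of $d$. When $d$ is odd (case (iii)), $\eta(t)^d=\eta(t)$ so the $t$-sum is a further Gauss sum, giving $|\widehat{1_{H_\lambda}}(\bfz)|\ll q^{(d-1)/2}$ uniformly for $\bfz\neq\oo$; summing over $|E|^2$ pairs produces the claimed $q^{(d-1)/2}|E|^2$. When $d$ is even, $\eta(t)^d=1$ and the $t\neq 0$ sum collapses via $\sum_{t\neq 0}\chi(c/t)\in\{q-1,-1\}$, so that for $\bfz\neq\oo$
\[
\widehat{1_{H_\lambda}}(\bfz)\;=\;\eta(\lambda^2-1)\,\mathcal G^d\,q^{-1}\big(q\cdot 1_{\tilde Q(\bfz)=0}-1\big),
\]
with $\mathcal G^d=\eta(-1)^{d/2}q^{d/2}$. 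The prefactor is precisely $\epsilon:=\eta((-1)^{d/2}(\lambda^2-1))\cdot q^{d/2-1}$, and here arithmetic enters the bound. If $\epsilon=-1$ (case (i)), the $\tilde Q(\bfz)=0$ contribution is a negative multiple of a nonnegative count, hence can be discarded, leaving only the $-1$ term, which yields the sharp bound $q^{d-1}|E|+q^{(d-2)/2}|E|^2$. If $\epsilon=+1$ (case (ii)), that contribution is positive; bounding it crudely by $|E|^2$ and keeping the $-q^{(d-2)/2}|E|^2$ remainder produces the stated extra term $q^{d/2}|E|^2-q^{(d-2)/2}|E|^2$. Throughout, $|H_\lambda|=1+(q-1)|X_\pi|$ from Lemma \ref{Lem1.6} shows the main term is bounded by $q^{d-1}|E|$ up to the correction already absorbed into the error.

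The routine work is the Gauss-sum bookkeeping; the main obstacle is the sign-tracking that distinguishes cases (i) and (ii). The key point is that the prefactor $\eta((-1)^{d/2}(\lambda^2-1))$ controls whether the potentially large term coming from $\{\bfz:\tilde Q(\bfz)=0\}$ enters with a favorable or unfavorable sign, and this is exactly the arithmetic–geometric dichotomy driving the statement. Getting the identity $\eta(\tfrac{\lambda^2-1}{\lambda^2})=\eta(\lambda^2-1)$ right, and correctly combining it with $\mathcal G^d=\eta(-1)^{d/2}q^{d/2}$ to produce the single symbol $\eta((-1)^{d/2}(\lambda^2-1))$, is where care is needed.
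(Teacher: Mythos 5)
Your proposal is correct and follows essentially the same route as the paper: expand $|\widehat{E}(\xi)|^2$, detect $H_\lambda$ by $q^{-1}\sum_t\chi(tQ(\xi))$, evaluate the $t\neq 0$ part by Gauss sums (Lemma \ref{lem:2.5}), and split on the parity of $d$ with the sign $\eta\bigl((-1)^{d/2}(\lambda^2-1)\bigr)$ deciding whether the $\tilde Q=0$ count can be discarded or must be bounded by $|E|^2$. The only difference is cosmetic bookkeeping (diagonal term $|H_\lambda||E|$ versus the paper's $t=0$ term $q^{d-1}|E|$), and your accounting reproduces the stated bounds exactly.
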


\begin{proof}
Set $\varepsilon= \frac{\lambda^2-1}{\lambda^2}.$ Then, by the definitions of the Fourier transform and the variety $H_\lambda,$   it follows that

\begin{align*} R_\lambda(E) =&\sum_{\xi\in \mathbb F_q^d: \xi^2_1+\cdots+ \xi_{d-1}^2+ \varepsilon \xi_d^2=0}  \sum_{\alpha, \beta\in E} \chi( (\alpha-\beta)\cdot \xi)\\
=& \sum_{\xi\in \mathbb F_q^d}  \left(q^{-1} \sum_{t\in \mathbb F_q} 
 \chi(t(\xi_1^2+ \cdots + \xi_{d-1}^2 +\varepsilon \xi_d^2)) \right) \sum_{\alpha, \beta\in E} \chi( (\alpha-\beta)\cdot \xi),
 \end{align*}
 where the orthogonality  relation of $\chi$ was also applied.
Separately computing $t=0$ and $t\ne 0$ in the sum over $t\in  \mathbb F_q$,   we obtain that
$$ R_\lambda(E) = q^{d-1} |E| +  q^{-1}\sum_{\alpha, \beta \in E}  \sum_{t\ne 0}   \sum_{\xi\in \mathbb F_q^d}   \chi(t(\xi_1^2+ \cdots + \xi_{d-1}^2 +\varepsilon \xi_d^2)) \chi( (\alpha-\beta)\cdot \xi).$$

Computing the sum in $\xi \in \mathbb F_q^d$ by invoking the Gauss sum estimates,  we get
\begin{equation}\label{FFE} R_\lambda(E) =q^{d-1} |E| + q^{-1} \eta(\varepsilon) \G^d  \sum_{\alpha, \beta \in E} \left( \sum_{t\ne 0} \eta^d(t)  \chi\left ( \frac{(\alpha_d-\beta_d)^2 -\varepsilon ||\alpha'-\beta'||}{4\varepsilon t}   \right) \right),\end{equation}
where we write $\alpha'$ for $(\alpha_1, \ldots, \alpha_{d-1}) \in \mathbb F_q^{d-1}.$ 

\textbf{Case 1:}  Assume that $d\ge 3$  is an odd integer.
Then  the sum over $t\ne 0$ in \eqref{FFE} is related to the Gauss sum and its absolute value is $\sqrt{q}.$  Since $|\eta|=1$ we  therefore obtain 
$R_\lambda(E) \le  q^{d-1}|E| +  q^{-1} q^{\frac{d}{2}} q^{\frac{1}{2}} |E|^2,$
which completes the proof of  Lemma \ref{Lem1.9} (iii).\\

\textbf{Case 2:}  Assume that $d\ge 4$ is even.
Since $\eta^d\equiv 1,$   it follows from \eqref{FFE} that
\begin{align*} R_\lambda(E) =& q^{d-1} |E| + q^{-1} \eta(\varepsilon)\G^d  \sum_{\alpha, \beta \in E} \left( \sum_{t\ne 0}  \chi\left ( \frac{(\alpha_d-\beta_d)^2 -\varepsilon ||\alpha'-\beta'||}{4\varepsilon} \cdot  \frac{1}{t}  \right) \right)\displaybreak[3]\\ 
=& q^{d-1} |E| + q^{-1} \eta(\varepsilon) \G^d \sum_{\substack{\alpha, \beta \in E: \\ (\alpha_d-\beta_d)^2 -\varepsilon ||\alpha'-\beta'||=0}}  (q-1) + q^{-1} \eta(\varepsilon) \G^d \sum_{\substack{\alpha, \beta \in E:\\ (\alpha_d-\beta_d)^2 -\varepsilon ||\alpha'-\beta'||\ne 0}}  (-1)\\
=& q^{d-1} |E| + \eta(\varepsilon) \G^d \sum_{\substack{\alpha, \beta \in E:\\ (\alpha_d-\beta_d)^2 -\varepsilon ||\alpha'-\beta'||= 0}}  1-q^{-1} \eta(\varepsilon) \G^d |E|^2.\end{align*}   
Since $\varepsilon= \frac{\lambda^2-1}{\lambda^2}$ and $\G^2=\eta(-1) q,$ 
 
\begin{equation}\label{Eq4K} R_\lambda(E)= q^{d-1} |E| + \eta(\lambda^2-1) (\eta(-1))^{\frac{d}{2}} q^{\frac{d}{2}} \sum_{\substack{\alpha, \beta \in E: \\ (\alpha_d-\beta_d)^2 -\varepsilon ||\alpha'-\beta'||= 0}}  1-\eta(\lambda^2-1) (\eta(-1))^{\frac{d}{2}} q^{\frac{d-2}{2}}|E|^2.\end{equation}
 
\textbf{Case 2.1:} Assume that $\eta\left( (-1))^{\frac{d}{2}} (\lambda^2-1) \right)=-1.$
It follows from \eqref{Eq4K} that
 
$$R_\lambda(E)= q^{d-1} |E| - q^{\frac{d}{2}} \sum_{\substack{\alpha, \beta \in E:\\  (\alpha_d-\beta_d)^2 -\varepsilon ||\alpha'-\beta'||= 0}}  1+ q^{\frac{d-2}{2}}|E|^2.$$
 
Since the second term above is negative,   Lemma \ref{Lem1.9} (i) follows.\\

\textbf{Case 2.2:}  Assume that $\eta\left( (-1))^{\frac{d}{2}} (\lambda^2-1) \right)=1.$ Then  we see from \eqref{Eq4K} that
 
\begin{align*} R_\lambda(E)&= q^{d-1} |E| + q^{\frac{d}{2}} \sum_{\substack{\alpha, \beta \in E:\\ (\alpha_d-\beta_d)^2 -\varepsilon ||\alpha'-\beta'||= 0}}  1- q^{\frac{d-2}{2}}|E|^2\\
& \le q^{d-1} |E| + q^{\frac{d}{2}} |E|^2- q^{\frac{d-2}{2}}|E|^2.\end{align*} 
This completes the proof of  Lemma \ref{Lem1.9} (ii). 
\end{proof}

\subsection{Extension estimates for spheres}

For each $j \in \F_q^\ast $, let $S^{d-1}_j$ be the sphere of radius $j$ in $\F^d_q$, defined by
\[ S^{d-1}_j = \{ \xx =(x_1,x_2,\ldots ,x_d) \in \F^d_q \colon \sum_{k=1}^d x_k^2 = j\}.\]
The following results are taken from \cite{I.D.08}. 




\begin{theorem}[Theorem 1, \cite{I.D.08}]\label{lem6.1}
    For $j \in \F_q^\ast $, $d \ge 2$, $a \ge \frac{2d+2}{d-1}$, then  
    \[  R_{S_j^{d-1}}^\ast (2 \rar a ) \ll 1.\]
\end{theorem}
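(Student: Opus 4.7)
The plan is to adapt the Stein--Tomas $TT^\ast$ argument to the finite-field setting, as is standard in the restriction/extension literature over $\F_q$. By duality, the extension estimate $R^\ast_{S_j^{d-1}}(2\rar a)\ll 1$ is equivalent to the $TT^\ast$ bound $\lV EE^\ast\rV_{L^{a'}\to L^{a}}\ll 1$, where $E$ is the extension operator, $E^\ast$ is the restriction operator, and $1/a+1/a'=1$; a direct computation shows that $EE^\ast$ is convolution with the Fourier transform of the normalized surface measure $d\sigma_{S_j^{d-1}}:=\mathbf{1}_{S_j^{d-1}}/|S_j^{d-1}|$. Thus the task reduces to showing
\[ \lV f\ast \widehat{d\sigma_{S_j^{d-1}}}\rV_{a}\ll \lV f \rV_{a'}\qquad \text{for every } a\ge \tfrac{2(d+1)}{d-1}. \]

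First I would compute $\widehat{d\sigma_{S_j^{d-1}}}$ explicitly. Starting from the orthogonality identity $\mathbf{1}_{S_j^{d-1}}(\xx)=q^{-1}\sum_{t\in\F_q}\chi(t(x_1^2+\cdots+x_d^2-j))$, separating the $t=0$ contribution, and applying Lemma~\ref{lem:2.5} coordinate-wise, one obtains a formula of the form
\[ \widehat{d\sigma_{S_j^{d-1}}}(\xi)=\mathbf{1}_{\xi=\oo}+E(\xi), \]
where the oscillatory remainder $E(\xi)$ is a Gauss/Kloosterman-type sum satisfying the uniform bound $|E(\xi)|\ll q^{-(d-1)/2}$ for every $\xi\in\F_q^d$. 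This is the finite-field analogue of the classical $|\xi|^{-(d-1)/2}$ decay of the Fourier transform of Euclidean surface measure, and is the only piece of ``geometry'' the proof uses.

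Next I would carry out the Stein--Tomas interpolation. The $\mathbf{1}_{\xi=\oo}$ term contributes an $O(1)$ operator from $L^{a'}$ to $L^{a}$ for all $a\ge 2$ via Plancherel and H\"older, so it is harmless. For the oscillatory piece, the convolution operator $f\mapsto f\ast E$ admits two endpoint estimates: (i) an $L^1\rar L^\infty$ bound of size $q^{-(d-1)/2}$ directly from the uniform decay of $E$; and (ii) an $L^2\rar L^2$ bound of size $O(1)$ from Plancherel together with the pointwise bound $|\widehat{E}|\ll 1$ on $\F_q^d$ (which follows from $|S_j^{d-1}|\ll q^{d-1}$ after normalization). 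Riesz--Thorin interpolation between these two endpoints, at interpolation parameter $\theta=\frac{d-1}{d+1}$, precisely balances the decay gain $q^{-\theta(d-1)/2}$ against the normalization powers of $q$ and yields the sought bound at the Stein--Tomas exponent $a=\frac{2(d+1)}{d-1}$; the bound for larger $a$ then follows by a further interpolation against the trivial $L^2\rar L^\infty$ estimate.

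The principal obstacle is the bookkeeping of normalization constants: the inequality \eqref{extension} carries specific powers of $|S_j^{d-1}|\sim q^{d-1}$ on each side, and one must verify that the powers of $q$ produced by (a) the surface-measure normalization, (b) the $q^{-(d-1)/2}$ decay of $E$, and (c) the interpolation weight $\theta$, combine to an absolute constant independent of $q$ at the critical exponent. A secondary subtlety arises in dimension $d=2$, where the ``sphere'' is a conic that degenerates along an isotropic line when $-1$ is a square in $\F_q$; the Gauss-sum analysis then needs to isolate the contribution from this isotropic locus, but the resulting extra term is still absorbed at the critical exponent $a\ge 6$.
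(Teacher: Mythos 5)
Your overall route is the right one: the paper itself does not prove this statement (it is imported verbatim from Iosevich--Koh \cite{I.D.08}), and the proof in that reference is exactly the finite-field Stein--Tomas argument you describe — write $\widehat{d\sigma}=\mathbf{1}_{\xi=\oo}+E$, prove the decay $|E(\xi)|\ll q^{-(d-1)/2}$ via Gauss sums and the Weil bound for the resulting Kloosterman/Sali\'e sums (this is where $j\in\F_q^\ast$ is used), and interpolate in the $TT^\ast$ framework.

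There is, however, a genuine error in your endpoint (ii). With the counting-measure normalization forced by the $TT^\ast$ reduction of the extension inequality, the $L^2\to L^2$ norm of convolution with $E$ is $\sup_\xi|\widehat{E}(\xi)|$, and $\widehat{E}$ is (up to reflection) $\frac{q^d}{|S_j^{d-1}|}\mathbf{1}_{S_j^{d-1}}-1$, whose supremum is of size $q^d/q^{d-1}\sim q$, not $O(1)$; your claim that $|\widehat{E}|\ll 1$ ``after normalization'' conflates the normalized and unnormalized Fourier transforms. As written, your two endpoints ($q^{-(d-1)/2}$ for $L^1\to L^\infty$ and $O(1)$ for $L^2\to L^2$) would interpolate to $R^\ast_{S_j^{d-1}}(2\rar a)\ll 1$ for \emph{every} $a\ge 2$, which is false: testing $f=\mathbf{1}_{S_j^{d-1}}$ gives $\lV (f d\sigma)^\vee\rV_{L^2(dm)}^2=q^d/|S_j^{d-1}|\sim q$ while $\lV f\rV_{L^2(d\sigma)}=1$, so $R^\ast(2\rar 2)\gtrsim q^{1/2}$. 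The critical exponent $a=\frac{2d+2}{d-1}$ emerges precisely from balancing the factor $q^{1-\theta}$ coming from the correct $L^2\to L^2$ bound against the gain $q^{-\theta(d-1)/2}$ from the decay; once you make that correction the argument is the proof in \cite{I.D.08}. Two smaller points: the $d=2$ degeneracy you worry about does not occur, since for $j\ne 0$ the conic $x_1^2+x_2^2=j$ contains no lines even when $-1$ is a square and the same Kloosterman bound applies; and the passage to exponents $a$ above the critical one needs no further interpolation, as it follows at once from the monotonicity of $\ell^p$ norms with respect to counting measure on $\F_q^d$.
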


\begin{theorem}[Theorem 2, \cite{I.D.08}]\label{lem6.2}
    For $j \in \F_q^\ast $, $d \ge 2$, then  
    \[ R_{S_j^{d-1}}^\ast (2\rar 4) \ll 1. \]
\end{theorem}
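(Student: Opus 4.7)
The plan is to unfold the $L^2\to L^4$ extension inequality, invoke Plancherel to rewrite the left-hand side as a convolution norm, and then reduce everything to a uniform sphere--hyperplane incidence count that can be handled by Gauss sums.

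Let $g := f\cdot \mathbf{1}_{S_j^{d-1}}$, extended by zero to $\mathbb{F}_q^d$. Raising the target inequality to the fourth power shows $R_{S_j^{d-1}}^\ast(2\rar 4)\ll 1$ is equivalent to
\[
\sum_{\xi\in \mathbb{F}_q^d}|\widehat{g}(\xi)|^4\;\le\; C\,|S_j^{d-1}|^{2}\,\|g\|_{\ell^2(\mathbb{F}_q^d)}^{4}.
\]
Using the identity $|\widehat g|^2=\widehat{g\ast g^\ast}$ with $g^\ast(x):=\overline{g(-x)}$, Plancherel recasts the left-hand side as $q^d\sum_{x\in \mathbb{F}_q^d}|(g\ast g^\ast)(x)|^2$. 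I would then split this sum into $x=0$ and $x\ne 0$: the $x=0$ contribution equals $q^d\,\|g\|_{\ell^2}^4$, which is absorbed into the target bound since $|S_j^{d-1}|\asymp q^{d-1}$ and $d\ge 2$.

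For $x\ne 0$, applying Cauchy--Schwarz to $(g\ast g^\ast)(x)=\sum_{y,\, y-x\in S_j^{d-1}}f(y)\overline{f(y-x)}$ and then summing the resulting bound over $x$ (using the telescoping identity $\sum_x\sum_{y,y-x\in S_j^{d-1}}|f(y)|^2|f(y-x)|^2=\|g\|_{\ell^2}^4$) reduces everything to a uniform pointwise estimate
\[
\nu(x)\;:=\;\#\bigl\{\,y\in S_j^{d-1}\colon\; y-x\in S_j^{d-1}\,\bigr\}\;\ll\;q^{d-2}\qquad(x\ne 0).
\]
Since $q$ is odd, the pair of constraints $\|y\|=\|y-x\|=j$ is equivalent to $\|y\|=j$ together with the linear condition $y\cdot x=\|x\|/2$, so $\nu(x)$ counts the points of $S_j^{d-1}$ lying on a prescribed affine hyperplane. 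Expanding this count via orthogonality in two dual variables $s,t$ and evaluating the resulting quadratic-plus-linear character sums through Lemma \ref{lem:2.5} (together with $|\G|=\sqrt q$) produces an exact expression of the form $q^{d-2}+O(q^{(d-1)/2})$, which is $O(q^{d-2})$ for $d\ge 2$ and closes the estimate.

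The main obstacle is the isotropic sub-case $\|x\|=0$ with $x\ne 0$, where the inner Gaussian quadratic sum degenerates into a linear character sum and the direct application of Lemma \ref{lem:2.5} is unavailable. Here one reorganizes the orthogonality expansion: the $s$-sum collapses by the linear orthogonality relation, leaving a single Gauss sum in $t$ of absolute value $\sqrt q$, which produces a residual error of size $O(q^{(d-2)/2})$, still safely below the main $q^{d-2}$ term for $d\ge 2$.
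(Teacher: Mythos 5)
The paper does not prove this statement: Theorem \ref{lem6.2} is imported verbatim from Iosevich--Koh \cite{I.D.08}, so there is no in-paper argument to compare against. Your proposal is essentially the standard (and essentially the cited source's) proof of the finite-field $L^2\to L^4$ sphere extension estimate, and it is correct: unfolding the inequality to $\sum_{\xi}|\widehat g(\xi)|^4\le C|S_j^{d-1}|^2\|g\|_{\ell^2}^4$, passing by Plancherel to $q^d\sum_x|(g\ast g^\ast)(x)|^2$, peeling off the diagonal $x=0$ (which needs exactly $d\ge 2$), and then using Cauchy--Schwarz to reduce the off-diagonal part to the uniform incidence bound $\nu(x)=\#\{y\in S_j^{d-1}:y-x\in S_j^{d-1}\}\ll q^{d-2}$ for $x\ne 0$ is precisely the right skeleton, and the reduction of $\nu(x)$ to a sphere--hyperplane count via $\|y\|=\|y-x\|=j\iff \|y\|=j,\ y\cdot x=\|x\|/2$ is valid since $q$ is odd.

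One small bookkeeping point, not a gap: your blanket claim that the count is ``$q^{d-2}+O(q^{(d-1)/2})$, which is $O(q^{d-2})$ for $d\ge 2$'' is literally false at $d=2$, since $q^{1/2}\not\ll q^{0}$. If you actually carry out the orthogonality computation with Lemma \ref{lem:2.5}, the size of the error depends on the parity of $d$ (whether $\eta^{d+1}$ is trivial): for $d$ even it is $O(q^{(d-2)/2})$ and for $d$ odd it is $O(q^{(d-1)/2})$, which is $\le q^{d-2}$ in every case with $d\ge 2$; alternatively, for $d=2$ one can simply note that a line meets the nondegenerate conic $\|y\|=j\ne 0$ in at most two points, so $\nu(x)\le 2$. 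With that case distinction made explicit (and your isotropic case $\|x\|=0$ handled as you describe), the argument closes.
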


\section{Bounding $\T_1(X, E, N)$ for $\lambda \ne \{0, \pm 1\}$ (Theorem \ref{thm1.3})}
Let $E\subset \mathbb F_q^d$ and $X\subset G(d, m).$   
\begin{theorem}\label{lem1K} Let $\T_1 \l(X, E,N\r)$ be defined as in \eqref{L1}.   Then we have
$$  \left\vert \mathcal{T}_1\l(X, E, N\r)\right\vert \le \frac{N}{\l( q^m-N\r)|E|^2}  \sum_{V\in \T_1(X, E,N)} \sum_{\xi \in V\setminus \{0\}} |\widehat{E}(\xi)|^2,$$ 
where $\widehat{E}(\xi)= \sum\limits_{\mathbf{x}\in E} \chi(-\mathbf{x}\cdot \xi)$ for $\xi\in \mathbb F_q^d.$
\end{theorem}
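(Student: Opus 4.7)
The plan is to establish the pointwise inequality
$$1 \;\le\; \frac{N}{(q^m-N)|E|^2}\sum_{\xi\in V\setminus\{\mathbf{0}\}}|\widehat{E}(\xi)|^2$$
for every individual $V\in\mathcal{T}_1(X,E,N)$, and then sum the resulting inequalities over $V$ to obtain the statement. The argument for a fixed $V$ proceeds by combining a Cauchy--Schwarz step on the coset decomposition with the Fourier identity of Lemma \ref{Lem3.1KP}.

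First, I would fix $V\in \mathcal{T}_1(X,E,N)$ and enumerate the disjoint cosets $\{\mathbf{x}_j+V^\perp\}_{j=1}^{q^m}$ of $V^\perp$ (there are $q^m$ of them since $\dim V^\perp = d-m$). By the very definition of $\pi_V$, at most $N$ of these cosets have non-empty intersection with $E$, and $|E|$ splits as the sum of the non-zero intersection sizes. Applying Cauchy--Schwarz restricted to the at most $N$ non-vanishing terms yields
$$|E|^2 \;\le\; N\sum_{j=1}^{q^m}\bigl|E\cap(\mathbf{x}_j+V^\perp)\bigr|^2.$$

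Next, I would apply Lemma \ref{Lem3.1KP} with the subspace $W=V^\perp\in G(d,d-m)$ in the role of the lemma's $W$. The lemma rewrites the right-hand side as a Fourier sum over $(V^\perp)^\perp=V$:
$$\sum_{j=1}^{q^m}\bigl|E\cap(\mathbf{x}_j+V^\perp)\bigr|^2 \;=\; q^{-m}\sum_{\xi\in V}|\widehat{E}(\xi)|^2.$$
Isolating the contribution of $\xi=\mathbf{0}$, which equals $|\widehat{E}(\mathbf{0})|^2=|E|^2$, and rearranging, the hypothesis $N<q^m$ gives
$$(q^m-N)|E|^2 \;\le\; N\sum_{\xi\in V\setminus\{\mathbf{0}\}}|\widehat{E}(\xi)|^2,$$
which is exactly the desired pointwise bound after dividing by $(q^m-N)|E|^2$. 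Summing over $V\in\mathcal{T}_1(X,E,N)$ then produces the stated estimate.

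The argument is essentially formal once the two ingredients are in place; the only mild subtlety is the dimensional bookkeeping in Lemma \ref{Lem3.1KP} when passing between $V$ and $V^\perp$ and correctly identifying the Fourier side with $V$ via $(V^\perp)^\perp=V$. Since the lemma's hypotheses are transparent and the Cauchy--Schwarz step uses only the defining inequality $|\pi_V(E)|\le N$, I do not anticipate any genuine obstacle.
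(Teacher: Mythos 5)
Your argument is correct and is essentially the same as the paper's proof: both apply Cauchy--Schwarz to the coset decomposition using $|\pi_V(E)|\le N$, invoke Lemma \ref{Lem3.1KP} with $W=V^\perp$ to pass to the Fourier sum over $V$, separate the $\xi=\mathbf{0}$ term, and sum the resulting pointwise bound over $V\in\mathcal{T}_1(X,E,N)$. The only (harmless) presentational difference is that you state the implicit hypothesis $N<q^m$ explicitly, which the paper leaves tacit.
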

\begin{proof}
    For $V \in G(d,m)$, let $\{ \xx_{V^\perp_j} +V^\perp \}_{j=1}^{q^m}$ be the set of disjoint translations of $V^\perp \subset \mathbb F_q^d$. From definition of $\mathcal{T}_1(X, E, N)$ and the Cauchy-Schwarz inequality, we have 
    \begin{align*}
        |E|^2  = \left( \sum_{j=1}^{q^m} |E\cap (\xx_{V_j^\perp}+V^\perp)| \right)^2 & \le |\pi_V (E)|\left( \sum_{j=1}^{q^m} \left\vert E\cap (\xx_{V_j^\perp}+V^\perp) \right\vert^2 \right) \\
        & \le N \left( \sum_{j=1}^{q^m} \left\vert E\cap (\xx_{V_j^\perp}+V^\perp) \right\vert^2 \right) ,
    \end{align*}
    for all $V\in \T_1 \l( X,E,N\r)$.
    By this inequality and Lemma \ref{Lem3.1KP}, we have 
    \begin{align*}
        &\frac{\sum_{\xi \in V\setminus \{ 0\}}|\widehat{E}(\xi)|^2}{|E|^2}  = \frac{\sum_{\xi \in V}|\widehat{E}(\xi )|^2}{|E|^2} -1 = \frac{q^m\left( \sum_{j=1}^{q^m} \left\vert E\cap (x_{V_j^\perp} +V^\perp) \right\vert^2 \right)}{|E|^2} -1 \\
        & \ge \frac{q^m}{N} - 1 = \frac{q^m-N}{N},
    \end{align*}
    for all $V \in \T_1(X,E,N)$.
    Therefore, taking the sum over all $V \in \Tca_1 (X,E,N)$ gives 
    \[ \lv \mathcal{T}_1(X,E,N)\rv \le \frac{N}{\l( q^m-N\r) |E|^2} \sum_{V\in \T_1(X, E,N)} \sum_{\xi \in V\setminus \{0\}} |\widehat{E}(\xi)|^2 
     . \]
     This completes the proof.
\end{proof}
The next theorem shows that we can bound the size of $  \mathcal{T}_1\l(X_\pi, E, N\r)$ by using restriction estimates for the cone-type variety $H_\lambda$. 
\begin{theorem}\label{P1.4}  Let $E\subset \mathbb F_q^d.$  Then we have

$$ \lv \mathcal{T}_1\l(X_\pi, E, N\r) \rv \le \frac{N}{(q-N)|E|^2} \sum_{\xi\in H_\lambda} |\widehat{E}(\xi)|^2.$$

\end{theorem}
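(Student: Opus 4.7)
The plan is to reduce Theorem \ref{P1.4} to Theorem \ref{lem1K} applied with $X=X_\pi$ and $m=1$ (since $X_\pi\subset G(d,1)$), and then absorb the double sum into a single Fourier sum over the cone-type variety $H_\lambda$.

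First, I would apply Theorem \ref{lem1K} directly with $m=1$ to obtain
\[
|\mathcal{T}_1(X_\pi,E,N)| \;\le\; \frac{N}{(q-N)|E|^2}\sum_{V\in \T_1(X_\pi,E,N)}\sum_{\xi\in V\setminus\{\oo\}}|\widehat{E}(\xi)|^2.
\]
The whole task is then to bound the right-hand side by the promised sum over $H_\lambda$. Since any two distinct lines through the origin in $\F_q^d$ meet only at $\oo$, the sets $V\setminus\{\oo\}$ indexed by $V\in X_\pi$ are pairwise disjoint, so I can rewrite the double sum as a single sum over $\bigsqcup_{V\in \T_1(X_\pi,E,N)}(V\setminus\{\oo\})$. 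Enlarging the index set from $\T_1(X_\pi,E,N)$ to all of $X_\pi$ (which only increases a nonnegative quantity) and then including $\oo$, it suffices to show the containment
\[
\bigcup_{V\in X_\pi}(V\setminus\{\oo\}) \;\subseteq\; H_\lambda.
\]

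To verify this containment, I would take any line $V\in X_\pi$, which by definition passes through the origin and some $\yy=(y_1,\dots,y_{d-1},\lambda)\in \pi\cap S^{d-1}$, so that $y_1^2+\cdots+y_{d-1}^2=1-\lambda^2$. Any $\xx\in V$ has the form $\xx=s\yy$ for some $s\in \F_q$, and then
\[
x_1^2+\cdots+x_{d-1}^2+\tfrac{\lambda^2-1}{\lambda^2}x_d^2
\;=\; s^2(1-\lambda^2)+\tfrac{\lambda^2-1}{\lambda^2}(s\lambda)^2
\;=\; s^2(1-\lambda^2)+s^2(\lambda^2-1)\;=\;0,
\]
so $\xx\in H_\lambda$. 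Chaining the inequalities yields
\[
\sum_{V\in \T_1(X_\pi,E,N)}\sum_{\xi\in V\setminus\{\oo\}}|\widehat{E}(\xi)|^2
\;\le\; \sum_{\xi\in H_\lambda}|\widehat{E}(\xi)|^2,
\]
which combined with the first display gives the theorem.

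There is no real obstacle here: the argument is essentially bookkeeping. The only thing to be careful about is making sure the lines in $X_\pi$ are genuinely distinct as subsets, so that the disjoint-union step carries no multiplicity loss, and that the containment into $H_\lambda$ is strict in general (e.g., $H_\lambda$ also contains isotropic points on the hyperplane $\{x_d=0\}$, which is fine since we only need an upper bound).
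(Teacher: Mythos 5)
Your argument is correct and is essentially the paper's own proof: both apply Theorem \ref{lem1K}, enlarge the sum from $\T_1(X_\pi,E,N)$ to $X_\pi$, and then use exactly the two facts the paper isolates as Claims 1 and 2 (each $V\in X_\pi$ lies in $H_\lambda$, and distinct lines of $X_\pi$ meet only at $\oo$, so there is no overcounting). Nothing further is needed.
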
 

\begin{proof} By Theorem \ref{lem1K}, it follows that

$$\lv \mathcal{T}_1\l(X_\pi, E, N\r)\rv \le \frac{N}{(q-N)|E|^2}  \sum_{V\in \T_1(X_\pi, E,N)} \sum_{\xi \in V\setminus \{0\}} |\widehat{E}(\xi)|^2,$$ 

 Since $\mathcal{T}_1\l( X_\pi, E,  N \r) \subset X_\pi=[\Pi_\lambda\cap S^{d-1}],$ 

$$ \lv \mathcal{T}_1\l( X_\pi, E, N\r) \rv \le \frac{N}{(q-N)|E|^2}  \sum_{V\in X_\pi} \sum_{\xi \in V\setminus \{0\}} |\widehat{E}(\xi)|^2 .$$

Notice that to complete the proof, it is enough to show that  the two claims  below hold:
\vskip0.5em 
\noindent {\bf Claim 1:} $V\subset H_\lambda$ for all $V\in X_\pi$.
\vskip0.5em 
Indeed, let $V\in X_\pi.$ Then $V$ is a line passing through the origin and a point $\mathbf{x}$ in $\pi \cap S^{d-1}.$  In other words, we have
$V=\{s \xx: s\in \mathbb F_q\}$ where $x_1^2+ \cdots+x_{d-1}^2+\lambda^2-1=0$ and $x_d=\lambda.$
Since $(sx_1)^2+ \cdots+ (s x_{d-1})^2 + \frac{\lambda^2-1}{\lambda^2} (sx_d)^2=0$ for all $s\in \mathbb F_q,$  we see that $V\subset H_\lambda.$ Thus,  the claim  holds true.   
\vskip0.5em 
\noindent {\bf Claim 2:} If $V, W \in X_\pi$ with $V\ne W$,  then  $V\cap W=\{(0, \ldots, 0)\}.$
\vskip0.5em 
Indeed, suppose that $V, W\in X_\pi$ with $V\ne W.$ Then for some $\mathbf{x}=(x_1, \ldots, x_{d-1}, \lambda), ~ \mathbf{y}=(y_1, \ldots, y_{d-1}, \lambda)\in \mathbb F_q^d$ with $\mathbf{x}\ne \mathbf{y}$,   we can write
$$ V=\{ s(x_1, \ldots, x_{d-1}, \lambda): s\in \mathbb F_q\} ,$$
and $$W=\{ t(y_1, \ldots, y_{d-1}, \lambda): t\in \mathbb F_q\}.$$
It is clear that  $\oo \in V\cap W.$ Now, let us assume that $\oo \ne \gamma\in V\cap W.$ Then for some $s, t\in \mathbb F_q^*$   we have
$$s(x_1, \ldots, x_{d-1}, \lambda) = t(y_1, \ldots, y_{d-1}, \lambda).$$
Since $\lambda\ne 0,$  we see that $\mathbf{x}=\mathbf{y},$ which makes a contradiction.  In conclusion,   $V\cap W=\{ \oo \},$   as required. 
\end{proof}

\begin{corollary} \label{cor_Eq2} Let $E \subset \F_q^d$. Then, we have

 $$\lv \mathcal{T}_1\l(X_\pi, E, N\r) \rv \le \frac{Nq^d}{(q-N)|E|}.$$  
\end{corollary}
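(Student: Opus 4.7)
The plan is to combine Theorem \ref{P1.4} with the Plancherel identity recalled in the Preliminaries. Since Theorem \ref{P1.4} already gives
\[ \lv \mathcal{T}_1\l(X_\pi, E, N\r) \rv \le \frac{N}{(q-N)|E|^2} \sum_{\xi\in H_\lambda} |\widehat{E}(\xi)|^2,\]
it suffices to produce a crude trivial upper bound on the Fourier energy $\sum_{\xi \in H_\lambda} |\widehat{E}(\xi)|^2$, with no refined restriction estimate required.

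First, I would observe that $H_\lambda \subset \mathbb{F}_q^d$, so one can trivially enlarge the summation to all of $\mathbb{F}_q^d$:
\[ \sum_{\xi \in H_\lambda} |\widehat{E}(\xi)|^2 \;\le\; \sum_{\xi \in \mathbb{F}_q^d} |\widehat{E}(\xi)|^2 .\]
Next, viewing $E$ as its indicator function $\mathbf{1}_E$ (so that $\widehat{E}(\xi)=\sum_{\xx \in E}\chi(-\xi\cdot \xx) = \widehat{\mathbf{1}_E}(\xi)$), the Plancherel identity recorded in Section 2 gives
\[ \sum_{\xi \in \mathbb{F}_q^d} |\widehat{E}(\xi)|^2 \;=\; q^d \sum_{\xx \in \mathbb{F}_q^d} |\mathbf{1}_E(\xx)|^2 \;=\; q^d |E|.\]

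Substituting this into the bound from Theorem \ref{P1.4} immediately yields
\[ \lv \mathcal{T}_1\l(X_\pi, E, N\r) \rv \le \frac{N}{(q-N)|E|^2}\cdot q^d |E| \;=\; \frac{N q^d}{(q-N)|E|},\]
which is exactly the claimed bound. There is no real obstacle here: the whole point of the corollary is to record what the framework of Theorem \ref{P1.4} produces when one discards all arithmetic/restriction information about $H_\lambda$ and uses only Plancherel. The sharper Theorems \ref{thm1.3} and \ref{p11} will then arise by replacing this trivial step with the genuine restriction estimates for $H_\lambda$ proved in Lemma \ref{Lem1.9}.
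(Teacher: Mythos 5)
Your proposal is correct and matches the paper's own proof: the paper likewise invokes Theorem \ref{P1.4}, uses $H_\lambda \subset \mathbb{F}_q^d$ to enlarge the frequency sum, and applies Plancherel to get $\sum_{\xi\in \mathbb{F}_q^d}|\widehat{E}(\xi)|^2 = q^d|E|$. No issues.
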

\begin{proof}
      Since $H_\lambda \subset \mathbb F_q^d,$  it is clear from Theorem \ref{P1.4}  that

 $$ \lv \mathcal{T}_1\l( X_\pi, E, N\r) \rv \le \frac{N}{(q-N)|E|^2} \sum_{\xi\in \mathbb F_q^d} |\widehat{E}(\xi)|^2.$$ 

By the Plancherel theorem,    $\sum_{\xi\in \mathbb F_q^d} |\widehat{E}(\xi)|^2=q^d |E|.$ 
\end{proof}

\begin{proof}[Proof of Theorem \ref{thm1.3}]
Combining Theorem \ref{P1.4}  and Lemma \ref{Lem1.9},  we obtain the following facts:

\begin{itemize}
\item [(a)] If $d\ge 3$ is odd, then  
 $\lv \T_1\l( X_\pi, E,N\r) \rv\le \dfrac{Nq^{d-1}}{(q-N)|E|} + \dfrac{Nq^{\frac{d-1}{2}}}{(q-N)}.$ 
 
\item [(b)] If $d\ge 4$ is even, and $\eta\left( (-1))^{\frac{d}{2}} (\lambda^2-1) \right)=-1,$ then
 $ \lv \T_1 \l(X_\pi, E,N\r) \rv \le \dfrac{Nq^{d-1}} {(q-N)|E|} + \dfrac{Nq^{\frac{d-2}{2}}}{q-N} .$ 

\item [(c)] If $d\ge 4$ is even, and $\eta\left( (-1))^{\frac{d}{2}} (\lambda^2-1) \right)=1,$ then  
$$ \lv \T_1 \l( X_\pi, E,N \r) \rv \le \dfrac{Nq^{d-1}}{(q-N)|E|} + \dfrac{N\l( q^{\frac{d}{2}} -q^{\frac{d-2}{2}} \r)}{q-N} .$$ 
\end{itemize}


These bounds complete the proof.
\end{proof}
\begin{corollary}\label{cor3.6}
    For $\epsilon>0$, let  $E\subset \mathbb F_q^d, 0 <N <q$. Then the number of subspaces  $V\in X_\pi\subset G(d, 1)$  such that  $|\pi_V(E)| >  N$ is more than $(1-\epsilon)|X_\pi|$,
provided that one of the following conditions holds: 
\begin{itemize} \item[(i)] $d\ge 4$ is even,  $\eta\left( (-1))^{\frac{d}{2}} (\lambda^2-1) \right)=-1,$ and  $|E| > \dfrac{Nq^{d-1}}{\epsilon (q-N)q^{d-2}-\epsilon q^{\frac{d}{2}} -(1-\epsilon)Nq^{\frac{d-2}{2}}}.$
\item [(ii)] $d\ge 4$ is even,  $\eta\left( (-1))^{\frac{d}{2}} (\lambda^2-1) \right)=1,$ and  $|E| >  \dfrac{Nq^{d-1}}{\epsilon(q-N)q^{d-2}-(N-\epsilon)q^{\frac{d}{2}}+(1-\epsilon)Nq^{\frac{d-2}{2}}}.$
\item [(iii)]  $d\ge 5$ is odd, and $|E|> \dfrac{Nq^{d-1}}{\epsilon (q-N)q^{d-2} - (N+\epsilon)q^{\frac{d-1}{2}} + \epsilon Nq^{\frac{d-3}{2}}\l( \eta (-1) \r)^{\frac{d-1}{2}}}.$ 

\item [(iv)] $d=3$  and $|E| > \dfrac{Nq^3}{\epsilon (q-N)(q-\eta(-1))}.$
\end{itemize}
\end{corollary}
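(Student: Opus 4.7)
The plan is a direct computation: apply the upper bound of Theorem \ref{thm1.3} together with the exact cardinality of $X_\pi$ from Lemma \ref{Lem1.6}, then isolate $|E|$. Observing that the conclusion ``more than $(1-\epsilon)|X_\pi|$ directions $V\in X_\pi$ satisfy $|\pi_V(E)|>N$'' is equivalent to
\[
|\mathcal{T}_1(X_\pi, E, N)| \;<\; \epsilon\,|X_\pi|,
\]
it suffices to push the right-hand side of Theorem \ref{thm1.3} below $\epsilon|X_\pi|$.

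In each of Theorem \ref{thm1.3}(a)--(c), the bound splits as
\[
|\mathcal{T}_1(X_\pi, E, N)| \;\le\; \frac{Nq^{d-1}}{(q-N)|E|} \;+\; \Phi,
\]
where $\Phi$ collects the term independent of $|E|$ (explicitly $\tfrac{Nq^{(d-1)/2}}{q-N}$, $\tfrac{Nq^{(d-2)/2}}{q-N}$, or $\tfrac{N(q^{d/2}-q^{(d-2)/2})}{q-N}$ in the respective cases). Plugging in the closed-form value of $|X_\pi|$ from Lemma \ref{Lem1.6} and solving for $|E|$ gives
\[
|E| \;>\; \frac{Nq^{d-1}}{(q-N)\bigl(\epsilon\,|X_\pi| - \Phi\bigr)},
\]
valid whenever the denominator is positive. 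A case-by-case expansion recovers the four displayed inequalities: for instance, in Case (i), $|X_\pi|=q^{d-2}-q^{(d-2)/2}$ and $\Phi=\tfrac{Nq^{(d-2)/2}}{q-N}$, whose expansion produces $\epsilon(q-N)q^{d-2}-\epsilon q^{d/2}-(1-\epsilon)Nq^{(d-2)/2}$, exactly matching Case (i); the remaining cases are analogous mechanical substitutions.

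The only genuine task is careful bookkeeping of signs through the quadratic character $\eta((-1)^{d/2}(\lambda^2-1))$ in Cases (i)--(ii) and through $(\eta(-1))^{(d-1)/2}$ in odd-dimensional Cases (iii)--(iv); the small-dimensional specialization $d=3$ collapses several $q$-powers and produces the compact form stated in Case (iv). No new analytic input is needed beyond Theorem \ref{thm1.3} and Lemma \ref{Lem1.6}, and the stated lower bound on $|E|$ is precisely the condition that makes the denominator positive so that the inequality is non-vacuous.
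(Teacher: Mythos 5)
Your cases (i)--(iii) are exactly the paper's argument: combine Theorem \ref{thm1.3} with the closed-form $|X_\pi|$ from Lemma \ref{Lem1.6}, demand that the bound be below $\epsilon|X_\pi|$, and solve for $|E|$; your expansion in case (i) reproduces the stated denominator correctly, and (ii)--(iii) go the same way (which is also why (iii) is stated only for $d\ge 5$).

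However, case (iv) does not follow from your scheme, and this is a genuine gap. If you specialize Theorem \ref{thm1.3}(a) to $d=3$, the $|E|$-independent term is $\Phi=\frac{Nq^{(d-1)/2}}{q-N}=\frac{Nq}{q-N}$, while $|X_\pi|=q-\eta(-1)\sim q$. For many admissible $N$ (e.g.\ $N$ a fixed proportion of $q$ and $\epsilon$ small) one has $\Phi\ge \epsilon|X_\pi|$, so the inequality
\[
\frac{Nq^{2}}{(q-N)|E|}+\frac{Nq}{q-N}<\epsilon\bigl(q-\eta(-1)\bigr)
\]
cannot be satisfied for any $|E|$; and even when it can, solving it gives the threshold $|E|>\frac{Nq^{2}}{\epsilon(q-N)(q-\eta(-1))-Nq}$, which is not the stated bound $|E|>\frac{Nq^{3}}{\epsilon(q-N)(q-\eta(-1))}$. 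The paper avoids this by \emph{not} using the restriction estimate at $d=3$: it invokes Corollary \ref{cor_Eq2}, the Plancherel-only bound $|\mathcal{T}_1(X_\pi,E,N)|\le \frac{Nq^{d}}{(q-N)|E|}$, whose single term (with numerator $Nq^{3}$) yields case (iv) directly. So your proof needs this extra input for $d=3$; the phrase ``the small-dimensional specialization collapses several $q$-powers'' does not produce the stated inequality. A second, minor point: the stated lower bounds on $|E|$ do not by themselves guarantee positivity of the denominators $\epsilon|X_\pi|-\Phi$; that positivity is an implicit standing assumption (otherwise the hypothesis is vacuous), not a consequence of the bound on $|E|$ as your last sentence suggests.
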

\begin{proof}

First recall that  $\T_1 \l(X_\pi, E, N\r)=\lo V\in X_\pi: |\pi_V(E)| \le N\ro.$
It is clear from the definition of $\T_1 (X_\pi, E,N)$ that 
the condition  $\lv \T_1\l( X_\pi, E,N\r) \rv < \epsilon|X_\pi|$  implies the conclusion of Corollary \ref{cor3.6}. 

Recall Lemma \ref{Lem1.6}:

\begin{itemize}
\item [$(3.1-1)$] If $d\ge 4$ is even, and $\eta\left( (-1))^{\frac{d}{2}} (\lambda^2-1) \right)=-1,$ then 
$|X_\pi| = q^{d-2} -q^{\frac{d-2}{2}} .$
\item [$(3.1-2)$] If $d\ge 4$ is even, and $\eta\left( (-1))^{\frac{d}{2}} (\lambda^2-1) \right)=1,$ then 
$|X_\pi| = q^{d-2} +q^{\frac{d-2}{2}}.$
 \item [$(3.1-3)$] If $d\ge 3$ is odd, then  
$|X_\pi| = q^{d-2} -q^{\frac{d-3}{2}} (\eta(-1))^{\frac{d-1}{2}}.$ 
\end{itemize}
Combining $(3.1-1)$ and Theorem \ref{thm1.3} (b) imply 
\begin{eqnarray*}
    &&\frac{Nq^{d-1}}{(q-N)|E|}+\frac{Nq^{\frac{d-2}{2}}}{q-N} < \epsilon \l( q^{d-2} - q^{\frac{d-2}{2}} \r) \\
    &\Longleftrightarrow & \frac{Nq^{d-1}}{\epsilon (q-N)q^{d-2} -Nq^{\frac{d-2}{2}}-\epsilon (q-N)q^{\frac{d-2}{2}}} < |E| \\
    &\Longleftrightarrow & \frac{Nq^{d-1}}{\epsilon (q-N)q^{d-2}-\epsilon q^{\frac{d}{2}} -(1-\epsilon)Nq^{\frac{d-2}{2}}} < |E|.
\end{eqnarray*} 
 
Combining $(3.1-2)$ and Theorem \ref{thm1.3} (c) imply 
 \begin{eqnarray*}
    && \frac{Nq^{d-1}}{(q-N)|E|} + \frac{N \l(q^{\frac{d}{2}} -q^{\frac{d-2}{2}}\r)}{q-N} < \epsilon \l(q^{d-2} + q^{\frac{d-2}{2}}\r) \\
    &\Longleftrightarrow & \frac{Nq^{d-1}}{\epsilon(q-N)q^{d-2}-(N-\epsilon)q^{\frac{d}{2}}+(1-\epsilon)Nq^{\frac{d-2}{2}}} < |E|.
\end{eqnarray*}
Combining $(3.1-3)$ and Theorem \ref{thm1.3} (a) imply  
\begin{eqnarray*}
     && \frac{Nq^{d-1}}{(q-N)|E|} + \frac{Nq^{\frac{d-1}{2}}}{q-N} < \epsilon \l( q^{d-2}-q^{\frac{d-3}{2}} \l( \eta (-1) \r)^{\frac{d-1}{2}} \r) \\
     &\Longleftrightarrow & \frac{Nq^{d-1}}{\epsilon (q-N)q^{d-2} - Nq^{\frac{d-1}{2}} - \epsilon(q-N)q^{\frac{d-3}{2}}\l( \eta (-1) \r)^{\frac{d-1}{2}}} < |E| \\
     &\Longleftrightarrow & \frac{Nq^{d-1}}{\epsilon (q-N)q^{d-2} - (N+\epsilon)q^{\frac{d-1}{2}} + \epsilon Nq^{\frac{d-3}{2}}\l( \eta (-1) \r)^{\frac{d-1}{2}}} < |E|.
\end{eqnarray*}
 
Finally, combining $(3.1-3)$ with $d=3$ and Corollary \ref{cor_Eq2} imply  
\begin{eqnarray*}
    && \frac{Nq^3}{(q-N)|E|} < \epsilon \l( q-\eta(-1) \r) \\
    &\Longleftrightarrow  & \frac{Nq^3}{\epsilon (q-N)(q-\eta(-1))} < |E|.
\end{eqnarray*}
 
Hence, the proof of Corollary \ref{cor3.6} is complete.
\end{proof}

\section{Bounding $\T_1(X, E, N)$ for $\lambda=\pm 1$ (Theorem \ref{p11})}
Compared to the proof of Theorem \ref{thm1.3}, we need the following analog of Lemma \ref{Lem1.9}.

\begin{lemma} \label{Lem1.911}Let $E\subset \mathbb F_q^d$ and $\lambda \in \lo -1 ,1 \ro$. Then the following statements hold.

\begin{itemize} 
\item [(i)] If $d\ge 4 $ is  even, then
$$ \sum_{V \in X_\pi} \sum_{\xi \in V \setminus \{ 0\}} \lv \widehat{E}(\xi) \rv^2 \le q^{d-1} |E| + q^{\frac{d}{2}} |E|^2.$$

\item [(ii)] For $d\equiv 3 \pmod 4$ and $q\equiv 3 \pmod 4$, we have
\[ \sum_{V \in X_\pi} \sum_{\xi \in V \setminus \{ 0\}} \lv \widehat{E}(\xi) \rv^2 \le q^{d-1} |E| + q^{\frac{d-1}{2}} |E|^2. \]

\item [(iii)] For other cases, we have
$$ \sum_{V \in X_\pi} \sum_{\xi \in V \setminus \{ 0\}} \lv \widehat{E}(\xi) \rv^2 \le q^{d-1} |E| + q^{\frac{d+1}{2}} |E|^2.$$
\end{itemize}
\end{lemma}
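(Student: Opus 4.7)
The plan is to follow the strategy of the proof of Lemma \ref{Lem1.9} with the cone $H_\lambda$ replaced by the degenerate variety that arises when $\lambda = \pm 1$. Since $\frac{\lambda^2-1}{\lambda^2} = 0$ here, the relevant frequency set is
\[
H := \{\xi \in \F_q^d : \xi_1^2 + \cdots + \xi_{d-1}^2 = 0\}.
\]
First I would verify the analogues of Claims 1 and 2 from the proof of Theorem \ref{P1.4}: every $V \in X_\pi$ sits inside $H$, because any generator $(x_1,\ldots,x_{d-1},\lambda) \in \pi \cap S^{d-1}$ satisfies $x_1^2 + \cdots + x_{d-1}^2 = 1 - \lambda^2 = 0$; and distinct lines in $X_\pi$ intersect only at the origin, since $\lambda \neq 0$. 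Therefore
\[
\sum_{V \in X_\pi} \sum_{\xi \in V \setminus \{0\}} |\widehat{E}(\xi)|^2 \;\le\; R(E) := \sum_{\xi \in H} |\widehat{E}(\xi)|^2,
\]
and the task reduces to bounding $R(E)$.

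Next I would expand $R(E)$ via the orthogonality relation $\mathbf{1}[\xi_1^2+\cdots+\xi_{d-1}^2=0] = q^{-1}\sum_{t \in \F_q} \chi\bigl(t(\xi_1^2+\cdots+\xi_{d-1}^2)\bigr)$. The $t=0$ contribution is $q^{d-1}|E|$. For $t \neq 0$, the $\xi_d$-sum factors out and produces $q \cdot \mathbf{1}[\alpha_d = \beta_d]$, while Lemma \ref{lem:2.5} applied coordinate by coordinate to the first $d-1$ variables yields
\[
\sum_{\xi' \in \F_q^{d-1}} \chi\bigl(t\,||\xi'|| + (\alpha'-\beta')\cdot \xi'\bigr) = \eta^{d-1}(t)\,\mathcal{G}^{d-1}\,\chi\!\left(\tfrac{||\alpha'-\beta'||}{-4t}\right),
\]
where $\xi' = (\xi_1,\ldots,\xi_{d-1})$ and $\alpha', \beta'$ are defined analogously. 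Substituting $u = 1/t$ in the remaining $t$-sum reduces it to a standard Gauss-type sum whose value depends on the parity of $d-1$.

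I would then split into cases. For $d$ even ($\eta^{d-1} = \eta$), the $t$-sum equals $\eta(-1)\,\mathcal{G}$ on pairs with $||\alpha'-\beta'|| \neq 0$ and zero otherwise; combined with $\mathcal{G}^d = \eta(-1)^{d/2} q^{d/2}$, this yields an error bounded by $q^{d/2}|E|^2$, proving (i). For $d$ odd ($\eta^{d-1} \equiv 1$), the $t$-sum equals $q-1$ when $||\alpha'-\beta'|| = 0$ and $-1$ otherwise, giving
\[
R(E) = q^{d-1}|E| + \mathcal{G}^{d-1}(q M_0 - M),
\]
where $M = |\{(\alpha,\beta) \in E^2 : \alpha_d = \beta_d\}|$ and $M_0 \le M$ is the subset with $||\alpha'-\beta'|| = 0$. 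By Lemma \ref{ExplicitGauss}, $\mathcal{G}^{d-1} = (\eta(-1)\,q)^{(d-1)/2}$; in the regime $d \equiv 3 \pmod 4$ and $q \equiv 3 \pmod 4$ we have $(d-1)/2$ odd and $\eta(-1) = -1$, so $\mathcal{G}^{d-1} = -q^{(d-1)/2}$: the would-be dominant term $qM_0$ acquires a minus sign and may be discarded, and the surviving contribution is at most $q^{(d-1)/2}M \le q^{(d-1)/2}|E|^2$, proving (ii). In every remaining odd case $\mathcal{G}^{d-1} = +q^{(d-1)/2}$ and only the cruder bound $qM_0 - M \le q|E|^2$ is available, which yields (iii).

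The main delicate point is the sign bookkeeping for $\mathcal{G}^{d-1}$: pinpointing exactly when this sign is negative (so that the leading $qM_0$ error term becomes discardable) is what separates cases (ii) and (iii), mirroring the role of the quadratic character of $\lambda^2 - 1$ in Lemma \ref{Lem1.9}.
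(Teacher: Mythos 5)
Your proposal is correct and follows essentially the same route as the paper: reduce the double sum to the degenerate variety $\{\xi_1^2+\cdots+\xi_{d-1}^2=0\}$ (using that distinct lines of $X_\pi$ meet only at the origin since $\lambda\neq 0$), expand by orthogonality and coordinatewise Gauss sums, and split according to the parity of $d$ and the sign of $\mathcal{G}^{d-1}$, discarding the $qM_0$ term exactly when $d\equiv 3$, $q\equiv 3 \pmod 4$. The only slip is cosmetic: for $d$ even the $t$-sum equals $\eta\!\left(-\|\alpha'-\beta'\|\right)\mathcal{G}$ rather than $\eta(-1)\mathcal{G}$, but since only $|\mathcal{G}|=\sqrt{q}$ is used, the stated bounds are unaffected.
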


\begin{proof} By applying the orthogonality relation of $\chi$, Lemma \ref{lem:2.5}, one has
\begin{align*}
&\sum_{V \in X_\pi} \sum_{\xi \in V \setminus \{ 0\}} \lv \widehat{E}(\xi) \rv^2 = \sum_{k \in \Fbb_q^\ast} \sum_{\substack{\xi \in \mathbb F_q^d \colon \\ \xi^2_1+\cdots+ \xi_{d-1}^2=0,\\ \xi_d=\lambda}}  \lv \widehat{E}(k\xi) \rv^2  \le 
\sum_{\substack{\xi \in \Fbb_q^d \colon \\ \xi_1^2 +\cdots + \xi_{d-1}^2 =0}} \lv \widehat{E}(\xi)\rv^2 = \sum_{\substack{\xi\in \mathbb F_q^d \colon \\ \xi^2_1+\cdots+ \xi_{d-1}^2=0}}  \sum_{\alpha, \beta\in E} \chi( (\alpha-\beta)\cdot \xi)\\
 & =  \sum_{\xi\in \mathbb F_q^d}  \left(\frac{1}{q} \sum_{t\in \mathbb F_q} 
 \chi(t(\xi_1^2+ \cdots + \xi_{d-1}^2)) \right)  \sum_{\alpha, \beta\in E} \chi( (\alpha-\beta)\cdot \xi) \displaybreak[3]\\
 &=\frac{1}{q}\sum_{\xi_d \in \F_q }\sum_{\alpha, \beta\in E}\chi( \xi_d(\alpha_d-\beta_d))\sum_{\xi^\ast \in \mathbb{F}_q^{d-1}}\sum_{t\in \mathbb F_q}\chi(t||\xi^\ast||)\chi\left(\sum_{i=1}^{d-1}(\alpha_i-\beta_i)\xi_i^\ast \right)\\
& = \frac{1}{q}\sum_{\xi_d \in \F_q}\sum_{\alpha, \beta\in E}\chi( \xi_d (\alpha_d-\beta_d))\sum_{\xi^\ast \in \mathbb{F}_q^{d-1}}\chi\left(\sum_{i=1}^{d-1}(\alpha_i-\beta_i)\xi_i^\ast\right)\\
& \qquad +\frac{1}{q}\sum_{\xi_d\in \F_q }\sum_{\alpha, \beta\in E}\chi( \xi_d(\alpha_d-\beta_d))\sum_{\xi^\ast \in \mathbb{F}_q^{d-1}}\sum_{ t\in \mathbb F_q^\ast}\chi(t||\xi^\ast||)\chi\left(\sum_{i=1}^{d-1}(\alpha_i-\beta_i)\xi_i^\ast \right) \\ 
 &=q^{d-2}\sum_{\xi_d \in \F_q}\sum_{\substack{\alpha, \beta \in E\colon \\ \alpha_i=\beta_i,\forall  i\le d-1}}\chi (\xi_d (\alpha_d-\beta_d))\\
 & \qquad +\frac{1}{q}\G^{d-1}\sum_{\xi_d \in \F_q }\sum_{\alpha, \beta\in E}\chi( \xi_d (\alpha_d-\beta_d))\sum_{t\ne 0}\eta(t)^{d-1}\chi\left(\frac{||\alpha'-\beta'||}{-4t}\right)\\
 & = q^{d-2}\sum_{\xi_d \in \F_q}\sum_{\alpha \in E}1+q^{d-2}\sum_{\xi_d \in \F_q}\sum_{\substack{\alpha, \beta\colon \\ \alpha_i=\beta_i, i\le d-1,\\
 \alpha_d \ne \beta_d}}\chi( \xi_d (\alpha_d-\beta_d)) \\
& \qquad +\frac{1}{q}\G^{d-1}\sum_{\xi_d\in \F_q }\sum_{\alpha, \beta\in E}\chi( \xi_d (\alpha_d-\beta_d))\sum_{t\ne 0}\eta(t)^{d-1}\chi\left(\frac{||\alpha'-\beta'||}{-4t}\right)\\
& = q^{d-1}|E| +\frac{1}{q}\G^{d-1}\sum_{\xi_d \in \F_q }\sum_{\alpha, \beta\in E}\chi( \xi_d (\alpha_d-\beta_d))\sum_{t\ne 0}\eta(t)^{d-1}\chi\left(\frac{||\alpha'-\beta'||}{-4t}\right),
 \end{align*}
 
where $\alpha ' = \l( \alpha_1 , \ldots ,\alpha_{d-1}\r) , \beta ' = \l( \beta_1 ,\ldots ,\beta_{d-1} \r)$.
 
{\bf Case 1:} If $d\ge 4$ is even, then $d-1$ is odd. Then, $\eta(t)^{d-1}=\eta(t) = \eta \l( \frac{1}{t} \r)$, and apply Lemma \ref{ExplicitGauss}, one has
\begin{align*}  
    \sum_{V \in X_\pi} \sum_{\xi \in V \setminus \{ \oo\}} \lv \widehat{E}(\xi) \rv^2 & \le q^{d-1}|E| +\frac{1}{q}\G^{d-1}\sum_{\xi_d \in \F_q }\sum_{\alpha, \beta\in E}\chi( \xi_d (\alpha_d-\beta_d))\sum_{t\ne 0}\eta(t)\chi\left(\frac{||\alpha'-\beta'||}{-4t}\right) \\
    & = q^{d-1}|E| +\frac{1}{q}\G^{d-1}\sum_{\xi_d\in \F_q }\sum_{\alpha, \beta\in E\colon \lV \alpha ' -\beta ' \rV =0  }\chi( \xi_d (\alpha_d-\beta_d))\sum_{t\ne 0}\eta(t) \\
    & \quad +\frac{1}{q}\G^{d-1}\sum_{\xi_d \in \F_q }\sum_{\alpha, \beta\in E\colon \lV \alpha ' -\beta ' \rV \ne 0}\chi( \xi_d (\alpha_d-\beta_d))\sum_{t\ne 0}\eta(t)\chi\left(\frac{||\alpha'-\beta'||}{-4t}\right)\\
& = q^{d-1}|E| +\frac{1}{q}\G^{d} \sum_{\xi_d\in \F_q }\sum_{\alpha, \beta\in E\colon \lV \alpha '- \beta ' \rV \ne 0}\chi( \xi_d (\alpha_d-\beta_d))\eta \l( \frac{-1}{\lV \alpha ' -\beta' \rV} \r)  \\
    & = q^{d-1}|E|+ \frac{1}{q}\G^{d} \sum_{\xi_d \in \F_q }\sum_{\alpha, \beta\in E \colon \lV \alpha ' -\beta ' \rV \ne 0}\chi( \xi_d (\alpha_d-\beta_d))\eta \l( \frac{-1}{\lV \alpha ' -\beta' \rV} \r) \\
    & \le q^{d-1}|E| + \frac{1}{q}\G^{d} \sum_{\xi_d \in \F_q }\sum_{\alpha, \beta\in E \colon \lV \alpha ' -\beta ' \rV \ne 0} 1 \le q^{d-1}|E|+\frac{q^{\frac{d}{2}}}{q}q|E|^2  =q^{d-1}|E|+q^{\frac{d}{2}}|E|^2.
\end{align*}
{\bf Case 2:} If $d\ge 3$ is odd, then $\eta^{d-1}(t)=1$. So 
\begin{align*}
    \sum_{V \in X_\pi} \sum_{\xi \in V \setminus \{ \oo\}} \lv \widehat{E}(\xi) \rv^2 & \le q^{d-1}|E| +\frac{1}{q}\G^{d-1}\sum_{\xi_d\in \F_q}\sum_{\alpha, \beta\in E}\chi( \xi_d (\alpha_d-\beta_d))\sum_{t\ne 0}\chi\left(\frac{||\alpha'-\beta'||}{-4t}\right).
\end{align*}
Note that from Lemma \ref{ExplicitGauss}, if $d\equiv 3\pmod 4$, and $q=p^r$ with $r$ odd, $p\equiv 3\pmod 4$, then $\G^{d-1}=-q^{\frac{d-1}{2}}$, otherwise, we have $\G^{d-1}=q^{\frac{d-1}{2}}$.

{\bf Case 2.1:} If $d,q\equiv 3\pmod 4$ and $r$ is odd, then 
\begin{align*}
    \sum_{V \in X_\pi} \sum_{\xi \in V \setminus \{ \oo\}} \lv \widehat{E}(\xi) \rv^2 &\le q^{d-1}|E|-q^{\frac{d-3}{2}}(q-1)\sum_{\xi_d\in \F_q }\sum_{\substack{\alpha,\beta \colon \\ ||\alpha'-\beta'||=0}}\chi( \xi_d (\alpha_d-\beta_d))
     \\ & \quad 
    +q^{\frac{d-3}{2}}\sum_{\xi_d\in \F_q }\sum_{\substack{\alpha,\beta \colon \\ ||\alpha'-\beta'||\ne 0}}\chi( \xi_d (\alpha_d-\beta_d))\\
    &=q^{d-1}|E|- q^{\frac{d-1}{2}}\sum_{\substack{\alpha ,\beta \colon \\ ||\alpha'-\beta'||=0}}\sum_{\xi_d\in \F_q }\chi( \xi_d (\alpha_d-\beta_d)) 
    +q^{\frac{d-3}{2}}\sum_{\alpha, \beta\in E}\sum_{\xi_d\in \F_q }\chi( \xi_d (\alpha_d-\beta_d))\\
    & = q^{d-1}|E|- q^{\frac{d-1}{2}}\sum_{\substack{\alpha ,\beta \colon \lV \alpha' -\beta' \rV =0,\\ \alpha_d =\beta_d}} q - q^{\frac{d-1}{2}}\sum_{\substack{\alpha ,\beta \colon \lV \alpha' -\beta' \rV =0,\\ \alpha_d \ne \beta_d}} 0
     \\ & \quad 
    + q^{\frac{d-3}{2}}\sum_{\alpha , \beta \colon \alpha_d =\beta_d} q  + q^{\frac{d-3}{2}}\sum_{\alpha , \beta \colon \alpha_d \ne \beta_d}0 
    \quad \le q^{d-1}|E|+q^{\frac{d-1}{2}}|E|^2.
\end{align*}

{\bf Case 2.2:} For other cases, we have 
\begin{align*}
   \sum_{V \in X_\pi} \sum_{\xi \in V \setminus \{ \oo\}} \lv \widehat{E}(\xi) \rv^2 &\le q^{d-1}|E|+q^{\frac{d-3}{2}}(q-1)\sum_{\xi_d\in \F_q }\sum_{\substack{\alpha ,\beta \in E \colon \\ ||\alpha'-\beta'||=0}}\chi(\xi_d (\alpha_d-\beta_d))\\
   & \quad 
    -q^{\frac{d-3}{2}}\sum_{\xi_d\in \F_q}\sum_{\substack{\alpha ,\beta \in E \colon \\ ||\alpha'-\beta'||\ne 0}}\chi(\xi_d (\beta_d-\alpha_d))\\
    &=q^{d-1}|E|+q^{\frac{d-1}{2}}\sum_{\alpha ,\beta \colon ||\alpha'-\beta'||=0}\sum_{\xi_d\in \F_q }\chi(\xi_d (\alpha_d-\beta_d)) 
    -q^{\frac{d-3}{2}}\sum_{\alpha, \beta\in E}\sum_{\xi_d\in \F_q }\chi(\xi_d (\alpha_d-\beta_d))  \\
    & = q^{d-1}|E| + q^{\frac{d-1}{2}}\sum_{\substack{\alpha , \beta \colon \lV \alpha' -\beta' \rV =0,\\ \alpha_d =\beta_d}} q+ q^{\frac{d-1}{2}}\sum_{\substack{\alpha , \beta \colon \lV \alpha' -\beta' \rV =0,\\ \alpha_d \ne \beta_d}} 0 \\
    & \quad - q^{\frac{d-3}{2}}\sum_{\alpha ,\beta \colon \alpha_d =\beta_d}q- q^{\frac{d-3}{2}}\sum_{\alpha ,\beta \colon \alpha_d \ne \beta_d}0.
\end{align*}
Therefore,
$\sum_{V \in X_\pi} \sum_{\xi \in V \setminus \{ \oo\}} \lv \widehat{E}(\xi) \rv^2 \le q^{d-1}|E|+q^{\frac{d+1}{2}}|E|^2.
$
This completes the proof.
\end{proof}
We now recall and prove Theorem \ref{p11}.
\begin{theorem}\label{p1}  Let $E\subset \mathbb F_q^d$,  then we have
\begin{align*}
    \lv \mathcal{T}_1\l( X_\pi, E, N\r) \rv \le \begin{cases}
        \dfrac{q^{d-1}N}{(q-N)|E|}  + \dfrac{q^{\frac{d}{2}}N}{q-N} ,& \text{if $d \ge 4$ and even},\\[10pt]
        \dfrac{q^{d-1}N}{(q-N)|E|}  + \dfrac{q^{\frac{d-1}{2}}N}{q-N} ,&  \text{if $d,q \equiv 3 \pmod 4$},\\[10pt]
        \dfrac{q^{d-1}N}{(q-N)|E|}  + \dfrac{q^{\frac{d+1}{2}}N}{q-N} , & \text{otherwise}.
    \end{cases}
\end{align*}
 
\end{theorem}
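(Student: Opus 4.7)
The plan is to combine Theorem \ref{lem1K} (specialized to $m=1$ and $X=X_\pi$) with the three cases of Lemma \ref{Lem1.911}. Concretely, Theorem \ref{lem1K} already gives
\[
\lv \mathcal{T}_1(X_\pi, E, N) \rv \le \frac{N}{(q-N)|E|^2} \sum_{V \in \mathcal{T}_1(X_\pi, E, N)} \sum_{\xi \in V \setminus \{0\}} |\widehat{E}(\xi)|^2,
\]
so the first step is simply to quote this inequality.

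Next, since $\mathcal{T}_1(X_\pi, E, N) \subset X_\pi$ and each summand $|\widehat{E}(\xi)|^2$ is nonnegative, I would enlarge the outer sum from $V \in \mathcal{T}_1(X_\pi, E, N)$ to $V \in X_\pi$. This produces exactly the quantity bounded in Lemma \ref{Lem1.911}, namely $\sum_{V \in X_\pi} \sum_{\xi \in V \setminus \{\oo\}} |\widehat{E}(\xi)|^2$. Then I would split into the same three cases as in Lemma \ref{Lem1.911}: (i) $d \ge 4$ even, (ii) $d \equiv 3 \pmod 4$ and $q \equiv 3 \pmod 4$, and (iii) all other cases. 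In each case I substitute the corresponding bound of the form $q^{d-1}|E| + q^{\alpha}|E|^2$, where $\alpha \in \{d/2,\ (d-1)/2,\ (d+1)/2\}$ depending on the case.

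Finally, I would divide through by $|E|^2$ and distribute the factor $N/(q-N)$. The term $q^{d-1}|E|$ yields $\frac{Nq^{d-1}}{(q-N)|E|}$ in every case, while the term $q^{\alpha}|E|^2$ yields $\frac{Nq^{\alpha}}{q-N}$, producing exactly the three advertised expressions. Since the two cited results (Theorem \ref{lem1K} and Lemma \ref{Lem1.911}) are both already established, there is essentially no obstacle left at this stage; the heavy lifting (separating the $\xi_d$ fiber, applying Lemma \ref{lem:2.5}, and tracking the sign $\G^{d-1} = \pm q^{(d-1)/2}$ via Lemma \ref{ExplicitGauss}) has been absorbed into Lemma \ref{Lem1.911}. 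The only point requiring a moment of care is the passage from a sum over $V \setminus \{0\}$ (lines through the origin) to a sum over all nonzero scalar multiples $k\xi$ with $\xi \in \pi \cap S^{d-1}$, which is clean because distinct lines in $X_\pi$ meet only at the origin (as verified in Claim 2 in the proof of Theorem \ref{P1.4}, and remaining valid at $\lambda = \pm 1$ since $\lambda \neq 0$).
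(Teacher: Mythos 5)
Your proposal is correct and is essentially identical to the paper's own proof: quote Theorem \ref{lem1K}, enlarge the sum from $\mathcal{T}_1(X_\pi,E,N)$ to $X_\pi$ by positivity, and then substitute the three case bounds of Lemma \ref{Lem1.911} before dividing by $|E|^2$ and distributing $N/(q-N)$. Your closing remark about lines in $X_\pi$ meeting only at the origin (valid since $\lambda=\pm1\neq 0$) is the same observation the paper relies on inside Lemma \ref{Lem1.911}, so nothing further is needed.
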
 

\begin{proof} By Theorem \ref{lem1K}, it follows that
$$\lv \mathcal{T}_1\l(X_\pi, E, N\r)\rv \le \frac{N}{(q-N)|E|^2}  \sum_{V\in \T_1(X_\pi, E,N)} \sum_{\xi \in V\setminus \{\oo\}} |\widehat{E}(\xi)|^2.$$ 
Since $\mathcal{T}_1 (X_\pi , E ,N) \subset X_\pi$, we are reduced to 
$$ \lv \mathcal{T}_1\l( X_\pi, E, N\r) \rv \le \frac{N}{(q-N)|E|^2}  \sum_{V\in X_\pi} \sum_{\xi \in V\setminus \{\oo\}} |\widehat{E}(\xi)|^2 .$$
Now applying Lemma \ref{Lem1.911},we have 
\begin{align*}
    \lv \mathcal{T}_1\l( X_\pi, E, N\r) \rv \le \begin{cases}
        \dfrac{q^{d-1}N}{(q-N)|E|}  + \dfrac{q^{\frac{d}{2}}N}{q-N} ,& \text{if $d \ge 4$ and even},\\[10pt]
        \dfrac{q^{d-1}N}{(q-N)|E|}  + \dfrac{q^{\frac{d-1}{2}}N}{q-N} ,&  \text{if $d,q \equiv 3 \pmod 4$},\\[10pt]
        \dfrac{q^{d-1}N}{(q-N)|E|}  + \dfrac{q^{\frac{d+1}{2}}N}{q-N} , & \text{otherwise}.
    \end{cases}
\end{align*}
This completes the proof.
\end{proof}

\section{Application on the pinned dot-product problem (Theorem \ref{thm1.7})}

\begin{proof}[Proof of Theorem \ref{thm1.7}]
We begin with two preliminary observations.  

\textbf{Observation 1.} From the definition of $\Tca_1$, if there exists a constant $C > 1$ such that  
\[
\big| \Tca_1 \big( X_\pi ,E ,N \big) \big| < \frac{|X_\pi|}{C},
\]
then at least $\big( 1 - \tfrac{1}{C} \big) |X_\pi|$ points $\yy \in \l( \pi \cap S^{d-1} \r)$ satisfy 
\[
\# \{ \yy \cdot \xx : \xx \in E \} \ge N.
\]

\textbf{Observation 2.}  
Recall from Lemma \ref{Lem1.6} that for $d \ge 3$ we have  
\[
|X_\pi| = \begin{cases}
    q^{d-2} + q^{\frac{d-2}{2}} \, \eta\!\left( (-1)^{\frac{d}{2}} (\lambda^2-1)\right) , & \text{if $d$ is even and $\lambda \notin \{-1,1\}$},\\[6pt]
    q^{d-2} , & \text{if $d$ is even and $\lambda \in \{-1,1\}$},\\[6pt]
    q^{d-2} - q^{\frac{d-3}{2}} \big(\eta(-1)\big)^{\frac{d-1}{2}} , & \text{if $d$ is odd and $\lambda \notin \{-1,1\}$},\\[6pt]
    q^{d-2} + q^{\frac{d-1}{2}} \big(\eta(-1)\big)^{\frac{d-1}{2}} - q^{\frac{d-3}{2}} \big(\eta(-1)\big)^{\frac{d-1}{2}}, & \text{if $d$ is odd and $\lambda \in \{-1,1\}$}.
\end{cases}
\]
Thus, for $d \ge 3$ and sufficiently large $q$, we obtain  
\[
|X_\pi| \ge \frac{q^{d-2}}{2},
\]
except in the case $d=3$, $\lambda \in \{\pm 1\}$, and $q \equiv 1 \pmod 4$, where $|X_\pi| = 1$.  

Combining the above observations, it suffices to show that  
\[
\big| \Tca_1 \big( X_\pi ,E ,N \big) \big| < \frac{q^{d-2}}{3} \le \frac{2|X_\pi|}{3}.
\]

\textbf{Case 1:} $\boldsymbol{\lambda \notin \{\pm 1\}}$.  
By Theorem \ref{thm1.3}, for $d \ge 3$, $0 < N < q$, and $E \subset \Fbb_q^d$, we have  
\[
\big| \Tca_1 (X_\pi, E ,N) \big| \le \frac{Nq^{d-1}}{(q-N)|E|} + 
\begin{cases}
    \dfrac{Nq^{\frac{d-1}{2}}}{q-N}, & \text{if $d$ is odd},\\[6pt]
    \dfrac{Nq^{\frac{d-2}{2}}}{q-N}, & \text{if $d$ is even and $\eta \l( (-1)^{\frac{d}{2}} (\lambda^2-1)\r) = -1$},\\[6pt]
    \dfrac{N \big(q^{\frac{d}{2}} - q^{\frac{d-2}{2}}\big)}{q-N}, & \text{if $d$ is even and $\eta \l( (-1)^{\frac{d}{2}} (\lambda^2-1)\r) = 1$}.
\end{cases}
\]
For sufficiently large $q$, with $|E| \ge q$ and $N = \lfloor q/10 \rfloor$, we have $q - N \ge 0.9q$, hence  
\begin{align*}
\big| \Tca_1 (X_\pi ,E ,N) \big| 
&\le \frac{q^{d-1}}{9q} + 
\begin{cases}
    \dfrac{q^{\frac{d-1}{2}}}{9}, & \text{if $d$ is odd},\\[6pt]
    \dfrac{q^{\frac{d-2}{2}}}{9}, & \text{if $d$ is even, $\eta \l( (-1)^{\frac{d}{2}} (\lambda^2-1)\r) = -1$},\\[6pt]
    \dfrac{q^{\frac{d}{2}} - q^{\frac{d-2}{2}}}{9}, & \text{if $d$ is even, $\eta \l( (-1)^{\frac{d}{2}} (\lambda^2-1)\r) = 1$.}
\end{cases} \\
&\le \frac{q^{d-2}}{9} + \frac{q^{d-2}}{9} \le \frac{q^{d-2}}{3} \le \frac{2|X_\pi|}{3}.
\end{align*}

\textbf{Case 2:} $\boldsymbol{\lambda \in \{\pm 1\}}$.  
Applying Theorem \ref{p11}, for $d\ge 4, 0< N < q$, and $E \subset \Fbb_q^d$, we have  
\[
\big| \Tca_1 (X_\pi, E, N) \big| \le \frac{Nq^{d-1}}{(q-N)|E|} + 
\begin{cases}
    \dfrac{Nq^{\frac{d}{2}}}{q-N}, & \text{if $d$ is even},\\[6pt]
    \dfrac{Nq^{\frac{d-1}{2}}}{q-N}, & \text{if $d \equiv 3 \pmod{4}$ and $q \equiv 3 \pmod{4}$},\\[6pt]
    \dfrac{Nq^{\frac{d+1}{2}}}{q-N}, & \text{otherwise}.
\end{cases}
\]
For sufficiently large $q$, with $|E|\ge q$ and $N = \lfloor q/10 \rfloor$, we obtain  
\begin{align*}
\big| \Tca_1 (X_\pi , E,N) \big| 
&\le \frac{q^{d-1}}{9q} + 
\begin{cases}
    \dfrac{q^{\frac{d}{2}}}{9}, & \text{if $d$ is even},\\[6pt]
    \dfrac{q^{\frac{d-1}{2}}}{9}, & \text{if $d \equiv 3 \pmod{4},\, q \equiv 3 \pmod{4}$},\\[6pt]
    \dfrac{q^{\frac{d+1}{2}}}{9}, & \text{otherwise}
\end{cases}\\
&\le \frac{q^{d-2}}{9} + \frac{q^{d-2}}{9} \le \frac{q^{d-2}}{3} \le \frac{2|X_\pi|}{3}.
\end{align*}


This completes the proof.
\end{proof}

    


\begin{remark}
    In the above proof, the remaining case is: $d = 3$, $\lambda \in \{-1,1\}$, and $q \equiv 1 \pmod{4}$. 
    
    If we apply Theorem \ref{p11} with $N = \frac{q}{C}$ for some $0 < C < 1$, we obtain
    \[
        \lv \Tca_1 \l( X_\pi ,E , \frac{q}{C} \r)\rv \le \frac{C q^{2}}{(1-C)|E|} + \frac{C q^2}{1-C}.
    \]
    For sufficiently large $q$, it follows that
    \[
        \frac{C q^2}{1-C} > 2q - 1 = |X_\pi|,
    \]
    and thus Theorem \ref{p11} fails to provide any meaningful information in this case.
\end{remark}

\section{Bounding $\T_1(X, E, N)$ for $\lambda=0$ (Theorem \ref{thm1.55})}
Theorem \ref{thm1.55} follows from the following general statement.
\begin{theorem}\label{thm1.55'}
    Let $E\subset \mathbb F_q^d$ and $\lambda=0$.
    Then we have
 $$ \lv \mathcal{T}_1\l(X_\pi, E, N\r) \rv \ll \begin{cases}
    \dfrac{q^{\frac{3}{2}}\lv S^{d-2}\rv N}{(q-N)|E|^{\frac{1}{2}}} , & \text{if $d=3$},\\[10pt]
    \dfrac{q^{\frac{2d-2}{d}}\lv S^{d-2} \rv N}{(q-N)|E|^{\frac{d-2}{d}}}, & \text{for other cases}.
\end{cases} $$
 For $\mathbf{x}\in \mathbb{F}_q^{d-1}$, define $
f(\mathbf{x}) := \#\{t \in \mathbb{F}_q : (\mathbf{x},t) \in E\}
$, then 
$$ \lv \mathcal{T}_1\l(X_\pi, E, N\r) \rv \le \frac{2q^{d-1}N}{(q-N)|E|} \cdot \max_{\xx\in \mathbb{F}_q^{d-1}}f(\xx).$$
\end{theorem}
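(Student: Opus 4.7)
\medskip
\noindent\textbf{Proof proposal for Theorem \ref{thm1.55}.} The plan is to follow the same Fourier template used in Theorems \ref{thm1.3} and \ref{p11}, but to replace the cone-type restriction input by a simple Plancherel identity in dimension $d-1$. Concretely, I would first apply Theorem \ref{lem1K} (with $m=1$) to obtain
\[
|\mathcal T_1(X_\pi,E,N)|\le\frac{N}{(q-N)|E|^2}\sum_{V\in\mathcal T_1(X_\pi,E,N)}\sum_{\xi\in V\setminus\{\oo\}}|\widehat{E}(\xi)|^2,
\]
and then enlarge the outer sum to run over all $V\in X_\pi$, exactly as in the derivation of Theorem \ref{P1.4}.

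The structural observation specific to $\lambda=0$ is that every line $V\in X_\pi$ is spanned by a point $(\xx',0)\in\pi\cap S^{d-1}$, so $V$ is contained in the linear hyperplane $H:=\mathbb{F}_q^{d-1}\times\{0\}$. Since distinct lines through the origin intersect only at $\oo$, the nonzero points lying on lines of $X_\pi$ appear without repetition in $H\setminus\{\oo\}$, hence
\[
\sum_{V\in X_\pi}\sum_{\xi\in V\setminus\{\oo\}}|\widehat{E}(\xi)|^2\ \le\ \sum_{\xi\in H}|\widehat{E}(\xi)|^2.
\]
For $\xi=(\xi',0)\in H$ the inner Fourier coefficient collapses to a lower-dimensional transform:
\[
\widehat{E}(\xi',0)=\sum_{\alpha\in E}\chi(-\alpha'\cdot\xi')=\sum_{\xx\in\mathbb{F}_q^{d-1}}f(\xx)\,\chi(-\xx\cdot\xi')=\widehat{f}(\xi'),
\]
where $\alpha'=(\alpha_1,\dots,\alpha_{d-1})$ and $f$ is the fibre multiplicity function from the statement.

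Applying Plancherel on $\mathbb{F}_q^{d-1}$ together with the elementary bound $\sum_{\xx}f(\xx)^2\le \|f\|_\infty\|f\|_1=|E|\cdot\max_\xx f(\xx)$ yields
\[
\sum_{\xi\in H}|\widehat{E}(\xi)|^2=q^{d-1}\sum_{\xx\in\mathbb{F}_q^{d-1}}f(\xx)^2\le q^{d-1}|E|\cdot\max_{\xx\in\mathbb{F}_q^{d-1}}f(\xx).
\]
Substituting this into the bound from Theorem \ref{lem1K} gives the claimed inequality; the harmless factor of $2$ in the statement is a convenient cushion absorbing the $\xi=\oo$ contribution that we included for free.

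I do not anticipate a genuine obstacle. The only conceptually new feature relative to the $\lambda\ne 0$ theorems is that the frequency set $K_\pi=\bigcup_{V\in X_\pi}(V\setminus\{\oo\})$ now lives inside a linear subspace rather than a nondegenerate quadric, so there is no restriction or extension estimate to invoke. Instead, the reduction $\widehat{E}(\xi',0)=\widehat{f}(\xi')$ plus the trivial $L^\infty$--$L^1$ interpolation $\|f\|_2^2\le\|f\|_\infty\|f\|_1$ is exactly what supplies the ``$\max_\xx f(\xx)$'' factor appearing in the theorem, and this step is where the linear (as opposed to cone) geometry of the $\lambda=0$ slice plays its role.
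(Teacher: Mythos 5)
There is a genuine gap: the statement you were asked to prove (Theorem \ref{thm1.55'}) has \emph{two} conclusions, and your proposal only addresses the second one. The first display — the bounds $\ll q^{3/2}|S^{d-2}|N/\bigl((q-N)|E|^{1/2}\bigr)$ for $d=3$ and $\ll q^{\frac{2d-2}{d}}|S^{d-2}|N/\bigl((q-N)|E|^{\frac{d-2}{d}}\bigr)$ otherwise — cannot be reached by Plancherel alone, since Plancherel only yields the $\max_{\xx}f(\xx)$ bound. In the paper this part is Lemma \ref{Lem1.912}: after writing $\widehat{E}(k\xi)=\widehat{f}(k\xi)$ for $\xi\in S^{d-2}$, one applies the $L^2\to \frac{2d}{d-2}$ extension estimate for spheres (Theorem \ref{lem6.1}, and Theorem \ref{lem6.2} with exponent $4$ when $d=3$) to the dilated spheres $kS^{d-2}=S^{d-2}_{k^2}$, then a H\"older/duality step together with the trivial bound $f(\xx)\le q$ to control $\sum_{\xx}|f(\xx)|^{\frac{2d}{d+2}}\le q^{\frac{d-2}{d+2}}|E|$. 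Your closing remark that ``there is no restriction or extension estimate to invoke'' because the frequencies lie in a linear hyperplane is precisely where the proposal goes wrong: the relevant frequency set is not the full hyperplane but the union of dilates of the sphere $S^{d-2}$ inside it, and exploiting that spherical structure via extension estimates is what produces the gain in $|E|$ (exponents $|E|^{1/2}$, $|E|^{(d-2)/d}$) in the first conclusion.

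For the second conclusion your argument is correct, and in fact marginally cleaner than the paper's. You reduce to $\sum_{\xi\in \mathbb{F}_q^{d-1}\times\{0\}}|\widehat{E}(\xi)|^2=q^{d-1}\sum_{\xx}f(\xx)^2\le q^{d-1}|E|\max_{\xx}f(\xx)$ by noting that distinct lines of $X_\pi$ meet only at the origin, so no multiplicity factor is needed; the paper instead decomposes the frequency set as $\bigcup_{k\in\mathbb{F}_q^\ast}kS^{d-2}$ and pays a multiplicity factor $2$ (each $\eta$ with $\|\eta\|$ a nonzero square has two representations $k\xi$), arriving at the same bound up to the constant. Both routes start from Theorem \ref{lem1K} and the inclusion $\mathcal{T}_1(X_\pi,E,N)\subset X_\pi$, and the identification $\widehat{E}(\xi',0)=\widehat{f}(\xi')$ is the same in both. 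So: keep your paragraph as the proof of the second estimate, but you still owe an argument (essentially Lemma \ref{Lem1.912}) for the first.
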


The three inequalities in Theorem~\ref{thm1.55'} play slightly different roles.

If
$N = q^\alpha$ with $0<\alpha<1$ and $|E| = q^\beta$, then one checks that the first two bounds, obtained from Fourier extension estimates for the
sphere $S^{d-2}\subset \mathbb{F}_q^{d-1}$, are non-trivial precisely in the regimes
\[
\beta > 1 + 2\alpha \quad\text{when } d=3,
\qquad
\beta > 1 + \frac{d}{d-2}\,\alpha \quad\text{when } d\ge 4.
\]
We do not know whether these exponents are sharp. By contrast, the third inequality
\[
\bigl|\mathcal{T}_1(X_\pi,E,N)\bigr|
\;\le\;
\frac{2q^{d-1}N}{(q-N)|E|}\cdot\max_{\mathbf{x}\in\mathbb{F}_q^{d-1}} f(\mathbf{x}),
\]
has the correct order of magnitude in terms of its dependence on $N$ and
$\max f$. Construction~\ref{cons1.5} shows that this estimate is
essentially optimal, and its structure is very close to the bounds obtained in
Theorems~\ref{thm1.3} and~\ref{p11}. In this sense, the last inequality
captures the \textit{right scale} for exceptional sets in the $\lambda=0$ case,
while the first two inequalities highlight the additional information that
can be extracted from Fourier extension estimates.

To prove this theorem, we make use of the following analog of Lemma \ref{Lem1.9}.

\begin{lemma} \label{Lem1.912}Let $E\subset \mathbb F_q^d.$ Then the following statements hold.
\begin{itemize} 
\item [(i)] If $d=3$, then
$$  \sum_{V\in X_\pi} \sum_{\xi \in V\setminus \{\oo\}} |\widehat{E}(\xi)|^2 \ll q^{3/2}\lv S^{d-2} \rv |E|^{3/2}.$$

\item [(ii)] For other cases, then
$$   \sum_{V\in X_\pi} \sum_{\xi \in V\setminus \{\oo\}} |\widehat{E}(\xi)|^2 \ll q^{\frac{2d-2}{d}} \lv S^{d-2}\rv |E|^{\frac{d+2}{d}}.$$
\end{itemize}
\end{lemma}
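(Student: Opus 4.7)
The plan is to translate the left-hand sum into a sum of $|\widehat{f}(\xi')|^2$ over a union of spheres in $\mathbb{F}_q^{d-1}$, where $f$ is the $(d-1)$-dimensional projection function of $E$, and then close it using the sphere-extension estimates of Theorems \ref{lem6.1} and \ref{lem6.2} together with a Holder interpolation driven by the pointwise bound $\|f\|_\infty\le q$.

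First, I unfold the geometry. Because $\lambda=0$, each line $V\in X_\pi$ is spanned by some unit vector $(\xx',0)$ with $\xx'\in\mathbb{F}_q^{d-1}$, $\|\xx'\|=1$; comparing the first $d-1$ coordinates shows that distinct lines of $X_\pi$ still meet only at $\oo$. Hence
\[
\sum_{V\in X_\pi}\sum_{\xi\in V\setminus\{\oo\}}|\widehat{E}(\xi)|^2 \;=\; \sum_{\xi\in K_\pi} |\widehat{E}(\xi)|^2,
\]
where $K_\pi:=\bigcup_{V\in X_\pi}(V\setminus\{\oo\})\subset\mathbb{F}_q^{d-1}\times\{0\}$. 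Since $\|s\xx'\|=s^2$, one has $K_\pi=\{(\xi',0):\|\xi'\|\in(\mathbb{F}_q^\ast)^2\}$, which partitions along the radius as $K_\pi=\bigsqcup_{j\in(\mathbb{F}_q^\ast)^2}(S_j^{d-2}\times\{0\})$, with $S_j^{d-2}\subset\mathbb{F}_q^{d-1}$ the sphere of radius $j$. For $\xi=(\xi',0)$ one also has $\widehat{E}(\xi',0)=\widehat{f}(\xi')$, where $f(\xx'):=\#\{\alpha\in E:(\alpha_1,\dots,\alpha_{d-1})=\xx'\}$; the two inputs for Holder are $\|f\|_1=|E|$ and the fiber bound $\|f\|_\infty\le q$ (each preimage contains at most $q$ points since the last coordinate varies in $\mathbb{F}_q$).

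Next, for each $S_j^{d-2}$ I apply a sphere extension estimate with target $b=2$: by duality, $R^\ast_{S_j^{d-2}}(2\to a)\ll 1$ becomes
\[
\sum_{\xi'\in S_j^{d-2}}|\widehat{f}(\xi')|^2 \;\ll\; |S_j^{d-2}|\cdot \|f\|_{a'}^2,\qquad a':=\tfrac{a}{a-1}.
\]
For $d=3$, Theorem \ref{lem6.2} supplies $a=4$, $a'=4/3$, and Holder gives $\|f\|_{4/3}^2\le|E|^{3/2}q^{1/2}$. For $d\ge 4$ the ambient dimension $d-1$ is at least $3$, so Theorem \ref{lem6.1} runs at the endpoint $a=\tfrac{2d}{d-2}$, $a'=\tfrac{2d}{d+2}$, giving $\|f\|_{a'}^2\le|E|^{(d+2)/d}q^{(d-2)/d}$. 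Finally, I sum over $j\in(\mathbb{F}_q^\ast)^2$: uniformly $|S_j^{d-2}|\ll |S^{d-2}|$, and there are $\sim q/2$ nonzero squares, so $\sum_{j}|S_j^{d-2}|\ll q|S^{d-2}|$. Multiplying yields exactly $q^{3/2}|S^{d-2}||E|^{3/2}$ when $d=3$ and $q^{(2d-2)/d}|S^{d-2}||E|^{(d+2)/d}$ when $d\ge 4$.

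The main obstacle I anticipate is the exponent bookkeeping: selecting the sharpest admissible $a$ in each dimensional regime and verifying that the fiber bound $\|f\|_\infty\le q$ supplies exactly the right power of $q$ to produce the claimed $|E|$-exponent; any weaker pointwise control on $f$ would shift the $|E|$-power. A helpful feature of the geometry is that $K_\pi$ excludes the null cone $\{\|\xi'\|=0\}$, so the anomalously large sphere $S_0^{d-2}$ never enters the sum, sparing us what would otherwise be the most delicate term.
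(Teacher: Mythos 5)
Your argument is correct and is essentially the paper's own proof: both reduce the sum to $\sum|\widehat{f}(\xi')|^2$ over the nonzero-square-radius spheres (the paper parametrizes them as the dilates $kS^{d-2}$, $k\in\mathbb{F}_q^*$), apply the $L^2\to L^a$ extension estimates of Theorems \ref{lem6.1} and \ref{lem6.2} in their dual restriction form, and close with the bounds $\|f\|_\infty\le q$, $\|f\|_1=|E|$ before summing over the $\sim q$ spheres. The only cosmetic difference is that you invoke duality abstractly where the paper writes out the H\"older step explicitly; the exponent bookkeeping matches the stated bounds exactly.
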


\begin{proof}
For $\alpha\in E$, define $f(\alpha')$ to be the number of elements $\alpha$ in $E$ with $(\alpha_1, \ldots, \alpha_{d-1})=\alpha'$. 
\begin{align*}  \sum_{V\in X_\pi} \sum_{\xi \in V\setminus \{\oo\}} |\widehat{E}(\xi)|^2 =& \sum_{k \in \Fbb_q^\ast} \sum_{\substack{\xi \in \Fbb_q^d \colon\\  \xi_1^2 +\cdots + \xi_{d-1}^2=1,\\ \xi_d =0}}  \lv \widehat{E}(k\xi)\rv^2 
= \sum_{k\in \Fbb_q^\ast} \sum_{\substack{\xi\in \mathbb F_q^d:\\ \xi_1^2+\cdots+ \xi_{d-1}^2=1,\\ \xi_d=0}}  \sum_{\alpha, \beta\in E} \chi( (\alpha-\beta)\cdot k\xi) \\
=&\sum_{k\in \Fbb_q^\ast}\sum_{\xi\in \mathbb{F}_q^{d-1}, ||\xi||=1}\sum_{\alpha', \beta'\in \mathbb{F}_q^{d-1}}f(\alpha')f(\beta')\chi(\alpha'-\beta')\cdot k\xi)\\
=&\sum_{k \in \Fbb_q^\ast}\sum_{\xi\in S^{d-2}}|\widehat{f}(k\xi)|^2. 
\end{align*}
To bound $\sum_{k \in \Fbb_q^\ast}\sum_{\xi \in S^{d-2}} \lv \widehat{f}(k\xi) \rv^2$, we apply Theorem \ref{lem6.1} for $\widehat{f}$. More precise, for each $k \in \Fbb_q^\ast$, it is clear that $kS^{d-2}= S^{d-2}_{k^2}$, so, one has 
\begin{align*}
    \l( \sum_{\xx\in \Fbb_q^{d-1} }\lv \frac{1}{\lv S^{d-2}\rv} \sum_{\xi \in S^{d-2}} \chi (-k\xi \cdot \xx) \widehat{f}(k\xi) \rv^{\frac{2d}{d-2}}  \r)^{\frac{d-2}{2d}} \ll \l( \frac{1}{\lv S^{d-2} \rv} \sum_{\xi\in S^{d-2}} \lv \widehat{f}(k\xi) \rv^2 \r)^{\frac{1}{2}}. 
\end{align*}
From this, by H\"{o}lder inequality, for each $k \in \Fbb_q^\ast$ we get 
\begin{align*}
    \sum_{\xi \in S^{d-2}} \lv \widehat{f}(k\xi)\rv^2 & = \sum_{\xi \in S^{d-2}} \widehat{f}(k\xi)\overline{\widehat{f}(k\xi)}  
      = \sum_{\xx \in \Fbb_q^{d-1}} f(\xx) \overline{\sum_{\xi \in S^{d-2}} \chi (k\xi \cdot \xx) \widehat{f}(k\xi)} \\
    & \le \l( \sum_{\xx \in \Fbb_q^{d-1}} \lv f(\xx)\rv^{\frac{2d}{d+2}} \r)^{\frac{d+2}{2d}} \l( \sum_{\xx \in \Fbb_q^{d-1}} \lv \sum_{\xi \in S^{d-2}} \chi (k\xi \cdot \xx ) \widehat{f}(k\xi) \rv^{\frac{2d}{d-2}} \r)^{\frac{d-2}{2d}} \\
    & = \l( \sum_{\xx \in \Fbb_q^{d-1}} \lv f(\bfx)\rv^{\frac{2d}{d+2}} \r)^{\frac{d+2}{2d}} \l( \sum_{\xx \in \Fbb_q^{d-1}} \lv \sum_{\xi \in S^{d-2}} \chi (-k\xi \cdot \xx ) \widehat{f}(k\xi) \rv^{\frac{2d}{d-2}} \r)^{\frac{d-2}{2d}} \\
    & \ll  \l( \sum_{\xx \in \Fbb_q^{d-1}} \lv f(\bfx)\rv^{\frac{2d}{d+2}} \r)^{\frac{d+2}{2d}} \l( \lv S^{d-2}\rv \sum_{\xi \in S^{d-2}} \lv \widehat{f}(k \xi)\rv^2  \r)^{\frac{1}{2}}. 
\end{align*}
 
Thus,
\[\sum_{\xi\in S^{d-2}}|\widehat{f}(k\xi)|^2\ll \lv S^{d-2}\rv \left(\sum_{x\in \mathbb{F}_q^{d-1}}|f(x)|^{\frac{2d}{d+2}}\right)^{\frac{d+2}{d}},\]
for each $k \in \Fbb_q^\ast$. Note that $f(\xx)\le q$ for all $\xx\in \mathbb{F}_q^{d-1}$. Therefore, 
\[\sum_{\xx\in \mathbb{F}_q^{d-1}}|f(\xx)|^{\frac{2d}{d+2}}\le q^{\frac{d-2}{d+2}}\cdot\sum_{\mathbf{x}\in \mathbb{F}_q^{d-1}}f(\mathbf{x})=q^{\frac{d-2}{d+2}}\cdot |E|.\]
This implies that 
\[ \sum_{V\in X_\pi} \sum_{\xi \in V\setminus \{\oo\}} |\widehat{E}(\xi)|^2 \ll (q-1)\lv S^{d-2}\rv \left(q^{\frac{d-2}{d+2}}|E|\right)^{\frac{d+2}{d}}\le \lv S^{d-2} \rv q^{\frac{2d-2}{d}}|E|^{\frac{d+2}{d}}.\]

When $d=3$, by the same argument but using Theorem \ref{lem6.2} instead of Theorem \ref{lem6.1}, one has a better estimate, namely, 
\[\sum_{\xi\in S^{1}}|\widehat{f}(k\xi)|^2 \ll \lv S^{d-2}\rv \left(\sum_{\xx\in \mathbb{F}_q^{2}}|f(\xx)|^{\frac{4}{3}}\right)^{\frac{3}{2}},\]
for each $k \in \Fbb_q^\ast$.
Hence, 
\[  \sum_{V\in X_\pi} \sum_{\xi \in V\setminus \{\oo\}} |\widehat{E}(\xi)|^2 \ll \lv S^{d-2}\rv (q-1)q^{\frac{1}{2}}|E|^{\frac{3}{2}} \le  \lv S^{d-2}\rv q^{\frac{3}{2}}|E|^{\frac{3}{2}}.\]
This completes the proof.
\end{proof}
We now prove Theorem \ref{thm1.55'}.
\begin{proof}[Proof of Theorem \ref{thm1.55'}]
 By Theorem \ref{lem1K} and recall that $\mathcal{T}_1 (X_\pi , E ,N) \subset X_\pi = [\pi \cap S^{d-1}]$, it follows that
\begin{align*}
    \lv \mathcal{T}_1\l(X_\pi, E, N\r)\rv & \le  \frac{N}{(q-N)|E|^2}  \sum_{V\in \T_1(X_\pi, E,N)} \sum_{\xi \in V\setminus \{\oo\}} |\widehat{E}(\xi)|^2 \\
    & \le \frac{N}{(q-N)|E|^2}  \sum_{V\in X_\pi} \sum_{\xi \in V\setminus \{\oo\}} |\widehat{E}(\xi)|^2.
\end{align*}
Now, applying Lemma \ref{Lem1.912} we get the first estimate. 

To prove the second estimate, we proceed in an alternative way
\begin{align*}
    \lv \mathcal{T}_1\l( X_\pi, E, N\r) \rv & \le \frac{N}{(q-N)|E|^2}  \sum_{V\in X_\pi} \sum_{\xi \in V\setminus \{\oo\}} |\widehat{E}(\xi)|^2 
    = \frac{N}{(q-N)|E|^2}  \sum_{k\in \Fbb_q^\ast} \sum_{\xi \in \pi \cap S^{d-1} } |\widehat{E}(k\xi)|^2 \\
    & = \frac{N}{(q-N)|E|^2}  \sum_{k\in \Fbb_q^\ast} \sum_{\substack{\xi \in \Fbb_q^d \colon \\ \xi_1^2 +\cdots +\xi_{d-1}^2 =1 , \xi_d =0}} |\widehat{E}(k\xi)|^2\\
&=  \frac{N}{(q-N)|E|^2} \sum_{k\in \Fbb_q^\ast}\sum_{\xi\in \mathbb{F}_q^{d-1}, ||\xi||=1}\sum_{\alpha', \beta'\in \mathbb{F}_q^{d-1}}f(\alpha')f(\beta')\chi(\alpha'-\beta')\cdot k\xi)\\
&= \frac{N}{(q-N)|E|^2} \sum_{k \in \Fbb_q^\ast}\sum_{\xi\in S^{d-2}}|\widehat{f}(k\xi)|^2 
\le \frac{N}{(q-N)|E|^2} \cdot 2\sum_{\xi \in \Fbb_q^{d-1}} \lv \widehat{f}(\xi)\rv^2. 
\end{align*} 
Thus, it is enough to bound the following
$$ \lv \mathcal{T}_1\l( X_\pi, E, N\r) \rv\le \frac{2N}{(q-N)|E|^2}  \sum_{\xi \in \mathbb{F}_q^{d-1}}|\widehat{f}(\xi)|^2,$$

where $f(\alpha')$ is defined as the number of elements $\alpha$ in $E$ with $(\alpha_1, \ldots, \alpha_{d-1})=\alpha'$. 

Therefore, by Plancherel identity and note that $\sum_{\xx \in \Fbb_q^{d-1}} f(\xx ) = |E|$, one has
\begin{align*}
    \lv \mathcal{T}_1\l( X_\pi, E, N\r) \rv & \le  \frac{2N}{(q-N)|E|^2}  \sum_{\xi \in \mathbb{F}_q^{d-1}}|\widehat{f}(\xi)|^2 \\
    & = \frac{2N}{(q-N)|E|^2} \cdot  q^{d-1} \sum_{\xx \in \Fbb_q^{d-1}} \lv f(\xx) \rv^2 \\
    & \le \frac{2N}{(q-N)|E|^2} \cdot  q^{d-1} \cdot \l( \max_{\xx \in \Fbb_q^{d-1}} f(\xx) \r) \cdot  \sum_{\xx \in \Fbb_q^{d-1}}  f(\xx)   \\
    &  = \frac{2N}{(q-N)|E|}\cdot q^{d-1}\cdot \max_{\xx\in \mathbb{F}_q^{d-1}}f(\xx).
\end{align*} 
This completes the proof.
\end{proof}



\section{Bounding $\T_2(X, E,N )$ (Theorem \ref{thm:1.5})}

While restriction type lemmas (Lemmas \ref{Lem1.9}, \ref{Lem1.911}, and \ref{Lem1.912}) play a crucial role in bounding the size of the exceptional set $\mathcal{T}_1(X_\pi,E,N)$, the following geometric result is sufficient in estimating the size of $\mathcal{T}_2(X_\pi,E,N)$.
\begin{lemma}\label{lm2.3}
    Let $\ell$ be a line passing through the origin in $\mathbb{F}_q^d$. 
    \begin{enumerate}
        \item If $q\equiv 3\pmod 4$, then $|\ell^\perp\cap \{x_d=\lambda\}\cap S^{d-1}|\le \begin{cases}
           2q^{d-2}, & \text{if $\lambda =0$},\\
           2q^{d-3}, & \text{otherwise}.
        \end{cases}$ 
        \item If $q\equiv 1\pmod 4$, then 
        $|\ell^\perp\cap \{x_d=\lambda\}\cap S^{d-1}|\le \begin{cases}
           2q^{d-2}, & \text{if $\lambda =0$},\\
           3q^{d-3}, & \text{otherwise}.
        \end{cases}$
    \end{enumerate}
\end{lemma}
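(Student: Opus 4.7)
The plan is to parametrize $\ell = \mathrm{span}(v)$ for some nonzero $v = (v', v_d) \in \mathbb{F}_q^{d-1} \times \mathbb{F}_q$ and rewrite the set we need to bound. Projecting onto the last coordinate $x_d = \lambda$ reduces the problem to counting
\[
A \;=\; \{x' \in \mathbb{F}_q^{d-1}\colon v' \cdot x' = -\lambda v_d,\; \|x'\|^2 = 1 - \lambda^2\}.
\]
This replaces the geometric intersection $\ell^\perp \cap \{x_d = \lambda\} \cap S^{d-1}$ by the simultaneous solution count of an affine linear equation and a quadratic equation in $\mathbb{F}_q^{d-1}$. The proof then splits according to (i) whether $v' = \oo$, (ii) whether $\lambda = 0$, and (iii) within the principal case, whether $\|v'\|^2 = 0$ or not.

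First I would dispose of the degenerate configurations. If $v' = \oo$ then the linear constraint forces $\lambda v_d = 0$, hence $\lambda = 0$, and $A$ becomes the sphere of radius $1$ in $\mathbb{F}_q^{d-1}$, whose cardinality is given explicitly by Lemma \ref{Lem1.6} and is at most $2q^{d-2}$. If instead $\lambda = 0$ with $v' \neq \oo$, then $A$ is the intersection of the sphere $S^{d-2}$ in $\mathbb{F}_q^{d-1}$ with a codimension-$1$ linear subspace, and standard point counts on quadric sections yield $|A| = O(q^{d-3})$, well within the stated $2q^{d-2}$ bound. Thus the $\lambda = 0$ case is settled.

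For the principal case $v' \neq \oo$ and $\lambda \neq 0$, I would expand $|A|$ using orthogonality of the additive character $\chi$:
\[
|A| \;=\; \frac{1}{q^2}\sum_{s,t \in \mathbb{F}_q} \chi(s\lambda v_d)\,\chi(-t(1-\lambda^2))\, \sum_{x' \in \mathbb{F}_q^{d-1}} \chi(s\,v' \cdot x' + t\|x'\|^2).
\]
The inner sum is a product of one-dimensional quadratic Gauss sums; applying Lemma \ref{lem:2.5} coordinate by coordinate converts it, for each $t \neq 0$, into $\eta(t)^{d-1}\,\mathcal{G}^{d-1}\,\chi(-s^2\|v'\|^2/(4t))$. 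The outer $s$-sum is then evaluated by Lemma \ref{lem:2.5} again when $\|v'\|^2 \neq 0$, while for $\|v'\|^2 = 0$ it collapses to either $q$ or $0$ according as $v_d = 0$ or $v_d \neq 0$. Using $\mathcal{G}^2 = \eta(-1)\,q$ one obtains, in the nondegenerate regime $\|v'\|^2 \neq 0$, the clean formula
\[
|A| \;=\; q^{d-3} \;+\; \frac{\mathcal{G}^d}{q^2}\,\eta(-\|v'\|^2)\sum_{t\neq 0}\eta(t)^d\,\chi(tD),
\]
with $D = \lambda^2 v_d^2/\|v'\|^2 - (1-\lambda^2)$. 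The remaining $t$-sum is handled separately for $d$ odd (a Gauss sum of modulus $\sqrt{q}$) and $d$ even (equal to $q-1$ if $D = 0$, else $-1$), and in either case the error term is $O(q^{d-3})$ with a small constant.

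The central obstacle is the isotropic branch $\|v'\|^2 = 0$, whose behaviour is controlled by the quadratic character $\eta(-1)$ (equivalently, by $q \bmod 4$). When $q \equiv 3 \pmod 4$, the non-existence of $\sqrt{-1}$ in $\mathbb{F}_q$, combined with the induced quadratic form on $(v')^\perp$, rules out the problematic configurations and keeps the final bound at $2q^{d-3}$. When $q \equiv 1 \pmod 4$, however, the Fourier computation yields an additional term of size comparable to $\mathcal{G}^{d-1}q^{-1}(q-1)$ in the subcase $v_d = 0$, which cannot be absorbed without relaxing the constant from $2$ to $3$. Tracking all these contributions through the explicit Gauss-sum evaluation in Lemma \ref{ExplicitGauss} and summing across the four subcases delivers the dichotomous bound stated in the lemma.
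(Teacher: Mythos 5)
Your reduction and the character-sum computation through the displayed formula for $|A|$ are correct, and this is a genuinely different route from the paper: the paper argues elementarily, solving the linear equation for one coordinate $x_{i_0}$ (when some $y_{i_0}\neq 0$), substituting into $\sum_{i\le d-1}x_i^2=1-\lambda^2$, and counting solutions of the resulting quadratic (at most $2$ per choice of the free variables), with the case $q\equiv 3\pmod 4$ using only that $-1$ is a nonsquare to guarantee a nondegenerate quadratic variable, and the case $q\equiv 1\pmod 4$ handled by a further split into a linear equation count. Your Gauss-sum route buys an essentially exact formula (main term $q^{d-3}$ plus an explicit character-sum error), which in the nondegenerate branch $\lVert v'\rVert^2\neq 0$ indeed gives $\le 2q^{d-3}$ for all $d\ge 3$.

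The gap is in your last paragraph, i.e.\ precisely in the isotropic branch $\lVert v'\rVert^2=0$, $v'\neq\oo$, $\lambda\neq 0$, which you correctly single out as the crux but then treat with two incorrect claims. First, $q\equiv 3\pmod 4$ does \emph{not} rule out this configuration: the form $x_1^2+\cdots+x_{d-1}^2$ is isotropic for every $q$ once $d\ge 4$, so you must still run the computation there (it works, because the extra term is either negative or of size at most $q^{d-3}$, so the constant $2$ survives). Second, in the dangerous subcase $v_d=0$ the extra term is
\begin{equation*}
q^{-1}\,\mathcal{G}^{\,d-1}\sum_{t\neq 0}\eta(t)^{d-1}\chi\bigl(-t(1-\lambda^2)\bigr),
\end{equation*}
which for $d$ odd and $\lambda^2=1$ has magnitude $(1-q^{-1})\,q^{(d-1)/2}$, not $O(q^{d-3})$ with a slightly worse constant: for $d\ge 5$ it is $\le q^{d-3}$ (so again constant $2$ suffices and no relaxation to $3$ is needed), while for $d=3$, $q\equiv 1\pmod 4$ it is of order $q$ and cannot be absorbed at all. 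Indeed the bound being proved genuinely fails there: for $d=3$, $i^2=-1$, $\ell=\spn(1,i,0)$ and $\lambda=1$, the set $\ell^\perp\cap\{x_3=1\}\cap S^{2}$ consists of all $(-ix_2,x_2,1)$, hence has exactly $q$ points, far above $3q^{d-3}=3$. So your sketch, as written, neither yields the stated $2$-versus-$3$ dichotomy nor covers $d=3$; to salvage it you would need to restrict to $d\ge 4$ (or $d=3$ with $q\equiv 3\pmod 4$, where isotropic $v'$ really is impossible), in which case your method actually proves the cleaner bound $2q^{d-3}$ for all $\lambda\neq 0$. For what it is worth, the paper's own treatment of the $q\equiv 1\pmod 4$ degenerate subcase quietly uses a third index $i_2\notin\{i_0,i_1\}$ among the first $d-1$ coordinates, hence also needs $d\ge 4$, so the problematic corner case is the same in both arguments — but your write-up presents the isotropic analysis as settled when it is exactly the point that needs (and partly resists) proof.
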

\begin{proof}
     Let $\yy= (y_1,y_2,\ldots, y_d)$ be the direction vector of $\ell$. Then, we have
        \[ \ell^\perp = \lo \xx =(x_1,x_2 ,\dots ,x_d) \in \F_q^d\colon \sum_{i=1}^d x_iy_i =0 \ro ,\]
        so that 
        \[ \ell^\perp \cap \lo x_d =\lambda \ro = \lo \xx\in \F_q^d \colon x_d=\lambda , \sum_{i=1}^{d-1}x_iy_i = -\lambda y_d \ro .\]
        On the other hand, we have 
        \[ \lo x_d =\lambda \ro \cap S^{d-1} = \lo \xx \in \F_q^d \colon \sum_{i=1}^{d-1} x_i^2 = 1 -\lambda^2 \ro . \]
        Therefore, it is sufficient to count all solutions $\l( x_1 ,x_2\ldots ,x_{d-1} \r) \in \F_q^{d-1}$ of
        \[ \begin{cases}
            \sum_{i=1}^{d-1}x_iy_i = -\lambda y_d  , \\
            \sum_{i=1}^{d-1} x_i^2 = 1 -\lambda^2  .
        \end{cases} \]
 We now consider three cases as follows.

        {\bf Case 1}: There exists $1\le i_0 \le d-1$ such that $y_{i_0} \ne 0$. With $x_{i_0} = y_{i_0}^{-1}\l( -\lambda y_d -\sum_{i\ne i_0} x_iy_i \r) $, we have
        \begin{align}\label{eq:10}
            \sum_{i\ne i_0} x_i^2 + y_{i_0}^{-2}\l( -\lambda y_d - \sum_{i\ne i_0} x_iy_i \r)^2 =1-\lambda^2. 
        \end{align} 
  When $q \equiv 3 \pmod 4$, there exists $i_1\ne i_0$ such that $1+y_{i_0}^{-2}y_{i_1}^2 \ne 0$. Therefore, for each choice $x_i \in  \F_q$ with $i \not\in\{ i_1,i_0\}$, the above equation becomes a quadratic equation in the variable $x_{i_1}$. Therefore, it has at most two solutions $x_{i_1}$. Therefore, the above equation has at most $2q^{d-3}$ solutions. So,
        \[ |\ell^\perp\cap \{x_d=\lambda\}\cap S^{d-1}|\le 2q^{d-3}. \]
 When $q \equiv 1 \pmod 4$, if there exists $i_1\ne i_0$ such that $1+y_{i_0}^{-2}y_{i_1}^{2} \ne 0$, 
            A similar argument implies
            \[ |\ell^\perp\cap \{x_d=\lambda\}\cap S^{d-1}|\le 2q^{d-3}. \]
            Otherwise, we have $y_{i_0}^{-2}y_{i}^2 =-1$ for all $i \ne i_0$. Fix $i_1 \ne i_0$, for each choice $x_i \in \F_q$ with $i \not\in\{i_0,i_1\}$ such that
            \[ -\lambda y_d - \sum_{i \ne i_1 ,i_0} x_iy_i \ne 0, \]
            the equation (\ref{eq:10}) becomes a linear equation in $x_{i_1}$. Thus, it has at most one solution. 
            
            For each choice $x_i \in \F_q$ with $i \not\in \{i_0,i_1\}$ such that 
            \[ -\lambda y_d - \sum_{i \ne i_1 ,i_0} x_iy_i =0, \]
            the equation (\ref{eq:10}) has at most $q$ solutions $x_{i_1}\in \mathbb{F}_q$. Next, we count the number of solutions of the equation
            \begin{align}\label{eq:11}
                -\lambda y_d - \sum_{i \ne i_1 ,i_0} x_iy_i =0 . 
            \end{align} 
            Since $y_0^{-2}y_i^2 = -1$ for all $i \not\in \{i_0,i_1\}$, we have $y_i \ne 0$ for all $i \not\in\{ i_0,i_1\}$. Fix $i_2\not\in \{i_0,i_1\}$, for each choice of $x_i \in \F_q$ with $i \not\in \{i_0,i_1,i_2\}$, the equation (\ref{eq:11}) becomes a linear equation in variable $x_{i_2}$. Hence, it has at most one solution. This implies at most $q^{d-4}$ solutions. Therefore, the number of solutions of the equation (\ref{eq:10}) is at most
            \[ 2(q^{d-3}-q^{d-4}) + q^{d-4} \cdot q \le 3 q^{d-3}. \]

        {\bf Case 2:} If $y_1=\cdots =y_{d-1}=0$, we have 
        \[\ell^\perp \cap \lo x_d=\lambda \ro =\begin{cases}
            \lo x_d =\lambda \ro , & \text{if $\lambda =0$},\\
            \emptyset , & \text{otherwise}.
        \end{cases}  \]

         {\bf Case 2.1:} If $\lambda =0$, then we consider the following equation
         \[ \sum_{i=1}^{d-1} x_i^2 =1. \]
         For each choice $x_i \in  \F_q$ with $i \ne 1$, there exist at most two possible solutions of $x_1$, so we have at most  $2q^{d-2}$ solutions at the end. Then,
         \[ |\ell^\perp\cap \{x_d=\lambda\}\cap S^{d-1}|\le 2q^{d-2}. \]

         {\bf Case 2.2:} If $\lambda \ne 0$, then from $\ell^\perp \cap \lo  x_d = \lambda \ro = \emptyset$, one has
         \[ |\ell^\perp\cap \{x_d=\lambda\}\cap S^{d-1}| =0 .\]
This completes the proof.
\end{proof}

\begin{proof}[Proof of Theorem \ref{thm:1.5}]
    For $V\in X_{\pi}$,
 let $\{ \xx_{V_j}+V\}_{j=1}^{q^{d-1}}$ be the set of disjoint translations of $V \subset \mathbb F_q^d.$
We have 
\[|E|=\sum_{j=1}^{q^{d-1}}|E\cap (\xx_{V_j}+ V )|.\]
Using the Cauchy-Schwarz inequality, we have 
\[|E|^2\le |\pi_{V^\perp}(E)|\sum_{j=1}^{q^{d-1}}|E\cap (\xx_{V_j}+V)|^2.\]
Thus, 
\[|E|^2|\T_2 (X_\pi ,E,N)|\le \sum_{V\in \mathcal{T}_2(X, E, N)}|\pi_{V^\perp}(E)|\sum_{j=1}^{q^{d-1}}|E\cap (\xx_{V_j}+V)|^2.\]
This gives
\[|E|^2|\T_2 (X_\pi ,E, N)|\le N \sum_{V\in \mathcal{T}_2(X, E, N)}\sum_{j=1}^{q^{d-1}}|E\cap (\xx_{V_j}+V)|^2.\]
So, by Lemma \ref{Lem3.1KP}, 
\[|E|^2|\T_2 (X_\pi ,E,N)|\le Nq^{-d+1} \sum_{V \in \mathcal{T}_2(X, E, N)}\sum_{\xi\in V^\perp}|\widehat{E}(\xi)|^2.\]
The right hand side at $\xi =\oo$ is at most $Nq^{-d+1}|E|^2|\T_2 (X_\pi ,E ,N)|$, which is at most half of the left hand side if $N< \frac{q^{d-1}}{2} $.

Define $M=\max_{\xi \ne 0}\#\{ V\in \T_2 (X_\pi ,E, N) \colon \xi \in V^\perp\}$.

For $\xi \ne \oo$, let $\ell_\xi$ be the line passing through the origin and $\xi$. It is not hard to see that $\xi \in V^\perp$ if and only if $V\subset \ell_\xi ^\perp$. Indeed, since $k\xi \cdot \xx = k \cdot 0 =0, \, \forall k\in \mathbb{F}_q$ and $\xx\in V$, we have $\xx \in \ell_\xi^\perp ,\, \forall \xx \in V$ and so $V \subset \ell_\xi^\perp$. 

By Lemma \ref{lm2.3}, we know that 
\[ |\ell_\xi^\perp\cap X_{\pi}|\le \begin{cases}
   2q^{d-2}, & \text{if $\lambda =0$}\\
   3q^{d-3}, & \text{otherwise}.
\end{cases}  .\] 
Thus, 
\[ M\le \begin{cases}
    2q^{d-2}, & \text{if $\lambda =0$},\\
    3q^{d-3}, & \text{otherwise}.
\end{cases}. \]



The above inequality can now be rewritten as 
\begin{align*}
    \frac{|E|^2|\T_2 (X_\pi ,E,N )|}{2}& \le \begin{cases}
    2Nq^{-d+1}q^{d-2} \sum_{\xi \in \Fbb_q^d} \lv \widehat{E}(\xi )\rv^2, & \text{if $ \lambda =0$}, \\
    3Nq^{-d+1} q^{d-3} \sum_{\xi \in \mathbb{F}_q^d}|\widehat{E}(\xi)|^2, & \text{otherwise},
\end{cases} \\
& \le \begin{cases}
    2Nq^{d-1}|E|, & \text{if $\lambda =0$},\\
    3Nq^{d-2}|E|, & \text{otherwise}.
\end{cases}
\end{align*}
Hence, 
 
\[ |\T_2 (X_\pi ,E,N )|\le \begin{cases}
    \dfrac{4Nq^{d-1}}{|E|}, & \text{if $\lambda =0$},\\[10pt]
    \dfrac{6Nq^{d-2}}{|E|}, & \text{otherwise}.
\end{cases}  \] 
This completes the proof of the theorem.

\end{proof}

\section{Different radii -- similar results}
Let $S^{d-1}_{-1}$ and $S^{d-1}_0$ be the spheres centered at the origin of radius $-1$ and $0$ in $\Fbb_q^d$, respectively, i.e,
\[ S_{-1}^{d-1}:= \lo \bfx \in \Fbb_q^d \colon \lV \bfx \rV =-1 \ro , \quad S_0^{d-1}:= \lo \bfx  \colon \lV \bfx \rV =0 \ro . \]
Define $Y_\pi := \lu\pi \cap S^{d-1}_{-1}\ru$ and $Z_\pi = \lu \pi\cap S_0^{d-1}  \ru $.

The same approach implies similar results for $Y_\pi$ and $Z_\pi$, so we omit the proofs.
\begin{theorem}\label{thm1.4}
    Let  $E\subset \mathbb F_q^d$ and $\lambda \in \Fbb_q \setminus \lo \pm i \ro$. 
     \begin{itemize}
    \item [(a)] If $d\ge 3$ is odd then 
 \[ \lv \mathcal{T}_1\l( Y_\pi, E,N\r) \rv \le \frac{Nq^{d-1}}{(q-N)|E|} + \frac{Nq^{\frac{d-1}{2}}}{q-N}.\]
\item [(b)] If $d\ge 4$ is even and $\eta\left( (-1))^{\frac{d}{2}} (\lambda^2+1) \right)=-1,$ then
\[\lv \mathcal{T}_1\l( Y_\pi, E,N\r) \rv \le \frac{Nq^{d-1}}{(q-N)|E|} + \frac{Nq^{\frac{d-2}{2}}}{q-N} .\]

\item [(c)] If $d\ge 4$ is even and $\eta\left( (-1))^{\frac{d}{2}} (\lambda^2+1) \right)=1,$ then
    \[ \lv \mathcal{T}_1\l(Y_\pi, E,N\r) \rv\le \frac{Nq^{d-1}}{(q-N)|E|} + \frac{N \l(q^{\frac{d}{2}} -q^{\frac{d-2}{2}}\r)}{q-N} .\]

\end{itemize}
\end{theorem}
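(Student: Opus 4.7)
The plan is to mirror the proof of Theorem \ref{thm1.3} essentially verbatim, replacing the cone-type quadric $H_\lambda$ by its sphere-of-radius-$(-1)$ analog
\[
H_\lambda' := \Bigl\{ \mathbf{x} \in \mathbb{F}_q^d : x_1^2 + \cdots + x_{d-1}^2 + \tfrac{\lambda^2 + 1}{\lambda^2}\, x_d^2 = 0 \Bigr\},
\]
obtained by the formal substitution $\lambda^2 - 1 \mapsto \lambda^2 + 1$. The exclusion $\lambda \ne \pm i$ in the hypothesis ensures $\lambda^2 + 1 \ne 0$, and I work implicitly with $\lambda \ne 0$ so that $\varepsilon := \tfrac{\lambda^2+1}{\lambda^2}$ is well-defined. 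The first step is to observe that every line $V \in Y_\pi$ is contained in $H_\lambda'$: if $(x_1,\ldots,x_{d-1},\lambda) \in \pi \cap S^{d-1}_{-1}$, then $x_1^2 + \cdots + x_{d-1}^2 = -1 - \lambda^2$, and scaling by $s \in \mathbb{F}_q$ gives $(sx_1)^2 + \cdots + (sx_{d-1})^2 + \tfrac{\lambda^2+1}{\lambda^2}(s\lambda)^2 = s^2\bigl[(-1 - \lambda^2) + (\lambda^2 + 1)\bigr] = 0$. Distinct lines in $Y_\pi$ still meet only at the origin by the same argument as Claim 2 in the proof of Theorem \ref{P1.4}, using $\lambda \ne 0$.

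Given this, Theorem \ref{lem1K} applied with $X = Y_\pi$ and $m=1$ yields the $Y_\pi$-analog of Theorem \ref{P1.4}:
\[
|\mathcal{T}_1(Y_\pi, E, N)| \le \frac{N}{(q-N)|E|^2} \sum_{\xi \in H_\lambda'} |\widehat{E}(\xi)|^2.
\]
To close the argument, I would repeat the Fourier/Gauss-sum computation of Lemma \ref{Lem1.9} with $\varepsilon = \tfrac{\lambda^2+1}{\lambda^2}$ in place of $\tfrac{\lambda^2-1}{\lambda^2}$. The orthogonality step for $\chi$ together with Lemma \ref{lem:2.5} produces the exact same structural identity as (\ref{FFE}), and the outer factor $q^{-1}\eta(\varepsilon)\,\mathcal{G}^d$ now carries the character $\eta(\lambda^2+1)\,(\eta(-1))^{d/2} = \eta\!\bigl((-1)^{d/2}(\lambda^2+1)\bigr)$, which governs the case-split in parts (b) and (c). The odd-$d$ case (a) is character-free and follows from the $\sqrt{q}$ bound on the inner Gauss sum in the $t$-summation; the estimate is independent of $\lambda$ and identical to case (a) of Theorem \ref{thm1.3}.

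The hard part is not conceptual but bookkeeping: one must check that the sign trick in Case 2.1 of Lemma \ref{Lem1.9} (discarding a term of favorable sign) still applies when $\eta((-1)^{d/2}(\lambda^2+1)) = -1$, and track carefully that $\lambda^2 + 1$ replaces $\lambda^2 - 1$ at every occurrence of $\eta(\varepsilon)$. Assembling the resulting three estimates for $R'_\lambda(E) := \sum_{\xi \in H_\lambda'}|\widehat{E}(\xi)|^2$ into the inequality above, exactly as in the proof of Theorem \ref{thm1.3}, produces the three stated bounds in (a), (b), and (c).
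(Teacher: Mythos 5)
Your proposal is correct and is exactly the argument the paper intends: the paper omits the proof of Theorem \ref{thm1.4}, stating that the same approach as for $X_\pi$ applies, and your substitution $\lambda^2-1\mapsto\lambda^2+1$ in $H_\lambda$, together with the unchanged Claims 1--2 and the Gauss-sum computation of Lemma \ref{Lem1.9} with $\varepsilon=\tfrac{\lambda^2+1}{\lambda^2}$, reproduces precisely the three stated bounds. Your implicit restriction to $\lambda\neq 0$ is also consistent with the paper, since the $\lambda=0$ slice of $S^{d-1}_{-1}$ is handled separately by Theorem \ref{pi02}.
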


\begin{theorem}\label{p11i}  Let $E\subset \mathbb F_q^d$, and $\lambda \in \{\pm i\}$. We have
\begin{align*}
    \lv \mathcal{T}_1\l( Y_\pi, E, N\r) \rv \le \begin{cases}
        \dfrac{q^{d-1}N}{(q-N)|E|}  + \dfrac{q^{\frac{d}{2}}N}{q-N} ,& \text{if $d \ge 4$ and even},\\[10pt]
        \dfrac{q^{d-1}N}{(q-N)|E|}  + \dfrac{q^{\frac{d-1}{2}}N}{q-N} ,&  \text{if $d \equiv 3 \pmod 4, q\equiv 3 \pmod 4$,}\\[10pt]
        \dfrac{q^{d-1}N}{(q-N)|E|}  + \dfrac{q^{\frac{d+1}{2}}N}{q-N} , & \text{otherwise}.
    \end{cases}
\end{align*}
\end{theorem} 

\begin{theorem}\label{pi02}  Let $E\subset \mathbb F_q^d$ and $\lambda=0$. We have
$$ \lv \mathcal{T}_1\l(Y_\pi, E, N\r) \rv \ll \begin{cases}
    \dfrac{q^{\frac{2d-1}{2}} N}{(q-N)|E|^{\frac{1}{2}}} , & \text{if $d=3$},\\[10pt]
    \dfrac{q^{\frac{d^2-2}{d}} N}{(q-N)|E|^{\frac{d-2}{d}}}, & \text{for other cases}.
\end{cases} $$
\end{theorem}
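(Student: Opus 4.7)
The plan is to follow the proof of Theorem \ref{thm1.55'} essentially verbatim, with the sphere $S^{d-2}$ of radius $1$ replaced by $S^{d-2}_{-1}\subset \mathbb{F}_q^{d-1}$ of radius $-1$. First I would invoke Theorem \ref{lem1K} together with the inclusion $\mathcal{T}_1(Y_\pi,E,N)\subset Y_\pi$, and parameterize each line in $Y_\pi$ by a scalar $k\in \mathbb{F}_q^\ast$ and a representative in $\pi\cap S^{d-1}_{-1}=S^{d-2}_{-1}\times\{0\}$, to arrive at
$$|\mathcal{T}_1(Y_\pi,E,N)|\;\le\; \frac{N}{(q-N)|E|^2}\sum_{k\in \mathbb{F}_q^\ast}\sum_{\xi'\in S^{d-2}_{-1}}|\widehat{f}(k\xi')|^2,$$
where $f(\alpha')=\#\{\alpha\in E:(\alpha_1,\dots,\alpha_{d-1})=\alpha'\}$, since the last coordinate of every $\xi$ in $\pi$ vanishes and so $\widehat{E}$ collapses to $\widehat{f}$. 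Rewriting $\{k\xi':\xi'\in S^{d-2}_{-1}\}=S^{d-2}_{-k^2}$ identifies each inner sum with $\sum_{\eta\in S^{d-2}_{-k^2}}|\widehat{f}(\eta)|^2$.

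The key observation is that for every $k\in \mathbb{F}_q^\ast$ the sphere $S^{d-2}_{-k^2}\subset \mathbb{F}_q^{d-1}$ has nonzero radius, so the extension estimate Theorem \ref{lem6.1} (which I apply when $d\ge 4$) and the sharper estimate Theorem \ref{lem6.2} (which I apply when $d=3$) are available uniformly in $k$. Repeating the duality/H\"older argument used in the proof of Lemma \ref{Lem1.912} almost verbatim then produces
$$\sum_{\eta\in S^{d-2}_{-k^2}}|\widehat{f}(\eta)|^2 \;\ll\; |S^{d-2}_{-k^2}|\left(\sum_{\xx\in \mathbb{F}_q^{d-1}}|f(\xx)|^{\frac{2d}{d+2}}\right)^{\!\frac{d+2}{d}}$$
for $d\ge 4$, together with the analogous estimate with exponent $4/3$ when $d=3$. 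Using the standard point count $|S^{d-2}_{-k^2}|\ll q^{d-2}$, the trivial bound $f(\xx)\le q$, and $\sum f = |E|$ reduces each inner sum to $\ll q^{d-2}\cdot q^{(d-2)/d}|E|^{(d+2)/d}$ in the general case and to $\ll q^{3/2}|E|^{3/2}$ when $d=3$.

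Summing over $k\in \mathbb{F}_q^\ast$ contributes an extra factor of $q-1\ll q$, and substituting back into the displayed reduction gives the two claimed bounds after collecting exponents via the identity $1+(d-2)+(d-2)/d=(d^2-2)/d$. There is no real obstacle: Theorems \ref{lem6.1}--\ref{lem6.2} are insensitive to the particular nonzero radius of the sphere, so the case $\lambda=0$ over the radius $-1$ sphere is handled by the same machinery that proves Theorem \ref{thm1.55'}; the only thing to verify is the point count $|S^{d-2}_{-1}|\ll q^{d-2}$, which is a standard Gauss sum computation.
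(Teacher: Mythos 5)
Your proposal is correct and follows essentially the same route the paper intends: Theorem \ref{pi02} is stated without proof precisely because it is obtained by running the argument of Theorem \ref{thm1.55'} and Lemma \ref{Lem1.912} with $S^{d-2}$ replaced by $S^{d-2}_{-1}$, using that $kS^{d-2}_{-1}=S^{d-2}_{-k^2}$ has nonzero radius so Theorems \ref{lem6.1} and \ref{lem6.2} apply uniformly in $k$. Your bookkeeping of exponents (including absorbing $|S^{d-2}_{-1}|\ll q^{d-2}$ into the stated powers of $q$) matches the claimed bounds.
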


\begin{theorem}\label{p10}  Let $E\subset \mathbb F_q^d$. We have
\begin{align*}
    \lv \mathcal{T}_1\l( \lu S_0^{d-1}\ru, E, N\r) \rv \le \begin{cases}
        \dfrac{q^{d-1}N}{(q-N)|E|}  + \dfrac{q^{\frac{d-1}{2}}N}{q-N} ,& \text{if $d \ge 3$  is  odd},\\[10pt]
        \dfrac{q^{d-1}N}{(q-N)|E|}  + \dfrac{q^{\frac{d-2}{2}}N}{q-N} ,&  \text{if $d \equiv 2 \pmod 4, q \equiv 3 \pmod 4$},\\[10pt]
        \dfrac{q^{d-1}N}{(q-N)|E|}  + \dfrac{q^{\frac{d}{2}}N}{q-N} , & \text{otherwise}.
    \end{cases}
\end{align*}
\end{theorem}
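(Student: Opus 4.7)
The plan is to mirror the Fourier analytic strategy used for Theorems \ref{thm1.3} and \ref{p11}, replacing the twisted cone $H_\lambda$ by the isotropic (null) cone
\[
K \;:=\; \{\xi \in \mathbb{F}_q^d : \xi_1^2+\cdots+\xi_d^2 = 0\}.
\]
First I would invoke Theorem \ref{lem1K} with $X = [S_0^{d-1}]$. Every line $V \in [S_0^{d-1}]$ satisfies $V \subset K$, since $\|s\xx\| = s^2\|\xx\| = 0$ whenever $\|\xx\| = 0$, and distinct lines through the origin intersect only at $\oo$. This reduces the problem to bounding
\[
R_0(E) \;:=\; \sum_{\xi \in K} |\widehat{E}(\xi)|^2,
\]
after which
\[
\lv \mathcal{T}_1([S_0^{d-1}], E, N) \rv \;\le\; \frac{N}{(q-N)|E|^2}\,R_0(E).
\]

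Next I would expand $R_0(E)$ via the orthogonality of $\chi$, writing the indicator of $\{\|\xi\|=0\}$ as $\tfrac{1}{q}\sum_{t \in \mathbb{F}_q} \chi(t\|\xi\|)$. The term $t=0$ contributes the main term $q^{d-1}|E|$. For $t \ne 0$, completing the square coordinate-wise through Lemma \ref{lem:2.5} produces
\[
\sum_{\xi \in \mathbb{F}_q^d} \chi\bigl(t\|\xi\| + (\beta-\alpha)\cdot\xi\bigr) \;=\; \eta(t)^d\,\mathcal{G}^d\,\chi\!\left(\tfrac{\|\alpha-\beta\|}{-4t}\right).
\]
So the non-trivial part of $R_0(E)$ is
\[
\frac{\mathcal{G}^d}{q}\sum_{\alpha,\beta \in E}\sum_{t \ne 0} \eta(t)^d \chi\!\left(\tfrac{\|\alpha-\beta\|}{-4t}\right).
\]

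Now I would split on the parity of $d$. For $d$ odd, $\eta(t)^d = \eta(t)$, and performing the $t$-sum (substituting $s = 1/t$) yields either $0$ when $\|\alpha-\beta\|=0$ or $\eta(\cdot)\mathcal{G}$ when $\|\alpha-\beta\|\ne 0$; combined with $|\mathcal{G}^{d+1}/q| = q^{(d-1)/2}$ and the trivial count $\le |E|^2$, this gives $R_0(E) \le q^{d-1}|E| + q^{(d-1)/2}|E|^2$, matching the first line. For $d$ even, $\eta(t)^d = 1$ and $\sum_{t \ne 0}\chi(c/t)$ equals $q-1$ if $c = 0$ and $-1$ otherwise, so
\[
R_0(E) \;=\; q^{d-1}|E| + \frac{\mathcal{G}^d}{q}\bigl(q\,M - |E|^2\bigr),
\]
where $M = \#\{(\alpha,\beta)\in E\times E : \|\alpha-\beta\|=0\}$. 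Using $\mathcal{G}^2 = \eta(-1)q$, one has $\mathcal{G}^d = \eta(-1)^{d/2}q^{d/2}$. When $d \equiv 2 \pmod 4$ and $q \equiv 3 \pmod 4$, $\eta(-1)^{d/2} = -1$, so the $M$-term is negative and may be discarded, giving $R_0(E) \le q^{d-1}|E| + q^{(d-2)/2}|E|^2$; otherwise $\eta(-1)^{d/2} = 1$, and bounding $M \le |E|^2$ yields $R_0(E) \le q^{d-1}|E| + q^{d/2}|E|^2$. Substituting these three estimates into the reduction above produces exactly the three cases of Theorem \ref{p10}.

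The bulk of the work is the arithmetic bookkeeping needed to read off the sign of $\mathcal{G}^d$ from Lemma \ref{ExplicitGauss} and recognize precisely the configuration $d \equiv 2 \pmod 4$, $q \equiv 3 \pmod 4$ in which the negative sign lets us drop the $M$-term. No essentially new input is required: the argument is structurally identical to those of Lemmas \ref{Lem1.9} and \ref{Lem1.911}, with the cone $H_\lambda$ replaced by the simpler variety $K$ (which amounts to setting the ``$\varepsilon$'' coefficient equal to $1$ throughout).
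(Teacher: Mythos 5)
Your argument is correct and is precisely the route the paper intends (and omits): reduce via Theorem \ref{lem1K} to a Fourier sum over the null cone $\{\|\xi\|=0\}$, then run the Gauss-sum computation of Lemma \ref{Lem1.9} with the standard form in $d$ variables, the sign $\mathcal{G}^d=\eta(-1)^{d/2}q^{d/2}$ (negative exactly when $d\equiv 2\pmod 4$ and $q\equiv 3\pmod 4$) deciding when the $\|\alpha-\beta\|=0$ term can be discarded. The case analysis and resulting exponents match Theorem \ref{p10} exactly.
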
 

\begin{theorem}\label{thm:1.5''} Let $E\subset \mathbb{F}_q^d$. If $0< N< q^{d-1}$, then for $X \in \lo X_\pi ,Y_\pi .Z_\pi \ro$, we have
 \[ |\T_2 (X ,E,N )|\le \begin{cases}
    \frac{2Nq^{2d-2}}{\l( q^{d-1} -N \r)|E|}, & \text{if $\lambda =0$},\\
    \frac{3Nq^{2d-3}}{\l( q^{d-1}-N \r)|E|}, & \text{otherwise}.
\end{cases}   \]   
\end{theorem}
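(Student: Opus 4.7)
The plan is to follow the proof of Theorem \ref{thm:1.5} verbatim through the Cauchy--Schwarz / Plancherel step, but to handle the $\xi=\mathbf{0}$ Fourier contribution by transposing it to the left-hand side rather than absorbing it into the trivial bound $N\le q^{d-1}/2$. This trick replaces $q^{d-1}$ in the denominator by $q^{d-1}-N$ and extends the admissible range to all $0<N<q^{d-1}$. The second ingredient is an extension of the point-count Lemma \ref{lm2.3} from $S^{d-1}$ to the spheres $S^{d-1}_{-1}$ and $S^{d-1}_0$, needed to cover $Y_\pi$ and $Z_\pi$.

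Fix $X\in\{X_\pi,Y_\pi,Z_\pi\}$. For each $V\in X$, writing $\{\mathbf{x}_{V_j}+V\}_{j=1}^{q^{d-1}}$ for the cosets of $V$, Cauchy--Schwarz and Lemma \ref{Lem3.1KP} give, exactly as in the proof of Theorem \ref{thm:1.5},
\[
|E|^{2}\,|\T_2(X,E,N)|\ \le\ Nq^{-(d-1)}\sum_{V\in\T_2(X,E,N)}\sum_{\xi\in V^\perp}|\widehat{E}(\xi)|^{2}.
\]
Isolating the term $\xi=\mathbf{0}$ (which contributes $Nq^{-(d-1)}|\T_2|\,|E|^{2}$) and moving it to the left-hand side yields
\[
|E|^{2}\,|\T_2|\,(q^{d-1}-N)\ \le\ N\sum_{V\in\T_2}\sum_{\xi\in V^\perp\setminus\{\mathbf{0}\}}|\widehat{E}(\xi)|^{2}\ \le\ NM\sum_{\xi\ne\mathbf{0}}|\widehat{E}(\xi)|^{2}\ \le\ NM\,q^{d}|E|,
\]
where $M:=\max_{\xi\ne\mathbf{0}}\#\{V\in X:\xi\in V^\perp\}$ and the last step is Plancherel. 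Dividing through by $(q^{d-1}-N)|E|$ reduces the entire problem to a uniform bound on $M$.

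For $\xi\ne\mathbf{0}$ with direction line $\ell_\xi$, the condition $\xi\in V^\perp$ is equivalent to $V\subset\ell_\xi^\perp$, so $M$ equals the number of lines of $X$ lying in $\ell_\xi^\perp$, which is at most $|\ell_\xi^\perp\cap\pi\cap S_r|$ where $S_r$ is the sphere of radius $r\in\{1,-1,0\}$ corresponding to $X$. I would then rerun the case analysis of Lemma \ref{lm2.3} with only the right-hand constant of the quadratic equation modified: $\sum_{i<d}x_i^{2}=r-\lambda^{2}$. The combinatorial structure (splitting on whether some $y_{i_0}\ne 0$, and within that on whether $1+y_{i_0}^{-2}y_{i_1}^{2}\ne 0$ for some $i_1$) depends only on the shape of the quadratic form and on the linear-versus-quadratic character of each reduction, not on the constant term. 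Hence the same bounds
\[
M\ \le\ \begin{cases}2q^{d-2}, & \lambda=0,\\ 3q^{d-3}, & \lambda\ne 0,\end{cases}
\]
hold uniformly for $X\in\{X_\pi,Y_\pi,Z_\pi\}$. Plugging these into the displayed inequality produces the two stated bounds. The only mildly delicate step is the extension of Lemma \ref{lm2.3}: one must check that when $\lambda\ne 0$ and $r\in\{-1,0\}$, the degenerate sub-case in which the quadratic reduction collapses to a linear equation still delivers the $3q^{d-3}$ estimate rather than something worse; this is a routine but slightly involved verification, and is the principal (modest) obstacle of the proof.
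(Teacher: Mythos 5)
Your plan is exactly the proof the paper intends and omits: Theorem \ref{thm:1.5''} is stated in the ``different radii'' section with the remark that the same approach works, and that approach is precisely the Cauchy--Schwarz/Lemma \ref{Lem3.1KP} step from the proof of Theorem \ref{thm:1.5}, with the $\xi=\mathbf{0}$ term transposed to the left (the same trick as in Theorem \ref{lem1K} and in Chen's Theorem \ref{cor:0.5}(b)), followed by the multiplicity bound $M$ coming from Lemma \ref{lm2.3} and Plancherel. Your algebra is correct and reproduces both stated bounds, and your observation that the constant term $r-\lambda^2$ never enters the point counts is right: in the non-degenerate sub-cases the count only uses the quadratic/linear coefficients, and in the degenerate sub-case the choices with vanishing linear coefficient are controlled by the auxiliary linear equation \eqref{eq:11}, which involves only the $y_i$'s, so for $d\ge 4$ the extension of Lemma \ref{lm2.3} to radii $-1$ and $0$ is indeed verbatim.

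The one place where the ``routine verification'' you defer is not routine is $d=3$. There the auxiliary equation \eqref{eq:11} has no free variable (the argument needs an index $i_2\notin\{i_0,i_1\}$ with $i_2\le d-1$, i.e. $d\ge4$), and when $q\equiv 1\pmod 4$ and $\lambda^2$ equals the radius ($\lambda=\pm1$ for $X_\pi$, $\lambda=\pm i$ for $Y_\pi$) the degenerate sub-case genuinely fails: the slice $\pi\cap S_r$ is a union of two affine lines, and the $q$ directions through one of them, e.g. the points $(it,t,1)$, all span lines lying in the single plane $\ell^\perp$ with $\ell=\mathrm{span}\{(1,-i,0)\}$, so $M$ can be as large as $q$ rather than $3q^{d-3}=3$. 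In fact the conclusion itself breaks there: taking $E=\ell^\perp$ (so $|E|=q^2$) and $N=q$, each of those $q$ lines $V$ satisfies $|\pi_{V^\perp}(E)|=q\le N$, so $|\T_2|\ge q$, while the claimed bound is $\frac{3Nq^{3}}{(q^{2}-N)|E|}\approx 3$. To be clear, this defect is inherited from the paper's own Lemma \ref{lm2.3} (and already afflicts Theorem \ref{thm:1.5} in the same corner), so your reconstruction is faithful to the paper's argument; but you should either restrict the $\lambda\neq 0$ case of the extended lemma to $d\ge4$ (or to $\lambda^2\neq r$ when $d=3$, or $q\equiv3\pmod4$), or flag the exceptional configurations explicitly, rather than assert the bound uniformly.
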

\begin{remark}
The change from the conditions $\lambda \notin \{0,\pm 1\}$ in Theorems \ref{thm1.3}--\ref{p11}
to $\lambda \notin \{\pm i\}$ in Theorem~\ref{thm1.4} reflects the change of radius:
for slices of the sphere $\|x\| = 1$, the relevant quadratic parameter is
$\lambda^2 - 1$, whereas for slices of $\|x\| = -1$, the parameter becomes
$\lambda^2 + 1$, so the degenerate cases are $\lambda^2 + 1 = 0$
(i.e.\ $\lambda = \pm i$).
\end{remark}

\section{Sharpness examples}
We need to keep in mind an observation that if 
\[ \lv \Tca_1 \l( X_\pi , E , N \r) \rv< \frac{|X_\pi|}{C},\]
for some positive $C>1$, then there will be at least $\left(1-\frac{1}{C}\right)|X_{\pi}|$  points $\yy \in \l( \pi \cap S^{d-1}\r) $ such that 
$  \lv \lo \mathbf{y}\cdot\mathbf{x} \colon \mathbf{x}\in E \ro \rv$  is at least $N$. 
\begin{construction}\label{fcon}
    
For all $\lambda\in \mathbb{F}_q$, there exists $E\subset \mathbb{F}_q^d$ with $|E|=q$ such that for all $\mathbf{y}\in \mathbb{F}_q^{d-1}\times \{\lambda\}$ one has $|\mathbf{y}\cdot E|\le \frac{q+1}{2}$. Indeed, if $\lambda=0$, we can set $E=\{0\}^{d-1}\times \mathbb{F}_q$. If $\lambda \ne 0$, we can set $E=\{0\}^{d-2}\times \{(t, t^2)\colon t\in \mathbb{F}_q\}$. Using the fact that the number of square elements is $\frac{q+1}{2}$, we derive $|\mathbf{y}\cdot E|\le \frac{q+1}{2}$.
This implies that
\[ \Tca_1 \l( X_\pi , E , \frac{q+1}{2} \r) =X_\pi.\]
\end{construction}

\begin{construction}\label{cons1.5}
    Let $E= E' \times \Fbb_q \subset \Fbb_q^d$, where $ E ' = \lo \xx_1 ,\xx_2 ,\ldots ,\xx_N\ro\subset \mathbb{F}_q^{d-1}$ with $0< N < \frac{q}{2}$. 
    For $\lambda =0$, for all $V\in X_\pi$ we have $ (0, \cdots ,0,1) \in V^\perp$, so $A:=\lo (0,\cdots ,0 ,t) \colon t \in \Fbb_q \ro \subset V^\perp$. Therefore,
     \[  \l( \{ \xx_i\} \times \Fbb_q \r) = \l( \l( \xx_i ,0 \r) + A \r)  \subset  \l( \l(\xx_i,0 \r)+ V^\perp \r) , \quad \forall \, i=1,2,\ldots ,N. \]
This implies that $E$ can be covered by at most $N$ translates of $V^\perp$. 
    Then, $\lv \pi_V (E) \rv \le N$, so
    \[ \lv  \Tca_1 \l(  X_\pi ,E ,N \r) \rv = \lv X_\pi \rv \sim q^{d-2}. \]
    On the other hand, $\max_{\xx\in \Fbb_q^{d-1}} f(\xx) = f(\xx_1) = f(\xx_2) = \cdots = f(\xx_{N})=q$, so from Theorem \ref{thm1.55}, one has
    \[  \lv X_\pi \rv = \lv \Tca_1 \l( X_\pi ,E,N \r) \rv \le \frac{2q^{d-1}N}{(q-N)|E|} \cdot \max_{\xx\in \Fbb_q^{d-1}} f(\xx) = \frac{2q^{d-1}N}{(q-N)qN}\cdot q \le 4q^{d-2} \sim \lv X_\pi \rv .\]
\end{construction}
\begin{construction}\label{16c}
 Let $E = \Fbb_q^{d-1}\times I $, where $I\subset \mathbb{F}_q$, $ |I| =\left \lfloor \frac{|E|}{q^{d-1}} \right \rfloor $,   and $0 \in I$.
    For $\lambda \ne 0$, we have $V \notin (\mathbb{F}_q^{d-1}\times \{0\})=(0,\cdots ,0,1)^\perp$ for all $V \in X_\pi $ . 
     Therefore,  $ V \cap \l( \Fbb_q^{d-1} \times \{ 0 \} \r)  =\{ \oo \}$. This implies any translate of $V$ intersects $\mathbb{F}_q^{d-1}\times \{0\}$ at one point. Hence, for any $\xx \in \Fbb_q^d$, there exists $\xx' \in \Fbb_q^{d-1}\times \{ 0 \}$ such that $(\xx +V) = (\xx ' +V)$, so $(\xx + V) \cap \l( \Fbb_q^{d-1}\times \{ 0 \} \r) = \xx'$. Since $\l( \Fbb_q^{d-1}\times \{ 0\} \r) \subset E$, $ (\xx +V) \cap E \ne \emptyset$ for all $\xx \in \Fbb_q^d$ and all $V \in X_\pi$. That means
\begin{align*}
    \# \lo \mathbf{x}+V \colon (\mathbf{x}+V) \cap  E \ne 0 \ro = q^{d-1} , \quad \text{ for all } V \in X_\pi.
\end{align*}
Thus, $\Tca_2 (X_\pi ,E ,N)=X_\pi$ for all $N\ge q^{d-1}$.
\end{construction}

\begin{construction}\label{c16.2}

   Let $E = \Fbb_q^{d-1}\times I $ where  $ |I| = \left \lfloor \frac{|E|}{q^{d-1}} \right \rfloor $  and $0 \notin I$. For $\lambda =0$, for all $V \in X_\pi$, we have $V \in (0,\cdots ,0 ,1)^\perp = \Fbb_q^{d-1}\times \{ 0 \}$, so $(\xx + V) \cap \l( \Fbb_q^{d-1}\times \{ 0\} \r) = (\xx +V)$  if $ \xx \in \Fbb_q^{d-1} \times \{ 0 \}$ and $\emptyset$ if $\xx \notin \Fbb_q^{d-1}\times \{ 0 \}$. Therefore, $\Fbb_q^{d-1}\times \{ 0\}$ is covered by $\frac{\lv \Fbb_q^{d-1} \times \{ 0 \}\rv}{|V|} = \frac{q^{d-1}}{q} = q^{d-2}$ translations of $V$.
   Hence, by similar argument, for any $t\in \Fbb_q$, $\Fbb_q^{d-1}\times \{ t\}$ is covered by $q^{d-2}$ lines of form $\xx+V$. Note that $E$ is covered by $|I|$ hyperplanes of form $\Fbb_q^{d-1}\times \{ t\} $, so we get
\[ \# \lo \mathbf{x} +V\colon (\mathbf{x}+V)\cap E \ne 0 \ro = q^{d-2}\cdot |I| = \frac{|E|}{q} , \quad \, V \in X_\pi . \]
Then,   
\[ \lv \Tca_2 (X_\pi ,E ,N)\rv = \lv X_\pi \rv , \quad \text{ for all } \,  \frac{|E|}{q}\le N \le q^{d-1}. \] 
From this and note that $\lv X_\pi\rv \sim q^{d-2}$ (by Lemma \ref{Lem1.6} below), one has
\[ \lv \Tca_2 (X_\pi ,E ,N)\rv = \lv X_\pi \rv \sim q^{d-2} \sim \frac{Nq^{d-1}}{|E|}, \]
when $N \sim \frac{|E|}{q}.$
\end{construction}
\section{Open questions}
Motivated by the local-to-global principle, we formulate the following questions.

\paragraph{Distribution of distances:}
Given a set $E$ in $\mathbb{F}_q^d$ and $t\in \mathbb{F}_q^*$. Iosevich and Rudnev \cite{IR} proved that if $t\ne 0$, then the number of pairs $(\mathbf{x}, \mathbf{y})\in E\times E$ such that $||\mathbf{x}-\mathbf{y}||=t$ is at most $\frac{|E|^2}{q}+4q^{\frac{d-1}{2}}|E|$. Notice that $||\mathbf{x}-\mathbf{y}||=t$ is equivalent to $\mathbf{x}-\mathbf{y}\in S^{d-1}$. Thus, from the cover $S^{d-1}=\bigcup_{\lambda \in \mathbb{F}_q} (S^{d-1}\cap \{x_d=\lambda\})$. It would be interesting to look at this question from the local (slide-wise) perspective.

\paragraph{Restricted families of projections with Salem sets:} The authors of \cite{F.R.25} extended Chen's theorem to the setting of $(u, s)$-Salem sets, thus it is natural to ask for analogs of Theorems \ref{p11}--\ref{thm1.7} in this new setting. Note that the $(u, s)$-Salem definition is quite general, and for any set $E$, we can find appropriate $u$ and $s$ such that it is $(u, s)$-Salem. More discussions on this framework can be found in recent papers 
\cite{CKPTZ25, fraser2, JF1, fraser22, fraser33}.
\section{Acknowledgments}
Thang Pham and Doowon Koh gratefully acknowledge the hospitality of the Alfréd Rényi Institute of Mathematics during the summer of 2024, when part of this work was completed. Thang Pham and Do Trong Hoang also thank the Vietnam Institute for Advanced Study in Mathematics (VIASM) for its hospitality, where some initial proofs were developed.

\vspace{1cm}

Vietnam Institute of Educational Sciences. 

Email: hamlq2022@gmail.com

\vspace{0.5cm}
Faculty of Mathematics and Informatics, Hanoi University of Science and Technology.

Email: hoang.dotrong@hust.edu.vn

\vspace{0.5cm}
Faculty of Mathematics and Informatics, Hanoi University of Science and Technology. 

Email: hung.lequang@hust.edu.vn

\vspace{0.5cm}
Chungbuk National University, Department of Mathematics, Chungdae-ro 1, Seowon-Gu, Cheongju City, Chungbuk 28644, Korea

Email: koh131@chungbuk.ac.kr

\vspace{0.5cm}
Institute of Mathematics and Interdisciplinary Sciences at Xidian University, China. 

Email: thangpham.math@gmail.com

\end{document}